\newcommand{\myenumlabel}[1]{\textnormal{(\roman{#1})}}
\def\indsym#1#2{%
  \setbox0=\hbox{$\m@th#1x$}%
  \kern\wd0%
  \hbox to 0pt{\hss$\m@th#1\mid$\hbox to 0pt{$\m@th#1^{#2}$\hss}\hss}%
  \lower.9\ht0\hbox to 0pt{\hss$\m@th#1\smile$\hss}%
  \kern\wd0}
\newcommand{\ind}[1][]{\mathop{\mathpalette\indsym{#1}}}
\newcommand{\Mathbf}{\mathbf}
\numberwithin{equation}{section}
\newcommand{\myiffrench}[2]{#2}
\newcommand{\mynewthm}[3][]{%
  \newaliascnt{#2}{thmnum}%
  \newtheorem{#2}[#2]{#3}%
  \aliascntresetthe{#2}%
  \newtheorem*{#2*}{#3}%
  \expandafter\newcommand\csname #2autorefname\endcsname{#3}%
  \expandafter\renewcommand\csname the#2\endcsname{\thethmnum}%
}
\theoremstyle{plain}
\theoremstyle{definition}
\theoremstyle{remark}
\def\equationautorefname~#1\null{(#1)}
\def\itemautorefname~#1\null{#1}
\newcommand{\N}{{\Mathbf N}}
\newcommand{\R}{{\Mathbf R}}
\newcommand{\Q}{{\Mathbf Q}}
\newcommand{\bC}{{\mathbf C}}
\newcommand{\bL}{{\mathbf L}}
\newcommand{\bM}{{\mathbf M}}
\newcommand{\bN}{{\mathbf N}}
\newcommand{\bR}{{\mathbf R}}
\newcommand{\bU}{{\mathbf U}}
\newcommand{\mcA}{\mathcal{A}}
\newcommand{\mcB}{\mathcal{B}}
\newcommand{\mcD}{\mathcal{D}}
\newcommand{\mcH}{\mathcal{H}}
\newcommand{\mcK}{\mathcal{K}}
\newcommand{\mcQ}{\mathcal{Q}}
\newcommand{\eps}{\epsilon}
\newcommand{\eq}{^{\mathrm{eq}}}
\newcommand{\sub}{\subseteq}
\newcommand{\sdiff}{\triangle}
\newcommand{\set}[1]{\{#1\}}
\newcommand{\ip}[1]{\langle #1 \rangle}
\newcommand{\nm}[1]{\| #1 \|}
\newcommand{\cl}[2][]{\overline{#2}^{#1}}
\newcommand{\actson}{\curvearrowright}
\newcommand{\ud}{\,\mathrm{d}}
\newcommand{\MALG}{\mathrm{MALG}}
\DeclareMathOperator{\diam}{diam}
\DeclareMathOperator{\id}{id}
\DeclareMathOperator{\tp}{tp}
\DeclareMathOperator{\tS}{S}
\DeclareMathOperator{\acl}{acl}
\DeclareMathOperator{\dcl}{dcl}
\DeclareMathOperator{\clco}{\overline{co}}
\DeclareMathOperator{\Aut}{Aut}
\DeclareMathOperator{\Homeo}{Homeo}
\DeclareMathOperator{\Iso}{Iso}
\DeclareMathOperator{\Th}{Th}
\DeclareMathOperator{\RUCB}{RUCB}
\DeclareMathOperator{\WAP}{WAP}
\newcommand{\df}{\emph}
\renewcommand{\And}{\text{ and }}
\newcommand{\dslash}{\mathord\sslash}
\newcounter{cycprfcnt}
\newcounter{cycprffirst}
\newcommand{\cycprfpreamble}%
{%
  \setcounter{cycprfcnt}{1}
  \setcounter{cycprffirst}{0}
  \setlength{\itemindent}{0.5\leftmargin}%
  \setlength{\leftmargin}{0pt}%
  \newcommand{\cpcurr}{\myenumlabel{cycprfcnt}}%
  \newcommand{\cpnext}{\addtocounter{cycprfcnt}{1}\cpcurr}%
  \newcommand{\cpnum}[1]{\setcounter{cycprfcnt}{##1}\cpcurr}%
  \newcommand{\cpfirst}{\cpnum{1}}%
  \newcommand{\impnext}{\cpcurr{} $\Longrightarrow$ \cpnext.}%
  \newcommand{\impfirst}{\cpcurr{} $\Longrightarrow$ \cpfirst.}%
  \def\makelabel##1{\ifnum\value{cycprffirst}=0\hspace{-0.7\itemindent}\setcounter{cycprffirst}{1}\fi##1}%
}%
\newenvironment{cycprf}%
{\begin{list}{\impnext}%
  {\cycprfpreamble}}%
{\qedhere\end{list}}%
\newenvironment{cycprf*}%
{\begin{list}{\impnext}%
  {\cycprfpreamble}}%
{\end{list}}%
\title[WAP functions and stability]{Weakly almost periodic functions, model-theoretic stability, and minimality of topological groups}
\author{Ita\"\i{} Ben Yaacov}
\address{Universit\'e Claude Bernard -- Lyon 1 \\
  Institut Camille Jordan, CNRS UMR 5208 \\
  43 boulevard du 11 novembre 1918 \\
  69622 Villeurbanne \textsc{cedex} \\
  France}
\urladdr{\url{http://math.univ-lyon1.fr/~begnac/}}
\author{Todor Tsankov}
\address{Institut de Math{\'e}matiques de Jussieu--PRG \\
  Universit\'e Paris 7, case 7012 \\
  75205 Paris \textsc{cedex} 13 \\
  France}
\urladdr{\url{http://www.math.jussieu.fr/~todor/}}
\thanks{Research supported by the Institut Universitaire de France and ANR contract GrupoLoco (ANR-11-JS01-008).}
\subjclass[2010]{Primary 22F50, 22A15, 22A05; Secondary 03C45}
\keywords{WAP, stability, $\aleph_0$-categorical, group compactifications, Roelcke precompact, minimal groups, reflexively representable}
\begin{document}

\begin{abstract}
  We investigate the automorphism groups of $\aleph_0$-categorical structures and prove that they are exactly the Roelcke precompact Polish groups. We show that the theory of a structure is stable if and only if every Roelcke uniformly continuous function on the automorphism group is weakly almost periodic. Analysing the semigroup structure on the weakly almost periodic compactification, we show that continuous surjective homomorphisms from automorphism groups of stable $\aleph_0$-categorical structures to Hausdorff topological groups are open. We also produce some new WAP-trivial groups and calculate the WAP compactification in a number of examples.
\end{abstract}

\maketitle

\tableofcontents

\section{Introduction}
The main object of study in this paper are the automorphism groups of $\aleph_0$-categorical structures.
We recall that a structure is \emph{$\aleph_0$-categorical} if it is the unique countable (or separable, for metric structures) model of its first order theory.
It has been known for a while that there is a narrow correspondence between the model theory of an $\aleph_0$-categorical structure and the action of the automorphism group.
A classical theorem of Ryll-Nardzewski, Engeler, and Svenonius affirms that a countable (discrete) structure $\bM$ is $\aleph_0$-categorical if and only if the action $\Aut(\bM) \actson \bM$ is \emph{oligomorphic}, i.e., the diagonal action $\Aut(\bM) \actson \bM^n$ has only finitely many orbits for each $n$.
If that is the case, one can recover all model-theoretic information about $\bM$ from those actions.
This correspondence has created a new field at the interface of model theory, permutation group theory, combinatorics, and, more recently, computer science.
We invite the reader to consult the recent survey of Macpherson~\cite{Macpherson2011a} and the references therein for more information on this subject.

More recently, the correspondence described above has been generalized to the setting of \emph{continuous logic} and a theorem analogous to the Ryll-Nardzewski theorem, due to Henson, has been proved in this setting \cite{BenYaacov2007a}. Continuous $\aleph_0$-categorical structures include familiar examples from analysis such as the separable, infinite-dimensional Hilbert space, the measure algebra of a standard probability space, and separable atomless $L^p$ Banach lattices ($p < \infty$). Discrete structures, which we will call \emph{classical}, can also be considered as special cases of continuous structures.

The main examples of $\aleph_0$-categorical structures in both the classical and the continuous setting are the \emph{homogeneous structures}, i.e., structures $\bM$ for which all isomorphisms between finitely generated pieces of $\bM$ extend to full automorphisms of $\bM$. A homogeneous structure is $\aleph_0$-categorical if and only if the set of isomorphism classes of its $n$-generated substructures is finite (respectively, compact in a suitable topology). Familiar discrete examples include the countable dense linear order, the random graph, and the countable atomless Boolean algebra.

The automorphism groups of such structures are naturally endowed with the topology of pointwise convergence on the structure, which makes them Polish groups.
If $\bM$ is classical, its automorphism group is a permutation group, i.e., a closed subgroup of $S_\infty$, the group of all permutations of a countable discrete set.
Our first result, Theorem~\ref{thm:RoelckePreCompactGroup}, is a characterization of the Polish groups that occur as automorphism groups of $\aleph_0$-categorical structures: they are exactly the \emph{Roelcke precompact} Polish groups.
This generalizes a similar result about automorphism groups of classical structures from \cite{Tsankov2012}.

\begin{dfn}
  \label{dfn:RoelckePrecompact}
  A topological group $G$ is called \emph{Roelcke precompact} if for every neighbourhood $U \ni 1_G$, there exists a finite set $F \sub G$ such that $UFU = G$.
\end{dfn}

The notion of Roelcke precompactness was introduced by Roelcke and Dierolf~\cite{Roelcke1981} and later found a number of applications in the theory of topological groups, most notably through the work of Uspenskij~\cites{Uspenskii1998,Uspenskii2001,Uspenskii2008} and Megrelishvili~\cite{Megrelishvili2001}.

Because of the correspondence we mentioned earlier between $\aleph_0$-categorical structures and their automorphism groups, it is reasonable to expect that model-theoretic properties of $\bM$ will have natural counterparts as topological-group-theoretic properties of $\Aut(\bM)$ and vice versa. In fact, this correspondence can be made precise: by a theorem of Ahlbrandt and Ziegler~\cite{Ahlbrandt1986}, two classical $\aleph_0$-categorical structures have isomorphic automorphism groups if and only if they are bi-interpretable; therefore any property of $\bM$ invariant under bi-interpretability \emph{is} a property of the group $\Aut(\bM)$. The work on this paper started as an attempt to understand what corresponds on the group side to the one of the most important concepts studied in model theory, namely, \emph{stability}. It turns out that the absence of the order property, which characterizes stable formulas, can be written as invariance under exchanging limits, a condition that had appeared in the work of Grothendieck in the 1950s and that is equivalent to the weak compactness of a certain set of continuous functions. Using Grothendieck's result, we obtain the following (cf. Theorem~\ref{th:stable-equiv-r-equal-w}).
\begin{thm}
  \label{th:I:stability}
  Let $G$ be the automorphism group of an $\aleph_0$-categorical structure $\bM$. Then the following are equivalent:
  \begin{enumerate}
  \item $\Th(\bM)$ is stable;
  \item Every Roelcke uniformly continuous function on $G$ is weakly almost periodic.
  \end{enumerate}
\end{thm}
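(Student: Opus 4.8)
The plan is to route both conditions through Grothendieck's double-limit criterion and match them through the model theory of $\bM$. Recall that a bounded continuous function $f$ on $G$ is weakly almost periodic if and only if for all sequences $(g_n)$ and $(h_m)$ in $G$ one has $\lim_n \lim_m f(g_n h_m) = \lim_m \lim_n f(g_n h_m)$ whenever both iterated limits exist; and that, in continuous logic, a definable predicate $\varphi(x,y)$ is stable exactly when $\lim_i \lim_j \varphi(a_i,b_j) = \lim_j \lim_i \varphi(a_i,b_j)$ for all sequences of tuples $(a_i)$, $(b_j)$ for which the two sides exist, with $\Th(\bM)$ stable iff every such $\varphi$ is. Thus the theorem should reduce to matching these two exchange-of-limits properties across a dictionary between functions on $G$ and predicates on $\bM$.

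The bridge is the following correspondence. For finite tuples $a,b$ from $\bM$ and a definable predicate $\varphi(x,y)$, set $f_{\varphi,a,b}(g) = \varphi(a, g b)$. Since $\varphi$ is $\Aut(\bM)$-invariant, $f_{\varphi,a,b}$ is constant on the double cosets $V_a \backslash G / V_b$, where $V_a,V_b$ are the (open) pointwise stabilizers of $a,b$; as $G$ is Roelcke precompact these are finite in number, so $f_{\varphi,a,b}$ is Roelcke uniformly continuous. Conversely, every Roelcke uniformly continuous function is a uniform limit of functions constant on double cosets $V_A \backslash G / V_A$, and by the Ryll-Nardzewski--Henson theorem such a function on the (finite) double-coset space is encoded by a definable predicate; hence the $f_{\varphi,a,b}$ are uniformly dense among the Roelcke uniformly continuous functions. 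The key computation is that, writing $a_n = g_n^{-1} a$ and $b_m = h_m b$, invariance of $\varphi$ gives $f_{\varphi,a,b}(g_n h_m) = \varphi(g_n^{-1} a, h_m b) = \varphi(a_n, b_m)$, so the WAP double limits of $f_{\varphi,a,b}$ are \emph{literally} the stability double limits of $\varphi$ along $(a_n)$ and $(b_m)$.

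From here both implications follow. For (i)$\Rightarrow$(ii): if $\Th(\bM)$ is stable then every $\varphi$ satisfies the exchange, hence every $f_{\varphi,a,b}$ is WAP by the computation above; since the WAP functions form a uniformly closed subspace and the $f_{\varphi,a,b}$ are dense, every Roelcke uniformly continuous function is WAP. For (ii)$\Rightarrow$(i) I argue contrapositively: if some $\varphi$ is unstable, fix sequences of tuples in $\bM$ witnessing the failure of exchange (these can be found in $\bM$ because it is $\omega$-saturated), and, using that $\bM$ is $\aleph_0$-categorical, hence approximately homogeneous with only finitely many types in each arity, pass to subsequences all of whose $a_i$ lie in one orbit and all of whose $b_j$ lie in another; fixing representatives $a,b$ and choosing $g_i,h_j \in G$ with $g_i^{-1} a \approx a_i$ and $h_j b \approx b_j$, the function $f_{\varphi,a,b}$ then violates the double-limit criterion and is a Roelcke uniformly continuous function that is not WAP.

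The main obstacle is the correspondence of the middle paragraph, and within it the reverse direction feeding (ii)$\Rightarrow$(i). Making ``every Roelcke uniformly continuous function is a uniform limit of the $f_{\varphi,a,b}$'' precise requires identifying the Roelcke uniformity of $G$ with the double-coset structure and invoking Ryll-Nardzewski--Henson to read the finite double-coset data as a definable predicate. More delicate is realizing abstract instability witnesses \emph{inside} $\bM$ and then rendering them orbit-homogeneous: the unstable configuration a priori lives in a monster model, so one must use $\omega$-saturation to pull countable witnessing sequences into $\bM$, a Ramsey-type extraction (available since each arity carries only finitely many types) to force all the $a_i$, respectively all the $b_j$, into single orbits \emph{while preserving the strict inequality of the two iterated limits}, and approximate homogeneity to convert ``same type'' into ``same orbit up to a small error'', the latter being absorbed by the uniform continuity of $\varphi$.
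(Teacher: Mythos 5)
Your strategy coincides with the paper's own proof (Lemma~\ref{lem:StableWAP}, Theorem~\ref{thm:WG} and Theorem~\ref{th:stable-equiv-r-equal-w}): both conditions are run through Grothendieck's criterion (Theorem~\ref{th:GrothendieckGroups}), your bridge functions $f_{\varphi,a,b}(g)=\varphi(a,g\cdot b)$ are exactly the functions $\tilde\varphi_{a,b}$ of Lemma~\ref{lem:StableWAP}, the key identity $f_{\varphi,a,b}(g_nh_m)=\varphi(g_n^{-1}a,h_mb)$ is the same computation, and your converse uses the same extraction-by-compactness as the proof of that lemma. For \emph{classical} structures the argument you give is complete and correct.

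The gap is that the theorem also covers metric $\aleph_0$-categorical structures (the Hilbert space, the measure algebra, $L^p$ lattices), and the justifications in your bridge paragraph are classical-only: for a metric structure the pointwise stabilizers $V_a$ are closed but \emph{not open}, the double-coset spaces $V_a\backslash G/V_b$ are infinite, and there are infinitely many $n$-types, so the claims ``constant on finitely many double cosets, hence Roelcke uniformly continuous'', ``every Roelcke uniformly continuous function is a uniform limit of functions constant on (finite) double-coset spaces, encoded by definable predicates'', and the ``Ramsey-type extraction, available since each arity carries only finitely many types'' all fail as written. Each conclusion survives, but by different arguments, which are essentially what the paper uses: Roelcke uniform continuity of $f_{\varphi,a,b}$ follows directly from invariance together with the uniform continuity of $\varphi$ (estimate $|f_{\varphi,a,b}(ugv)-f_{\varphi,a,b}(g)|$ by the modulus of continuity applied to $d(u^{-1}a,a)$ and $d(vb,b)$); density of your family requires identifying the Roelcke uniformly continuous functions with $C(R(G))$, where $R(G)=\Xi^2\sslash G$ as in Lemma~\ref{lem:DenseTuple}, and then applying Stone--Weierstrass to the subalgebra of functions depending on finitely many coordinates of each copy of $\Xi$ (this separates points because types of infinite tuples are determined by their finite subtypes); and the pigeonhole extraction must be replaced by compactness of the type spaces $X\sslash G$, $Y\sslash G$, passing to subsequences with $[a_i]\to[a]$, $[b_j]\to[b]$ and absorbing errors via uniform continuity of $\varphi$ --- a fix your final paragraph does anticipate, but only for the extraction step, not for the density step. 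Note that the paper sidesteps the density argument entirely by working with an infinite tuple $\xi$ enumerating a dense subset of $M$: Roelcke uniformly continuous functions then \emph{are} formulas on $\Xi^2$, and stability of the theory reduces to stability of such formulas via $\overline\varphi_{x,y}(x',y')=\varphi(x'\cdot x,\,y'\cdot y)$.
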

The Gelfand space of the C$^*$-algebra of weakly almost periodic functions on $G$ is the so-called WAP compactification of $G$ (denoted by $W(G)$), which carries the additional structure of a semitopological semigroup. It is possible to define the semigroup structure purely model-theoretically using the notion of stable independence. We carry this out in Section~\ref{sec:model-theor-viewp}.

It is sometimes possible to define a semigroup structure on the Roelcke compactification of $\Aut(\bM)$ even if the structure $\bM$ is not stable using notions of independence that do not come from stability (see \cite{Uspenskii2008}). However, in those cases, the semigroup is never semitopological and we do not consider such semigroups in this paper.

A property that has been studied quite extensively in topological group theory is that of minimality: a topological group $G$ is called \emph{minimal} if every bijective continuous homomorphism from $G$ to another Hausdorff topological group is a homeomorphism; $G$ is \emph{totally minimal} if every continuous surjective homomorphism to a Hausdorff topological group is open. We refer the reader to the recent survey by Dikranjan and Megrelishvili~\cite{Dikranjan2013} for more information about this concept. One of the main theorems in this paper is the following (cf. Theorem~\ref{th:min-topology}).
\begin{thm}
  \label{th:I:totally-minimal}
  Let $G$ be the automorphism group of an $\aleph_0$-categorical, stable structure, or equivalently, let $G$ be a Roelcke precompact Polish group such that every Roelcke uniformly continuous function on $G$ is weakly almost periodic. Then $G$ is totally minimal.
\end{thm}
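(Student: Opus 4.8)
The plan is to prove the equivalent assertion that every continuous surjective homomorphism $\pi \colon G \to H$ onto a Hausdorff topological group is open, and to organize this around a rigidity statement for coarser group topologies. First I would record that the relevant class---Roelcke precompact Polish groups for which every $\RUCB$ function is $\WAP$---is stable under the operations I need. Roelcke precompactness passes to continuous homomorphic images and, crucially here, to any coarser Hausdorff group topology: a basic neighbourhood $U$ for the coarser topology is still a neighbourhood for the original one, and the same finite $F$ with $UFU = G$ works. The equality $\WAP = \RUCB$ also passes to coarser topologies and to Hausdorff quotients, because whether a bounded function is weakly almost periodic is governed by Grothendieck's double-limit criterion, which refers only to the function and the group multiplication, not to the topology; so a function that is $\RUCB$ for a coarser topology is $\RUCB$, hence $\WAP$, for the finer one (by Theorem~\ref{th:I:stability}), and satisfies the criterion, whence it is $\WAP$ for the coarser topology as well; the analogous lifting/descent argument handles quotients. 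Since total minimality of $G$ means that $G/N$ is minimal for every closed normal $N$, and since each such quotient is again Polish and lies in the class, it suffices to prove the bare statement: \emph{a member of the class admits no strictly coarser Hausdorff group topology}.

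Next I would set up a semigroup-theoretic recovery of the group topology. By the discussion preceding the theorem and the analysis of Section~\ref{sec:model-theor-viewp}, under $\WAP = \RUCB$ the Roelcke compactification $R(G)$ coincides with the $\WAP$ compactification $W(G)$ and carries the structure of a compact involutive semitopological semigroup, in which $G$ sits as a dense subgroup; write $e$ for the image of $1_G$. The essential point I would extract from that section is that the group topology of $G$ is encoded by the idempotents of $W(G)$: the sets $V_a = \{g \in G : ga = ag = a\}$, as $a$ ranges over the self-adjoint idempotents, form a neighbourhood basis at $1_G$ (in the continuous setting an approximate version, using finitely many idempotents and a tolerance). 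This is the uniform mechanism behind the two guiding examples: for a closed subgroup of $S_\infty$ the idempotents are the partial identities on finite sets and the $V_a$ are the pointwise stabilizers, while for the unitary group $U(H)$ the idempotents are the orthogonal projections and the $V_a$ attached to finite-rank projections are exactly the basic strong-operator neighbourhoods.

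With this in hand the comparison of topologies proceeds as follows. Let $\tau' \subseteq \tau$ be a coarser Hausdorff group topology; both topologies lie in the class, so each has its own semitopological $\WAP$ compactification, and functoriality of the construction applied to $\mathrm{id} \colon (G,\tau) \to (G,\tau')$ yields a continuous surjective homomorphism of involutive semigroups $p \colon W(G,\tau) \to W(G,\tau')$ that is the identity on the dense subgroup $G$. By the recovery principle it is enough to show that every basic $\tau$-neighbourhood $V_a$ contains a $\tau'$-neighbourhood, that is, that $p$ does not destroy the idempotent structure carrying the fine topology. The strategy is, given an idempotent $a \in W(G,\tau)$, to produce an idempotent $b \in W(G,\tau')$---furnished by the Ellis--Namioka theory, which applies precisely because multiplication is separately continuous---whose $\tau'$-stabilizer $V_b$ is contained in $V_a$, and then to check that $V_b$ is a genuine $\tau'$-neighbourhood.

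The hard part will be this last lifting step: controlling the fibres of $p$ well enough that passing to the coarser topology cannot merge the relevant idempotents, so that $\tau'$ still sees enough of them to regenerate $V_a$. Concretely, for $h$ in the $\tau'$-stabilizer of $p(a)$ one must solve, within the fibre of $p$ (equivalently, modulo $\ker\pi$ in the openness formulation), for a representative lying in $V_a$, which amounts to a continuity-and-solvability property of the partially defined inversion on the semigroup. This is exactly where both hypotheses become indispensable: Roelcke precompactness makes the ambient spaces compact so that limits of approximate solutions exist (and provides the dense $G_\delta$ of joint continuity via Namioka), while separate continuity of the multiplication---the manifestation of stability through Grothendieck's criterion---is what forces these limits to respect the semigroup operation and hence to land in the correct stabilizer. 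I expect essentially all of the genuine difficulty, and the only delicate estimates, to be concentrated in this solvability statement; the reductions of the first two paragraphs and the final assembly of $\tau = \tau'$ should then be formal.
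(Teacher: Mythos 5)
Your proposal is not a proof: the two pillars it rests on are, respectively, unjustified (and false as stated) and explicitly deferred.

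First, the ``recovery principle''. The family $V_a=\set{g\in G: ga=ag=a}$, as $a$ ranges over the self-adjoint idempotents of $W(G)$, does \emph{not} form a neighbourhood basis at $1_G$, even in your two guiding examples. In $U(\mcH)$, the stabilizer of a non-zero finite-rank projection $P$ is $\set{u : u\xi=\xi \text{ for all } \xi\in\ran P}$, a proper closed subgroup with \emph{empty interior} in the strong operator topology --- it is not a basic neighbourhood, contrary to what you write. In $S_\infty$, the idempotents of $W(S_\infty)$ are the partial identities $\id_A$ for \emph{arbitrary} $A\subseteq M$, and for infinite $A$ (e.g.\ $A=M$, giving $V_a=\set{1_G}$) the stabilizer is not open. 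So at best one can hope that \emph{some} of the $V_a$, or approximate versions of them, form a basis; this is never formulated precisely in your outline, is nowhere asserted in Section~\ref{sec:model-theor-viewp}, and in the classical stable case it amounts to a nontrivial claim about pointwise stabilizers of algebraically closed subsets of $M\eq$ that the paper neither needs nor proves.

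Second, and more seriously, the step you yourself call ``the hard part'' --- ruling out that the homomorphism $p\colon W(G,\tau)\to W(G,\tau')$ merges an idempotent $a$ with elements outside its stabilizer --- is the entire mathematical content of the theorem, and you give no argument for it. Note that $b=p(a)$ is automatically a self-adjoint idempotent with $V_b\supseteq V_a$; everything hinges on the reverse inclusion, and exact non-merging is actually \emph{false} in general: by Proposition~\ref{p:corr-quot}, quotients differing by a compact normal subgroup induce the same idempotent, so any correct argument must account for compact kernels. The paper concentrates precisely this difficulty in Proposition~\ref{p:open-map}: for a \emph{central} idempotent $e$ and $K=\set{p : pe=e}$, the map $G/(K\cap G)\to H(e)$ is an isomorphism of topological groups, proved via the density of $G\cap K$ in $K$ (Lemma~\ref{l:K-dense-subgroup}), which in turn rests on a Baire category argument combined with the amalgamation Lemma~\ref{lem:ComplexAmalgamation} and Lemma~\ref{l:central-idemp}; total minimality then follows by factoring an arbitrary Polish quotient as $G\to H(e)\to G'$ (Propositions~\ref{p:minimal-Polish} and \ref{p:corr-quot}). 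No counterpart of these ideas appears in your outline. Two further points: the restriction to \emph{central} idempotents is what makes the stabilizers normal and the quotient correspondence work, whereas your scheme with arbitrary idempotents would also have to handle non-normal stabilizers; and a coarser Hausdorff group topology need not be Polish, an issue the paper resolves by passing to the two-sided completion in Proposition~\ref{p:minimal-Polish} and which your reduction silently ignores.
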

There are a number of special cases of Theorem~\ref{th:I:totally-minimal} that were known before: for example, the infinite permutation group (Gaughan~\cite{Gaughan1967}), the unitary group (Stoyanov~\cite{Stojanov1984}; see also \cite{Uspenskii1998} for a different proof), and the automorphism group of a standard probability space (Glasner~\cite{Glasner2012a}). Some new examples for which minimality was not known before include automorphism group of $L^p$ lattices \cite{BenYaacov2011b} (or, which is the same, the group of measure-class-preserving automorphisms of a standard probability space), the automorphism groups of countably dimensional vector spaces over a finite field, and classical, $\aleph_0$-categorical, stable, non-$\aleph_0$-stable examples obtained via the Hrushovski construction (Wagner~\cite{Wagner1994}*{Example~5.3}).

Our proof is based on analysing the central idempotents in $W(G)$, an idea that goes back to Ruppert~\cite{Ruppert1990} and was first used in a setting similar to ours by Uspenskij~\cite{Uspenskii1998}.

We would like to emphasize that even though the proof of Theorem~\ref{th:I:totally-minimal} that we have included in this paper does not formally rely on model theory, the proof of the key Lemma~\ref{l:K-dense-subgroup} is a translation of a model-theoretic argument.

The weakly almost periodic functions on a group $G$ are exactly the matrix coefficients that arise from isometric representations of $G$ on reflexive Banach spaces, so, in a certain sense, understanding $W(G)$ amounts to understanding those representations. In particular, the triviality of $W(G)$ corresponds to the absence of non-trivial such representations. The first example of a group with a trivial WAP compactification, the group of orientation-preserving homeomorphisms of the interval $\Homeo^+([0, 1])$, was found by Megrelishvili~\cite{Megrelishvili2001}. A more detailed analysis of the WAP compactification of Roelcke precompact subgroups of $S_\infty$ leads to a new method for proving WAP triviality that applies to Megrelishvili's example but also yields a new one (see Corollary~\ref{c:wap-trivial}).
\begin{thm}
  \label{th:I:wap-trivial}
  Let $H$ be a Roelcke precompact subgroup of $S_\infty$ and let $\pi \colon H \to G$ be a homomorphism to another Polish group with a dense image. Suppose, moreover, that $G$ has no proper open subgroups. Then $G$ admits no non-trivial representations by isometries on a reflexive Banach space.
\end{thm}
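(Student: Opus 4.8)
The plan is to show directly that every continuous isometric representation $\rho \colon G \to \Iso(X)$ on a reflexive Banach space $X$ is trivial, i.e.\ that $X^G = X$; since the weakly almost periodic functions on $G$ are exactly the matrix coefficients of such representations, this gives the stated conclusion. Pulling back along $\pi$, put $\sigma = \rho \circ \pi \colon H \to \Iso(X)$, a strongly continuous isometric representation of $H$. The overall idea is to exploit the non-archimedean structure of $H \le S_\infty$ to manufacture many vectors fixed by open subgroups of $H$, and then to use the absence of proper open subgroups of $G$ to upgrade "fixed by an open subgroup of $H$" to "fixed by all of $G$".

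First I would prove that $\bigcup_V X^V$ is dense in $X$, where $V$ ranges over the open subgroups of $H$ and $X^V$ denotes the $V$-fixed vectors. Given $\xi \in X$ and $\eps > 0$, strong continuity of $\sigma$ together with the fact that pointwise stabilizers of finite tuples form a neighbourhood basis of $1_H$ yields an open subgroup $V$ with $\nm{\sigma(v)\xi - \xi} < \eps$ for all $v \in V$. The set $\clco(\sigma(V)\xi)$ then lies in the closed $\eps$-ball about $\xi$ and, being bounded in the reflexive space $X$, is weakly compact; as $V$ acts on it by affine isometries, the Ryll--Nardzewski fixed point theorem provides a genuinely $V$-fixed point $\xi'$ within distance $\eps$ of $\xi$. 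This is the conceptual heart of the argument, and I expect it to be the main obstacle: reflexivity of $X$ is exactly what makes the orbit hulls weakly compact, and one must be careful that Ryll--Nardzewski really produces an exactly invariant vector rather than an approximately invariant one.

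It therefore suffices to show that each $\xi \in X^V$, with $V \le H$ open, is fixed by $G$. Here Roelcke precompactness of $H$ enters through the finiteness of $V \backslash H / V$: taking double-coset representatives $h_1, \dots, h_n$ and using $\sigma(v)\xi = \xi$, one computes $\set{\sigma(g)\xi : g \in V h_i V} = \sigma(V)\eta_i$ with $\eta_i = \sigma(h_i)\xi$, so the orbit decomposes as a \emph{finite} union $\sigma(H)\xi = \bigcup_{i=1}^n \sigma(V)\eta_i$. Whenever $\eta_i \ne \xi$, the isometry identity $\nm{\sigma(v)\eta_i - \xi} = \nm{\sigma(v)(\eta_i - \xi)} = \nm{\eta_i - \xi} > 0$ holds for all $v \in V$, so the closure of that piece stays on the sphere of radius $\nm{\eta_i - \xi}$ about $\xi$. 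Hence $\xi$ is isolated in norm inside $O := \overline{\sigma(H)\xi}$; and since $\pi(H)$ is dense and $g \mapsto \rho(g)\xi$ is continuous, we also have $\rho(G)\xi \sub O$.

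Finally, isolation of $\xi$ in $O$ means that for all sufficiently small $r > 0$ the set $\set{g \in G : \nm{\rho(g)\xi - \xi} < r}$ coincides with the stabilizer $G_\xi$, which is therefore an \emph{open} subgroup of $G$. As $G$ has no proper open subgroups, $G_\xi = G$, so $\xi \in X^G$. Since $X^G$ is closed and contains the dense set $\bigcup_V X^V$ by the previous steps, we conclude $X^G = X$, as required. Note that the no-open-subgroups hypothesis is invoked only at this last step, while the density step and the orbit-isolation step carry the analytic content.
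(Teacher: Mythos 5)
Your argument is correct, and it takes a genuinely different route from the paper's. The paper proves this statement (Corollary~\ref{c:wap-trivial} in the body) through the structure theory of the WAP compactification: Proposition~\ref{p:approx-wap} (Ryll--Nardzewski applied to orbits of WAP \emph{functions}) yields zero-dimensionality of $W(H)$ (Theorem~\ref{thm:ZeroDimensionalWAP}); combined with the correspondence between WAP quotients and central idempotents (Proposition~\ref{p:corr-quot}), this shows that every WAP quotient of $H$ --- hence every WAP quotient of $G$ --- embeds into $S_\infty$ (Corollary~\ref{c:wap-quot-sinfty}); such a quotient gives a continuous action of $G$ on a countable set, which is trivial because point stabilizers are open; finally, translating ``WAP trivial'' into ``no non-trivial isometric representations on reflexive spaces'' rests on Megrelishvili's characterization of reflexively representable groups \cite{Megrelishvili2001b}. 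You instead work directly with an arbitrary strongly continuous isometric representation on a reflexive space: Ryll--Nardzewski applied to $\clco(\sigma(V)\xi)$, which is weakly compact by reflexivity alone (no Grothendieck criterion or Krein--\v{S}mulian needed), produces a dense set of vectors fixed by open subgroups $V \leq H$; finiteness of $V \backslash H / V$ (Roelcke precompactness plus openness of $V$) makes each such vector norm-isolated in the closure of its $\sigma(H)$-orbit, which contains the $\rho(G)$-orbit by density of $\pi(H)$, so the $G$-stabilizer is open and hence all of $G$. The ingredients run parallel to the paper's (Ryll--Nardzewski, double-coset finiteness, openness of stabilizers forcing triviality), but your organization bypasses the semigroup $W(G)$, its idempotents, and Megrelishvili's theorem entirely, giving a shorter, self-contained proof of exactly this statement; what the paper's longer route buys in exchange is reusable structural information of independent interest --- the classification of \emph{all} WAP quotients of $H$ and the zero-dimensionality of $W(H)$, which feed into Theorem~\ref{thm:WG} and the computations of Section~\ref{sec:exampl-wap-comp}. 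Two minor points you should make explicit: $\pi$ must be assumed continuous (so that $\sigma = \rho \circ \pi$ is strongly continuous), and the isolation step should treat the degenerate case where every $\eta_i = \xi$, i.e., where $\sigma(H)$ already fixes $\xi$, in which case the orbit is a single point and the conclusion is immediate.
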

The above theorem applies to the homeomorphism groups of some one-dimensional continua: for example, the interval and the Lelek fan (see Section~\ref{sec:minimality-topology}).

It is also possible to combine Theorem~\ref{th:I:totally-minimal} with automatic continuity to obtain results about uniqueness of group topologies. For example, using results of Hodges, Hodkinson, Lascar, and Shelah~\cite{Hodges1993a} and Kechris and Rosendal~\cite{Kechris2007a}, we have the following (see the end of Section~\ref{sec:minimality-topology} for a proof).
\begin{cor}
  \label{c:I:unique-topology}
  Let $G$ be the automorphism group of a classical, $\aleph_0$-categorical, $\aleph_0$-stable structure. Then $G$ admits a unique separable Hausdorff group topology.
\end{cor}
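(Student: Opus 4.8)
The plan is to combine the total minimality supplied by Theorem~\ref{th:I:totally-minimal} with an automatic continuity argument. Write $\tau$ for the usual Polish topology of pointwise convergence on $G = \Aut(\bM)$, and let $\tau'$ be an arbitrary separable Hausdorff group topology on $G$; the goal is to show $\tau = \tau'$. Since $\aleph_0$-stability implies stability, Theorem~\ref{th:I:totally-minimal} applies to $(G,\tau)$, so this group is totally minimal, and in particular minimal: every continuous bijective homomorphism from $(G,\tau)$ onto a Hausdorff topological group is a homeomorphism.

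First I would use automatic continuity to show that the abstract-group identity map $\id \colon (G,\tau) \to (G,\tau')$ is continuous. By the theorem of Hodges, Hodkinson, Lascar, and Shelah~\cite{Hodges1993a}, the automorphism group of a classical, $\aleph_0$-categorical, $\aleph_0$-stable structure has ample generics; by Kechris and Rosendal~\cite{Kechris2007a}, a Polish group with ample generics has the automatic continuity property, namely that every homomorphism from it into a separable topological group is continuous. Applying this to the identity homomorphism, with the separable target $(G,\tau')$, yields continuity of $\id \colon (G,\tau) \to (G,\tau')$; equivalently, $\tau' \sub \tau$.

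It then remains to upgrade this inclusion to an equality of topologies, which is exactly where minimality enters. The map $\id \colon (G,\tau) \to (G,\tau')$ is a continuous bijective homomorphism onto the Hausdorff group $(G,\tau')$, so minimality of $(G,\tau)$ forces it to be a homeomorphism; hence $\tau = \tau'$. As $\tau'$ was an arbitrary separable Hausdorff group topology, this establishes the uniqueness.

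The one place calling for care — and really the only obstacle — is verifying that the hypotheses of the cited automatic continuity results are met: one must check that our structure lies in the class for which Hodges, Hodkinson, Lascar, and Shelah produce ample (homogeneous) generics, and that the Kechris–Rosendal deduction of automatic continuity from ample generics applies with $(G,\tau')$ as the separable target. Once these citations are in place the argument is purely formal, with minimality contributing precisely the reverse inclusion $\tau \sub \tau'$ that automatic continuity alone does not provide.
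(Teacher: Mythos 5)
There is a genuine gap, and it sits exactly at the point you flagged as ``the one place calling for care'': the claim that $G = \Aut(\bM)$ itself has ample generics in the sense required by the Kechris--Rosendal automatic continuity theorem is false in general. The notion of ample generics in \cite{Hodges1993a} is not the same as the one in \cite{Kechris2007a} (comeager orbits of the diagonal conjugation action $G \actson G^n$); the HHLS generics live over finite algebraically closed subsets of $M\eq$, and the discrepancy is not cosmetic. Concretely, let $\bM$ be a countable set with an equivalence relation having exactly two infinite classes. This structure is classical, $\aleph_0$-categorical and $\aleph_0$-stable, and $G \cong (S_\infty \times S_\infty) \rtimes \Z/2\Z$. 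The subgroup $H$ preserving each class is clopen, normal, of index two, so every conjugacy class of $G$ is contained either in $H$ or in its complementary coset, both of which are non-meager; hence $G$ has no comeager conjugacy class even for $n = 1$, so it fails to have ample generics in the sense of \cite{Kechris2007a}, and \cite{Kechris2007a}*{Theorem~6.24} cannot be applied to $G$ directly. Your deferred ``verification'' would therefore fail.

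This is precisely why the paper does not simply cite \cite{Hodges1993a} for ``$G$ has ample generics'' but instead proves Lemma~\ref{l:ample-generics}: $G$ admits a basis of neighbourhoods of the identity consisting of \emph{open subgroups} with ample generics. That lemma is the real content (its proof passes through $M\eq$, amalgamation bases, and the weak amalgamation and cofinal joint embedding properties in order to translate the HHLS results into the Kechris--Rosendal framework). The paper then applies \cite{Kechris2007a}*{Theorem~6.24} to such an open subgroup $H$, obtaining continuity of $\id \colon (H, \tau) \to (H, \sigma)$; since $H$ is $\tau$-open and a homomorphism is continuous as soon as it is continuous at the identity, this gives continuity of $\id \colon (G, \tau) \to (G, \sigma)$. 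From there your argument coincides with the paper's: total minimality (Theorem~\ref{th:min-topology}) makes this continuous bijection open, hence a homeomorphism. So your overall architecture (automatic continuity plus minimality) is the right one, but the automatic-continuity input must be established for a suitable open subgroup rather than for $G$ itself.
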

The conclusion of Corollary~\ref{c:I:unique-topology} also holds for the unitary group \cite{Tsankov2013}, the automorphism group of the measure algebra \cite{BenYaacov2013}, and the isometry group of the bounded Urysohn space (Sabok~\cite{Sabok2013p}).

Finally, our interpretation of WAP functions in terms of stable formulas allows us to calculate the WAP compactification of a number of specific groups, even in a non-stable situation. To our knowledge, these are the first explicit calculations of WAP compactifications in cases where the WAP compactification is both non-trivial and different from the Roelcke compactification.
We do this in Section~\ref{sec:exampl-wap-comp}.

This paper is organized as follows. In Section~\ref{sec:roelcke-prec-polish}, we collect some general facts about Roelcke precompact Polish groups and we prove that Roelcke precompact Polish groups are exactly the automorphism groups of $\aleph_0$-categorical structures. We also discuss a model-theoretic interpretation of the Roelcke compactification. In Section~\ref{sec:wap-compactification}, we turn to the WAP compactification and prove some basic facts about the semigroup $W(G)$. In Section~\ref{sec:minimality-topology}, we discuss the connection between WAP quotients of the group $G$ and the central idempotents in $W(G)$, and we prove Theorems~\ref{th:I:totally-minimal} and \ref{th:I:wap-trivial}. In Section~\ref{sec:model-theor-viewp}, we discuss in detail the model-theoretic interpretation of $W(G)$ and we prove Theorem~\ref{th:I:stability}. Finally, Section~\ref{sec:exampl-wap-comp} is devoted to examples.

\subsection*{Acknowledgements} We would like to thank Martin Hils for suggesting a piece of notation, Aleksandra Kwiatkowska for pointing out a mistake in an earlier version of the paper, Christian Rosendal for a useful remark, and S{\l}awomir Solecki for providing a reference. Part of the work on this paper was carried out in the Hausdorff Institute in Bonn during the trimester program \emph{Universality and Homogeneity} and we are grateful to the organizers and the Institute for the hospitality and the excellent working conditions they provided.

\section{Roelcke precompact Polish groups}
\label{sec:roelcke-prec-polish}

\subsection{A characterisation of Roelcke precompact Polish groups}
If $(X, d)$ is a metric space, we denote by $\Iso(X)$ the group of isometries of $X$. Equipped with the pointwise convergence topology, $\Iso(X)$ is naturally a topological group; if the metric space $(X, d)$ is Polish, $\Iso(X)$ is a Polish group. An \emph{isometric action} of a topological group $G$ on $X$ is just a continuous homomorphism $G \to \Iso(X)$ (the continuity of $G \rightarrow \Iso(X)$ is equivalent to the action map $G \times X \to X$ being jointly continuous). When $G$ acts on $X$ isometrically and $x \in X$, we let $[x]$ (or $[x]_G$, if there is a risk of ambiguity) denote the closure of the orbit of $x$. As the action of $G$ is isometric, the orbit closures form a partition of $X$.

If $I$ is a countable set, we equip $X^I$ with any metric inducing the product uniformity such that $d(x,y)$ depends solely on the individual distances $d(x_i,y_i)$, so that any isometric action on $X$ induces an isometric action on $X^I$ (such metrics always exist). When $I$ is finite, this will most often be the maximum metric $d(x,y) = \max_i d(x_i,y_i)$.

\begin{dfn}
  Let $X$ be a complete metric space, and let $G$ act on $X$ isometrically. We equip the set of orbit closures
  \[
  X \dslash G =  \bigl\{ [x]_G\colon x \in X \bigr\}
  \]
  with the distance induced from $X$
  \[
  d\bigl( [x], [y] \bigr) = \inf \, \bigl\{ d(u, v) : u \in [x], v \in [y] \bigr\}.
  \]
  We say that the action $G \curvearrowright X$ is \emph{approximately oligomorphic} if $X^n \dslash G$ is compact for all $n$.
\end{dfn}

The fact that $G$ acts on $X$ by isometries implies that $d$ on $X \dslash G$ is indeed a distance (that is, satisfies the triangle inequality and $d([x], [y]) = 0 \implies [x] = [y]$). It also coincides with the Hausdorff distance if we view equivalence classes $[x]$ as closed subsets of $X$.

For any Cauchy sequence of orbit closures, one can choose a Cauchy sequence of representatives, so the completeness of $X$ implies that of $X^n \dslash G$. In particular, $X^n \dslash G$ is compact if and only if it is totally bounded.

Next we check that if $X$ is complete, then $X^\bN \dslash G$ can be identified with the inverse limit $\varprojlim (X^n \dslash G)$. First, the projection maps $X^\N \to X^n$ are continuous and commute with the action of $G$, which means that we obtain well-defined maps $X^\N \dslash G \to X^n \dslash G$ and therefore a continuous map $\pi \colon X^\N \dslash G \to \varprojlim (X^n \dslash G)$. The maps $X^\N \dslash G \to X^n \dslash G$ are clearly surjective and now the completeness of $X^\N \dslash G$ implies that $\pi$ is surjective as well. Finally, it is easy to check that $\pi$ is a homeomorphism.

Therefore, $G \curvearrowright X$ is approximately oligomorphic iff $X^\bN \dslash G$ is compact.

\begin{dfn}
  \label{defn:GroupActionUniformities}
  Let $G$ be a topological group acting on the left on a set $X$. A symmetric neighbourhood $U$ of $1_G$ gives rise to an entourage $\bigl\{ (x,y) \in X^2 : x \in U \cdot y \bigr\}$, and these generate the \emph{right $G$-uniformity} on $X$.
  When $X$ is a topological space, the collection of bounded complex functions on $X$ that are continuous with respect to the topology on $X$ and right uniformly continuous with respect to the group action is denoted $\RUCB_G(X)$.

  In particular, the left action of $G$ on itself gives rise to the \emph{right uniformity} on $G$.
  Similarly, the right action of $G$ on itself gives rise to the \emph{left uniformity} on $G$.

  The greatest lower bound of the left and right uniformities on $G$ is called the \emph{Roelcke uniformity} (or, sometimes, the \emph{lower uniformity}). We say that $G$ is \emph{Roelcke precompact} if its completion with respect to the Roelcke uniformity is compact.
\end{dfn}

It is not difficult to check that if $G$ acts on $X$ continuously and isometrically, then the map $g \mapsto g \cdot x$ is left uniformly continuous on $G$.

On every topological group, the right and left uniformities are compatible with the topology. The Roelcke uniformity is generated by entourages of the form $UgU$, where $U \subseteq G$ is a symmetric neighbourhood of the identity, so it, too, is compatible with the topology. It follows that $G$ is Roelcke precompact if and only if for every non-empty open $U \subseteq G$ (equivalently, for every symmetric neighbourhood of $1_G$), there is a finite set $F \subseteq G$ such that $G = UFU$ (so \autoref{defn:GroupActionUniformities} agrees with \autoref{dfn:RoelckePrecompact}). A function on $G$ is Roelcke uniformly continuous if and only if it is both left and right uniformly continuous. Every Roelcke uniformly continuous function on a Roelcke precompact group is bounded.

Every metrizable topological group $G$ admits a left-invariant compatible distance and every such distance is compatible with the left uniformity. If $d_L$ is a left-invariant distance on $G$, then $d_R$ defined by $d_R(g, h) = d_L(g^{-1}, h^{-1})$ is a right-invariant distance (compatible with the right uniformity) and $d_{L \wedge R}$ defined by
\begin{equation}
  \label{eq:dRoelcke}
  d_{L \wedge R}(g, h) = \inf_{f \in G} \max \big(d_R(g, f), d_L(f, h) \big)
\end{equation}
is a distance compatible with the Roelcke uniformity of $G$. As metrizable compact spaces are second countable, this implies that  metrizable Roelcke precompact groups are separable.

Let $\widehat G_L = \widehat{(G,d_L)}$ denote the left completion of $G$. If $X$ is a metric space on which $G$ acts continuously by isometries, then the action $G \times X \rightarrow X$ extends by continuity to a map $\widehat G_L \times \widehat X \rightarrow \widehat X$, each $x \in \widehat G_L$ inducing an isometric embedding $\widehat X \rightarrow \widehat X$.
This applies in particular when $X = (G,d_L)$, so the group law on $G$ extends to a continuous semigroup law on $\widehat G_L$, and the map $\widehat G_L \times \widehat X \rightarrow \widehat X$ is a semigroup action by isometries. Similarly, \textit{mutatis mutandis}, for the right completion $\widehat G_R$.
The following is immediate.

\begin{lem}
  \label{lem:DenseTuple}
  Let $X$ be separable and complete, and let $\xi \in X^\bN$ be \emph{dense}, i.e., enumerate a dense subset of $X$.
  Let $G \leq \Iso(X)$ be a closed subgroup, and let $\Xi = [\xi] = \overline{G \cdot \xi}$.
  Then the distance $d_L(g,h) = d(g \cdot \xi, h \cdot \xi)$ is a compatible left-invariant distance on $G$, and the map $(\widehat G_L, d_L) \rightarrow \Xi$, $x \mapsto x \cdot \xi$ is an isometric bijection.
  If we identify $\Xi$ with $\widehat G_L$ in this manner, the diagonal action of $\widehat G_L$ on $X^\bN$ coincides with the semigroup law on $\widehat G_L$.
\end{lem}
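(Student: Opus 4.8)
The plan is to treat the three assertions in turn, leaning on the extension of the isometric action to the left completion recorded just above the statement. Throughout I would write $\Phi$ for the orbit map $g\mapsto g\cdot\xi$, so that by definition $d_L$ is the pullback along $\Phi$ of the chosen product metric $d$ on $X^\bN$.

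First I would check that $d_L$ is a compatible left-invariant distance. Symmetry and the triangle inequality are inherited from $d$, and left-invariance is immediate from the fact that each $f\in G$ acts on $X^\bN$ by an isometry for the diagonal action (this is how the product metric was chosen). That $d_L$ separates points, rather than being a mere pseudo-distance, is where density enters: if $d_L(g,h)=0$ then $g\cdot\xi=h\cdot\xi$, so $h^{-1}g$ fixes every $\xi_i$; as $\{\xi_i\}$ is dense and $h^{-1}g$ is an isometry, $h^{-1}g=\id$. For compatibility it suffices to show $\Phi$ is a homeomorphism onto its image in the subspace topology from $X^\bN$. Continuity is clear coordinatewise. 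For the inverse, given $x\in X$ and $\varepsilon>0$ I would choose $\xi_i$ with $d(x,\xi_i)<\varepsilon$ and use the estimate $d(g_n\cdot x, g\cdot x)\le 2\,d(x,\xi_i)+d(g_n\cdot\xi_i,g\cdot\xi_i)$ (valid since the $g_n$ are isometries) to see that $\Phi(g_n)\to\Phi(g)$ forces $g_n\to g$ pointwise; as both topologies are metrizable, convergence of sequences suffices.

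For the isometric bijection I would apply the extension discussed above with $X^\bN$ in place of $X$ (legitimate since $X$, hence $X^\bN$, is complete), obtaining a continuous isometric action $\widehat G_L\times X^\bN\to X^\bN$, and set $\Phi(x)=x\cdot\xi$ for $x\in\widehat G_L$. The identity $d_L(g,h)=d(g\cdot\xi,h\cdot\xi)$ holds on the dense subgroup $G$ and both sides are continuous on $\widehat G_L\times\widehat G_L$, so it persists throughout; thus $\Phi$ is isometric, in particular injective. Its image lies in $\Xi$, since $x\cdot\xi=\lim_n g_n\cdot\xi$ for any $g_n\to x$. Conversely, being an isometric image of the complete space $\widehat G_L$, the image is complete, hence closed in $X^\bN$, and it contains $G\cdot\xi$; therefore it contains $\overline{G\cdot\xi}=\Xi$. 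Hence $\Phi\colon\widehat G_L\to\Xi$ is an isometric bijection.

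Finally, identifying $\Xi$ with $\widehat G_L$ along $\Phi$, the statement about the semigroup law amounts to the single identity $x\cdot(y\cdot\xi)=(xy)\cdot\xi$ for $x,y\in\widehat G_L$, which says that the action of $x$ on the point $\Phi(y)$ equals $\Phi(xy)$, i.e. left multiplication. For $x,y\in G$ this is the group-action axiom, and I would pass to the completion by continuity: choosing $g_n\to x$ and $h_n\to y$ in $\widehat G_L$, continuity of the semigroup law gives $g_nh_n\to xy$ and hence $(g_nh_n)\cdot\xi\to(xy)\cdot\xi$, while $g_n\cdot(h_n\cdot\xi)=(g_nh_n)\cdot\xi$ together with joint continuity of the extended action gives $g_n\cdot(h_n\cdot\xi)\to x\cdot(y\cdot\xi)$; equating the two limits yields the identity. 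The step I expect to require the most care is precisely this joint continuity, which is not formal from separate continuity: it follows by combining the isometry relation $d\bigl(x\cdot\zeta,x\cdot\zeta'\bigr)=d(\zeta,\zeta')$ with the left-uniform continuity of the evaluation maps $g\mapsto g\cdot\zeta$ noted before the lemma (the same property that legitimizes the extension). The density-plus-equicontinuity estimate in the compatibility argument above is the only other point that is substantive rather than purely formal.
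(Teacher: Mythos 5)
Your proof is correct and takes exactly the route the paper intends: the paper gives no proof at all, declaring the lemma ``immediate'' from the preceding paragraph on extending isometric actions to $\widehat G_L$, and your argument is precisely a careful elaboration of that remark (separation of $d_L$ via density, compatibility via the $2\,d(x,\xi_i)$ estimate, persistence of the isometry identity to the completion, and the equicontinuity argument for joint continuity). Nothing is missing.
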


Let $R(G) = \widehat G_L^2 \dslash G$ (where $G$ acts diagonally). Then the map $(G,d_{L \wedge R}) \rightarrow R(G)$ sending $g \mapsto [1_G,g] = [g^{-1}, 1_G]$ is isometric with a dense image and thus renders $R(G)$ the Roelcke completion of $G$, which we also denote by $\widehat G_{L \wedge R}$.  The involution $g \mapsto g^{-1}$ extends by continuity to bijections $\widehat G_L \rightarrow \widehat G_R$ and $\widehat G_{L \wedge R} \rightarrow \widehat G_{L \wedge R}$, which will be denoted by $x \mapsto x^*$.
In particular, elements of $\widehat G_R$ will be denoted $x^*$, where $x \in \widehat G_L$.
The group law on $G$ extends to continuous operations
\[
\widehat G_R \times \widehat G_L,\ \widehat G_R \times \widehat G_{L \wedge R},\ \widehat G_{L \wedge R} \times \widehat G_L
\longrightarrow \widehat G_{L \wedge R}.
\]
The first of these is given by $x^* y = [x,y]$, so every element of $\widehat G_{L \wedge R}$ can be written in this fashion, and everything is associative and respects the involution:
\begin{gather*}
  (x^* y)^* = y^* x, \qquad z^* (x^* y) = (xz)^* y = (z^* x^*)y, \qquad (x^* y)z = x^* (yz),
\end{gather*}
for $x,y,z \in \widehat G_L$. In particular, $\widehat G_{L \wedge R}$ is equipped with two commuting actions of $G$, one on the left, and one on the right. Consult \cite{Roelcke1981}*{Chapters 10,11} for more details on those constructions and complete proofs. Observe also that, by continuity, $x^*x = 1_G$ for all $x \in \widehat G_L$.

The following theorem gives a characterization of Roelcke precompact Polish groups in terms of approximately oligomorphic actions on metric spaces.
\begin{thm}
  \label{thm:RoelckePreCompactGroup}
  For a Polish group $G$, the following are equivalent:
  \begin{enumerate}
  \item $G$ is Roelcke precompact;
  \item Whenever $G$ acts continuously by isometries on complete metric spaces $X$ and $Y$, if both $X \dslash G$ and $Y \dslash G$ are compact then so is $(X \times Y) \dslash G$;
  \item Whenever $G$ acts continuously by isometries on a complete metric space $X$ and $X \dslash G$ is compact, the action is approximately oligomorphic;
  \item There exists a (complete and separable) metric space $X$ and a homeomorphic group embedding $G \hookrightarrow \Iso(X)$ such that the induced action $G \curvearrowright X$ is approximately oligomorphic.
  \end{enumerate}
\end{thm}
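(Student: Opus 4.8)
The plan is to prove the four conditions equivalent by running the cycle (i) $\Longrightarrow$ (ii) $\Longrightarrow$ (iii) $\Longrightarrow$ (iv) $\Longrightarrow$ (i). The substance of the argument, and the step I expect to be the real obstacle, is the first implication; the other three become short once the right spaces are put in front of them. Throughout I test compactness via total boundedness, since completeness of all the quotients $X^n \sslash G$ is automatic (as noted just before the theorem).

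For (i) $\Longrightarrow$ (ii), the idea is to convert the decomposition $G = UFU$ coming from Roelcke precompactness into a finite net for the product quotient. Fix $\epsilon > 0$. Compactness of $X \sslash G$ and $Y \sslash G$ gives finite families of orbit closures $[x_1], \dots, [x_m]$ and $[y_1], \dots, [y_n]$ that are $\epsilon$-dense. Let $U$ be the (symmetric) identity neighbourhood of all $g$ moving every $x_i$ and every $y_j$ by less than $\epsilon$, and apply Roelcke precompactness to get a finite $F$ with $G = UFU$. Given $(x,y)$, I would first move it by some $g_1$ so that $g_1 x$ lies within $\epsilon$ of some $x_i$, then find $g_2$ with $g_2 g_1 y$ within $\epsilon$ of some $y_j$, and write $g_2 = afb$ with $a,b \in U$ and $f \in F$. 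The point is that $b$ barely disturbs the first coordinate (so $b g_1 x$ stays within $2\epsilon$ of $x_i$) while $f$ records the relative position of the second (a short triangle-inequality computation, using that $a \in U$ is symmetric and acts isometrically, shows $f \cdot (b g_1 y)$ is within $2\epsilon$ of $y_j$, i.e. $b g_1 y$ is within $2\epsilon$ of $f^{-1} y_j$). Thus $b \cdot (g_1 x, g_1 y)$ is within $2\epsilon$ of $(x_i, f^{-1} y_j)$ in the max metric, so the finitely many orbit closures $[(x_i, f^{-1}y_j)]$ form a $2\epsilon$-net and $(X \times Y)\sslash G$ is totally bounded. Getting the $UFU$ decomposition to cooperate simultaneously with both actions is the delicate part.

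The implication (ii) $\Longrightarrow$ (iii) is then a routine induction: $X \sslash G$ compact is the base case, and (ii) applied to $X^n$ (compact quotient by the inductive hypothesis) and $X$ gives $X^{n+1}\sslash G = (X^n \times X)\sslash G$ compact. For (iii) $\Longrightarrow$ (iv), I would take $X = \widehat G_L$ with the left-translation action. Left-invariance of $d_L$ makes this an isometric action, and it embeds $G$ homeomorphically into $\Iso(\widehat G_L)$ (convergence is read off at the point $1_G$, while continuity of the extended semigroup law gives the converse). The key observation is that $\widehat G_L \sslash G$ is a single point: since $G$ is dense in $\widehat G_L$ and $d_L$ is left-invariant, writing $z = \lim_j g'_j$ lets one approximately undo $z$ by $(g'_j)^{-1}$, so every left-translation orbit is dense. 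A one-point quotient is compact, so (iii) upgrades this action to an approximately oligomorphic one, which is exactly (iv).

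Finally, (iv) $\Longrightarrow$ (i) uses \autoref{lem:DenseTuple}: picking a dense $\xi \in X^\bN$ identifies $\widehat G_L$ with the closed invariant set $\Xi = \overline{G \cdot \xi} \subseteq X^\bN$, so the Roelcke completion is $R(G) = \widehat G_L^2 \sslash G \cong \Xi^2 \sslash G$. Reindexing $X^\bN \times X^\bN \cong X^\bN$ and using that approximate oligomorphy makes $X^\bN \sslash G = \varprojlim X^n \sslash G$ compact, I would observe that the natural map $\Xi^2 \sslash G \to (X^\bN \times X^\bN)\sslash G$ is an isometric embedding whose image is complete, hence closed, in a compact space. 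Therefore $R(G)$ is compact, which is precisely Roelcke precompactness of $G$, closing the cycle.
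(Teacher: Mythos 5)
Your proof is correct and follows essentially the same route as the paper's: the same cycle \textnormal{(i)}~$\Rightarrow$~\textnormal{(ii)}~$\Rightarrow$~\textnormal{(iii)}~$\Rightarrow$~\textnormal{(iv)}~$\Rightarrow$~\textnormal{(i)}, with the $UFU$ decomposition (with $U$ chosen to move the finitely many net points by less than $\eps$) absorbed into $2\eps$-errors in the first implication, the left completion $\widehat G_L$ with its one-point quotient for \textnormal{(iii)}~$\Rightarrow$~\textnormal{(iv)}, and Lemma~\ref{lem:DenseTuple} identifying $R(G) = \Xi^2 \sslash G$ as a closed subset of the compact $(X^\bN \times X^\bN) \sslash G$ for the last step. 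The only differences are cosmetic bookkeeping (your $g_2 = afb$ versus the paper's $h_0^{-1} f h_1 = g_0^{-1} g_1$).
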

\begin{proof}
  \begin{cycprf}
  \item[\impnext]
    It will be enough to show that for any $\eps > 0$, $(X \times Y) \dslash G$ can be covered with finitely many balls of radius $2\eps$.
    Let us first cover $X \dslash G$ with a finite family of balls of radius $\eps$, say, centred at $[x]$ for $x \in X_0 \subseteq X$.
    Similarly, let us cover $Y \dslash G$ with $\eps$-balls centred at $[y]$ for $y \in Y_0 \subseteq Y$.
    Let $U \subseteq G$ be a symmetric neighbourhood of $1_G$ such that $\diam(U \cdot x) < \eps$ and $\diam(U \cdot y) < \eps$ for all $x \in X_0$, $y \in Y_0$, and let $F \subseteq G$ be a finite set such that $UFU = G$.
    Finally, let $W = \bigl\{ (x,f \cdot y)\colon x \in X_0, \, y \in Y_0, \, f \in F \bigr\} \subseteq X \times Y$.

    Consider now any $[u,v] \in (X \times Y) \dslash G$.
    First, there are $x \in X_0$, $y \in Y_0$ and $g_0,g_1 \in G$ such that $d(u,g_0 \cdot x) < \eps$ and $d(v,g_1 \cdot y) < \eps$.
    Second, there are $f \in F$ and $h_0, h_1 \in U$ such that $h_0^{-1} f h_1 = g_0^{-1} g_1$.
    Then
    \begin{gather*}
      [u,v] \sim [g_0 \cdot x,g_1 \cdot y] = [h_0  \cdot x,f h_1 \cdot y] \sim [x,f \cdot y] \in W,
    \end{gather*}
    where $\sim$ means distance $< \eps$.
    Thus $(X \times Y) \dslash G$ is covered by finitely many balls of radius $2\eps$, as desired.
  \item[\impnext]
    By induction, $X^n \dslash G$ is compact for all $n$.
  \item[\impnext]
    Let $\widehat G_L = \widehat{(G, d_L)}$ denote the left completion of $G$. Then the left action of $G$ on itself gives rise to a homeomorphic embedding $G \hookrightarrow \Iso(\widehat G_L)$ and $\widehat G_L \dslash G$ is a single point.
  \item[\impfirst]
    Replacing $X$ with $\widehat Y$, where $Y \subseteq X$ is an appropriate separable $G$-invariant subspace, we may assume that $X$ is separable and complete.
    Since $G \curvearrowright X$ is approximately oligomorphic, $X^\bN \dslash G \cong (X^\bN \times X^\bN) \dslash G$ is compact.
    Let $\xi \in X^\bN$ be dense and $\Xi = [\xi]$ as in \autoref{lem:DenseTuple}, in which case $\Xi^2 \dslash G \subseteq (X^\bN \times X^\bN) \dslash G$ is closed and therefore compact as well.
    In other words, $R(G) = \widehat G_L^2 \dslash G$ is compact, so $G$ is Roelcke precompact.
  \end{cycprf}
\end{proof}

\begin{rmk*}
  The direction (iii) $\Longrightarrow$ (i) of \autoref{thm:RoelckePreCompactGroup} is due to Rosendal~\cite{Rosendal2009}*{Theorem~5.2}.
  The full statement of \autoref{thm:RoelckePreCompactGroup} was observed independently by Rosendal and the first author of the present paper during a lecture of the second author in Texas in 2010, and a similar result appears in Rosendal~\cite{Rosendal2013}*{Proposition~1.22}.
\end{rmk*}

The next lemma is a translation of a simple amalgamation result from model theory. It will be useful in Section~\ref{sec:wap-compactification}.
\begin{lem}
  \label{lem:ComplexAmalgamation}
  Assume that $G$ is Roelcke precompact.
  For all $p \in \widehat G_{L \wedge R}$ and $x,y \in \widehat G_L$ such that $p x = p y$, there are $w,u,v \in \widehat G_L$ such that $w^* u = w^* v = p$ and $uy = vx$.
\end{lem}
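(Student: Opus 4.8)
The plan is to collapse the whole statement to a single amalgamation (type-realization) problem inside $\widehat G_L$ and then to solve that problem using the homogeneity built into Roelcke precompactness. First I would fix a presentation $p = a^* b$ with $a,b \in \widehat G_L$, which is available because every element of $\widehat G_{L \wedge R}$ is of this form. Using the associativity relation $(x^* y)z = x^*(yz)$, the hypothesis $px = py$ rewrites as $a^*(bx) = a^*(by)$, that is, $[a,bx] = [a,by]$ in $\widehat G_{L \wedge R} = \widehat G_L^2 \sslash G$. Since this compactification is the space of orbit closures of the diagonal action of $G$ on $\widehat G_L^2$, the equality says exactly that $(a,bx)$ and $(a,by)$ lie in the same $G$-orbit closure.

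The reduction I have in mind is the following: it suffices to produce a single $v \in \widehat G_L$ with $a^* v = p$ (equivalently $[a,v] = [a,b]$) and $vx = by$. Granting such a $v$, one takes $w = a$ and $u = b$; then $w^* u = a^* b = p$ and $w^* v = a^* v = p$ by construction, while $uy = by = vx$ gives the required equation. Thus all the content is concentrated in finding $v$.

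To find $v$, observe that $[a,bx] = [a,by]$ yields a net $(g_i) \subseteq G$ with $g_i \cdot a \to a$ and $g_i \cdot (bx) \to by$ in $\widehat G_L$. The elements $v_i := g_i b$ are approximate solutions: applying the diagonal $g_i^{-1}$ and using left-invariance, $d_L(g_i^{-1} a, a) = d_L(a, g_i a) \to 0$, so $[a,v_i] = [g_i^{-1} a, b] \to [a,b]$, while $v_i x = g_i \cdot (bx) \to by$. In the classical case this can be made exact at one stroke: identifying $\widehat G_L$ with $\Xi = [\xi]$ via \autoref{lem:DenseTuple}, the equality $[a,bx] = [a,by]$ means that the map fixing $a \cdot \xi$ and sending $bx \cdot \xi$ to $by \cdot \xi$ is an isomorphism of substructures, so homogeneity of the underlying $\aleph_0$-categorical structure supplies an honest $\sigma \in G$ with $\sigma \cdot a = a$ and $\sigma \cdot (bx) = by$; then $v = \sigma b$ works, since $vx = \sigma \cdot (bx) = by$ and $[a,\sigma b] = [\sigma^{-1} a, b] = [a,b]$.

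The main obstacle is the passage from the approximate solutions $v_i$ to an exact $v \in \widehat G_L$ in the general metric case, where homogeneity is only approximate and the fibre $\{v : [a,v] = [a,b]\}$ is typically noncompact, so one cannot simply extract a convergent subnet. I would resolve this by a back-and-forth construction: since the diagonal action $G \curvearrowright \widehat G_L$ is approximately oligomorphic by \autoref{thm:RoelckePreCompactGroup}, the finitely many coordinates of the prescribed configuration over $a$ and over the fixed data $x,y$ can be amalgamated at each finite stage, and completeness of $\widehat G_L$ lets one pass to the limit, producing an exact realization $v$ of the two closed conditions $a^* v = p$ and $vx = by$. This is precisely the translation of the model-theoretic amalgamation alluded to before the statement; once $v$ is in hand, checking that the limit satisfies both conditions is routine from continuity of the semigroup operations.
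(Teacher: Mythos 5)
Your very first reduction is where the argument fails, and it is not repairable: the statement you reduce to --- given the fixed presentation $p = a^*b$ and the hypothesis $px = py$, find $v \in \widehat G_L$ with $a^*v = p$ and $vx = by$ --- is strictly stronger than the lemma and is false in general. Take $G = S_\infty$, so that $\widehat G_L$ is the set of injections $\bN \to \bN$ and an element $a^*b \in \widehat G_{L \wedge R}$ is identified with the partial bijection $a^{-1}b$, defined where $b$ takes values in $\ran(a)$ (Example~\ref{ex:Sinfty}). Let $a(n) = 2n$, $b(n) = 2n+1$, $x(n) = 2n$, $y(n) = n$. Since $\ran(b)$ is disjoint from $\ran(a)$, the element $p = a^*b$ is the empty partial bijection, and so are $px = a^*(bx)$ and $py = a^*(by)$; hence $px = py$. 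Your $v$ would have to satisfy $a^*v = a^*b$, which here says exactly that $\ran(v)$ is disjoint from $\ran(a)$, i.e., that $v$ takes only odd values, and also $vx = by$, i.e., $v(2n) = 2n+1$ for all $n$. The second condition forces $v$ to map the even numbers onto \emph{all} odd numbers, leaving no admissible values for $v$ on the odd numbers: no such $v$ exists. (The lemma itself holds here, e.g., with $w = a$, $u(n) = 4n+1$, $v(2n) = 4n+1$, $v(2n+1) = 4n+3$; the point is that one cannot insist on $u = b$.) The same example refutes your classical-case appeal to homogeneity: a $\sigma \in G$ fixing $a$ pointwise with $\sigma(bx) = by$ would fix all even numbers and map the numbers $\equiv 1 \pmod 4$ onto all odd numbers, leaving nowhere for the numbers $\equiv 3 \pmod 4$ to go. The underlying error is that equality of orbit closures (equality of types of \emph{infinite} tuples) is much weaker than conjugacy under $G$: homogeneity of an $\aleph_0$-categorical structure extends isomorphisms between finitely generated substructures, not between infinite ones, so no back-and-forth or completeness argument can upgrade your approximate solutions $v_i$ to an exact one while keeping both the base $w = a$ and the side $u = b$ frozen.

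This is precisely why the paper's proof does not keep the presentation fixed: writing the hypothesis as $h_n z \to z$, $h_n t x \to t y$ for some $h_n \in G$ (where $p = z^*t$), it passes to a limit of the whole configuration $[z, h_n^{-1}, 1_G]$ in $\widehat G_L^3 \sslash G$, which is compact because $G$ is Roelcke precompact (Theorem~\ref{thm:RoelckePreCompactGroup}). This produces $g_n \in G$ with $g_n z \to w$, $g_n h_n^{-1} \to u_0$, $g_n \to v_0$, and one sets $u = u_0 t$, $v = v_0 t$; the base $w = \lim g_n z$ is in general a genuinely new point, not $z$. Roelcke precompactness for \emph{triples} is exactly what allows the amalgamation to be performed ``in the limit'' over this moved base, whereas your proposal uses compactness only through $\widehat G_L^2 \sslash G$ and attempts the amalgamation over the original base, which the counterexample above shows is impossible.
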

\begin{proof}
  Say $p = z^* t$ for some $z, t \in \widehat G_L$, so that $px = py$ means that there is a sequence $h_n \in G$ such that $h_n z \rightarrow z$ and $h_n tx \rightarrow ty$.
  We may assume that the sequence $[z,h_n^{-1},1_G]$ converges to $[w,u_0,v_0]$ in $\widehat G_L^3 \dslash G$, i.e., there are $g_n \in G$ such that $g_n z \rightarrow w$, $g_n h_n^{-1} \rightarrow u_0$ and $g_n \rightarrow v_0$ in $\widehat G_L$.
  Letting $u = u_0 t$, $v = v_0 t$, we have $w^* u = w^* v = z^* t = p$ and $uy = \lim g_n h_n^{-1} h_n t x = vx$, as desired.
\end{proof}

\subsection{A model-theoretic interpretation of the Roelcke compactification}
\label{sec:roelcke-prec-polish-mt}

Theorem~\ref{thm:RoelckePreCompactGroup} above, together with classical results from model theory, gives abundant examples of Roelcke precompact Polish groups. More precisely, by combining a classical theorem of Ryll-Nardzewski, Engeler and Svenonius and its generalization to continuous logic \cite{BenYaacov2007a} with Theorem~\ref{thm:RoelckePreCompactGroup}, one obtains that Roelcke precompact Polish groups are exactly the automorphism groups of $\aleph_0$-categorical structures. We refer the reader to \cite{Hodges1993} for more details on this theorem in the classical situation and \cites{BenYaacov2007a,BenYaacov2008} for the continuous logic version and content ourselves with giving a general model-theoretic description of the Roelcke compactification and concrete calculations for a few examples.

A \emph{structure} $\bM$ is a complete metric space $(M, d)$ together with a set of predicates $\set{P_i : i \in I}$, where by a \emph{predicate}, we mean a bounded uniformly continuous function $P_i\colon M^{k_i} \rightarrow \bC$.
We call a structure $\bM$ \emph{classical} if all predicates, including the distance function, take only the values $0$ and $1$ (so $M$ is discrete).
The \emph{automorphism group} of $\bM$, denoted by $\Aut(\bM)$, is the group of all isometries of $(M, d)$ that also preserve the predicates, i.e., $P_i(g \cdot \bar a) = P_i(\bar a)$ for all $\bar a \in M^{k_i}$, $i \in I$. This is necessarily a closed subgroup of $\Iso(M)$, and therefore Polish when $M$ is separable.
We say that $\bM$ is \emph{$\aleph_0$-categorical} if it is separable and its first-order theory admits a unique separable model up to isomorphism.
By the Ryll-Nardzewski theorem, this holds if and only if the action $\Aut(\bM) \actson M$ is approximately oligomorphic.
Moreover, even though formulas are constructed syntactically from the predicates using continuous combinations and quantifiers, the Ryll-Nardzewski theorem tells us that when $\bM$ is $\aleph_0$-categorical, the (interpretations of) formulas on $M^\alpha$ are exactly the continuous $\Aut(\bM)$-invariant functions $M^\alpha \rightarrow \bC$, so all the logical information is contained in the action of $\Aut(\bM)$ on $M$.
Conversely, given any complete separable metric space $M$ and closed $G \leq \Iso(M)$, one can equip $M$ with predicates so as to obtain a structure $\bM$ with $\Aut(\bM) = G$.
For example, if $G$ is any Roelcke precompact Polish group then $G \curvearrowright \widehat G_L$ is approximately oligomorphic by \autoref{thm:RoelckePreCompactGroup}, so $\bM_G = (\widehat G_L,G)$ is $\aleph_0$-categorical (this construction is due to J.~Melleray).

Thus, for our purposes, we can define an $\aleph_0$-categorical structure as a pair $\bM = (M,G)$ where $M$ is a complete, separable metric space and $G \leq \Iso(M)$ is closed and acts approximately oligomorphically on $M$.
Classical $\aleph_0$-categorical structures correspond to such pairs where $M$ is equipped with the discrete $0$/$1$ distance.

For the rest of this subsection, let $\bM$ denote a fixed $\aleph_0$-categorical structure and let $G = \Aut(\bM)$; in particular, $G$ is Roelcke precompact.
Each $x \in \widehat G_L$ induces elementary embedding $x \colon \bM \rightarrow \bM$, and every elementary embedding arises uniquely in this fashion.

As per \autoref{lem:DenseTuple}, we identify $\widehat G_L$ with $\Xi = [\xi] \subseteq M^\bN$, where $\xi \in M^\bN$ is a dense sequence.
We may think of $\xi$ as an enumeration of $M$ (in fact, any tuple $\xi$ such that $M = \dcl(\xi)$ will suffice as well).
Accordingly, a point $x \in \Xi$ should be considered to enumerate the elementary substructure $x(\bM) \preceq \bM$.
A point $x^* y = [x,y] \in (\Xi \times \Xi) \dslash G = R(G)$ can be identified with $\tp(x,y)$, or, if one so wishes, with $\tp(x(M), y(M))$, and $G$ acts on $R(G)$ on either side by acting on the corresponding copy of $\bM$. An element $x^*y \in R(G)$ is an element of $G$ if the images $x(M)$ and $y(M)$ coincide (and then $x^{-1}y \colon M \to M$ is an automorphism of $\bM$).

Thus, a model-theorist may take a slightly different approach, and define directly
\begin{equation}
  \label{eq:2}
  R(G) = \set{\tp(x, y) : \tp(x) = \tp(y) = \tp(\xi)},
\end{equation}
i.e., morally speaking, the set of all possible ways to place two copies of $\bM$ one with respect to the other. This gives us a means to calculate $R(G)$ when $G = \Aut(\bM)$ for some familiar $\aleph_0$-categorical structures $\bM$. Below, we carry out the calculation in several examples.

\begin{exm}
  \label{ex:Sinfty}
  \textit{The full permutation group.} Let $M$ be a countable discrete set and $S_\infty$ denote the group of all permutations of $M$. Model-theoretically, $M$ is a countable structure in the empty language and $S_\infty$ is its automorphism group. By \eqref{eq:2}, $R(S_\infty)$ is the set of types of  pairs of embeddings $x, y \colon M \rightarrow M$; as the only element of the language is equality, the only information the type specifies is of the kind $x(a) = y(b)$ or $x(a) \neq y(b)$ for $a, b \in M$. We can therefore identify $\tp(x, y)$ with the partial bijection $x^{-1}y \colon M \to M$, whence
  \[
  R(S_\infty) = \set{\text{all partial bijections } M \to M},
  \]
  equipped with the topology inherited from $2^{M \times M}$. That $S_\infty$ is Roelcke precompact was first shown by Roelcke--Dierolf~\cite{Roelcke1981}; the compactification was calculated by Uspenskij~\cite{Uspenskii2002} and Glasner--Megrelishvili~\cite{Glasner2008}*{Section~12}.
\end{exm}

\begin{exm}
  \label{ex:Q}
  \textit{The dense linear ordering.} Let $(\Q, <)$ denote the set of rational numbers equipped with its natural linear order and let $\Aut(\Q)$ be its automorphism group. As before, $R(\Aut(\Q))$ is the set of types of pairs of embeddings $x, y \colon \Q \rightarrow \Q$. One way to visualize this is as the set of all linear orderings on $x(\Q) \cup y(\Q)$; this can be represented as a certain closed subset of $3^{\Q \times \Q}$, where for $\alpha \in 3^{\Q \times \Q}$, $\alpha(a, b)$ determines which of the three possibilities $x(a) < y(b)$, $x(a) = y(b)$, $x(a) > y(b)$ holds.
\end{exm}

\begin{exm}
  \label{ex:UH}
  \textit{The Hilbert space.} Let $\mcH$ be a separable, infinite-dimensional Hilbert space. The group of isomorphisms of $\mcH$ is $U(\mcH)$, its unitary group. The action $U(\mcH) \actson \mcH$ is not approximately oligomorphic because $\mcH \dslash U(\mcH) \cong \R^+$ is not compact. However, the action restricted to the unit sphere (a single orbit for $U(\mcH)$) is approximately oligomorphic and the action on the sphere determines the action on the whole space by scaling. As before, $R(U(\mcH))$ is the space of types of pairs of embeddings $x, y \colon \mcH \to \mcH$; such a type is determined by the values of the inner product $\ip{x(\xi), y(\eta)}$ for $\xi, \eta \in \mcH$, i.e., an element $p$ of $R(U(\mcH))$ is just a bilinear form $\ip{\cdot, \cdot}_p$ on $\mcH$ satisfying $|\ip{\xi, \eta}_p| \leq 1$ for $\xi, \eta$ in the unit sphere. Every such bilinear form defines a linear contraction $T_p$ on $\mcH$ by $\ip{T_p \xi, \eta} = \ip{\xi, \eta}_p$. We conclude that $R(U(\mcH))$ can be identified with the space $B(\mcH)_1$ of contractions equipped with the weak operator topology. The involution in this case is just the adjoint operation in $B(\mcH)_1$. The Roelcke compactification of $U(\mcH)$ was first computed by Uspenskij~\cite{Uspenskii1998}.
\end{exm}

\begin{exm}
  \label{ex:AutMu}
  \textit{The measure algebra.} Let $\MALG$ denote the measure algebra of a standard probability space $(X, \mu)$ (i.e., the collection of measurable subsets of $X$ modulo null sets). It is naturally a metric space with the distance $d(A, B) = \mu(A \sdiff B)$. Let $\Aut(\mu)$ denote its automorphism group. It is easy to check, using homogeneity, that the action $\Aut(\mu) \actson \MALG$ is approximately oligomorphic and $\Aut(\mu)$ is therefore Roelcke precompact. The Roelcke compactification of $\Aut(\mu)$ is the set of types of pairs of embeddings $\MALG \rightarrow \MALG$, or, dually, the set of types of pairs of measure-preserving maps $\pi_1, \pi_2 \colon X \to X$. The type of such a pair can be identified with the measure $(\pi_1 \times \pi_2)_*(\mu)$ on $X \times X$; we therefore obtain that
$R(\Aut(\mu))$ is the set of \emph{self-couplings} of $(X, \mu)$, i.e., all probability measures on $X \times X$ whose marginals are equal to $\mu$. The Roelcke compactification of $\Aut(\mu)$ was first computed by Glasner~\cite{Glasner2012a}; he described a different, but equivalent, representation.
\end{exm}

\begin{exm}
  \label{ex:IsoU1}
  \textit{The bounded Urysohn space.} Let $\bU_1$ denote the unique homogeneous Polish metric space of diameter bounded by $1$ universal for finite metric spaces of diameter bounded by $1$ and let $\Iso(\bU_1)$ be its isometry group. The type of a pair of embeddings $x, y \colon \bU_1 \to \bU_1$ is determined by the distances $d(x(a), y(b))$ for $a, b \in \bU_1$. Following Uspenskij~\cite{Uspenskii2008}, we see that a function $f \colon \bU_1 \times \bU_1 \to \R^+$ represents such a type if and only if it is \emph{bi-Kat\v{e}tov}, i.e., satisfies the conditions
\begin{align*}
  f(a,b) + f(a',b) &\geq d(a,a')  \quad \text{and} \\
  f(a,b) + d(a,a') &\geq f(a') \quad \text{for all } a, a', b \in \bU_1,
\end{align*}
as well as the symmetric ones for the second argument. Accordingly, $R(\Iso(\bU_1))$ can be identified with the space of all bi-Kat\v{e}tov functions on $\bU_1 \times \bU_1$ bounded by $1$, equipped with the pointwise convergence topology. This compactification was first identified in \cite{Uspenskii2008}.
\end{exm}

Note that the isometry group of the unbounded Urysohn space $\bU$ is not Roelcke precompact. This is a consequence of Theorem~\ref{thm:RoelckePreCompactGroup}: while $\bU \dslash \Iso(\bU)$ is a single point,
\[
\bU^2 \dslash \Iso(\bU) \cong \set{d(a, b) : a, b \in \bU} = \R^+
\]
is not compact.

\section{The WAP compactification}
\label{sec:wap-compactification}

Let $G$ be a Polish Roelcke precompact group. A function $f \in \RUCB(G)$ is called \emph{weakly almost periodic} if the orbit $G \cdot f$ is weakly precompact in the Banach space $\RUCB(G)$. It is well known that the space of weakly almost periodic functions $\WAP(G)$ is norm-closed and stable under multiplication (this is an easy consequence of Theorem~\ref{th:GrothendieckGroups} below), so $\WAP(G)$ is a commutative C$^*$-algebra. Let $W(G)$ denote its Gelfand space, so that $\WAP(G) \cong C(W(G))$. Every weakly almost periodic function is Roelcke uniformly continuous \cite{Ruppert1984}*{Chapter~III, Corollary~2.12}, whence we obtain a natural quotient map $R(G) \to W(G)$. In particular, $\WAP(G)$ is separable and $W(G)$ is metrizable. $W(G)$ is called the \emph{WAP compactification} of $G$ but note that this compactification (as opposed to the Roelcke compactification) is not always faithful, for example, it can be trivial (see Section~\ref{sec:minimality-topology}). Despite the fact that the compactification map $G \to W(G)$ is not always injective, we will often suppress it in our notation, i.e., consider elements $g \in G$ also as elements of $W(G)$.

One of the main facts about weakly almost periodic functions is the following theorem of Grothendieck~\cite{Grothendieck1952}.
\begin{thm}[Grothendieck]
  \label{th:GrothendieckGroups}
  A function $f \in \RUCB(G)$ is weakly almost periodic if and only if for all sequences $\set{g_n}_n, \set{h_m}_m \sub G$,
  \begin{equation}
    \label{eq:GrothendieckGroups}
    \lim_n \lim_m f(g_n h_m) = \lim_m \lim_n f(g_n h_m),
  \end{equation}
  whenever both limits exists.
\end{thm}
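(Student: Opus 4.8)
The plan is to deduce the group statement from Grothendieck's classical double limit criterion for relative weak compactness in a space of continuous functions, which is the functional-analytic core of \cite{Grothendieck1952}. Since $\RUCB(G)$ is a commutative unital C$^*$-subalgebra of $\ell^\infty(G)$, Gelfand duality gives $\RUCB(G) \cong C(K)$, where $K$ is its spectrum and the evaluation map $G \to K$ is continuous with dense image (no nonzero element of $\RUCB(G)$ vanishes identically on $G$). The left translation action $(g \cdot f)(x) = f(g^{-1}x)$ preserves $\RUCB(G)$ — if right uniform continuity of $f$ is witnessed by a neighbourhood $U \ni 1_G$, then, since $(g^{-1}x)(g^{-1}y)^{-1} = g^{-1}(xy^{-1})g$, the translate $g \cdot f$ is right uniformly continuous with the conjugated neighbourhood $gUg^{-1}$ — and it is isometric for the sup norm, so the orbit $\mathcal{O} = G \cdot f$ is a bounded subset of $C(K)$. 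By definition $f \in \WAP(G)$ means exactly that $\mathcal{O}$ is relatively weakly compact in $\RUCB(G) = C(K)$. I therefore intend to apply to $\mathcal{O}$ the criterion: a bounded $A \sub C(K)$ is relatively weakly compact if and only if, for all sequences $(u_n) \sub A$ and $(\kappa_m) \sub K$ for which both iterated limits $\lim_n \lim_m u_n(\kappa_m)$ and $\lim_m \lim_n u_n(\kappa_m)$ exist, they agree. (Roelcke precompactness plays no role; only separability of $G$ will be used, and only in the reverse direction.)

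Granting this criterion, the forward implication is immediate. If $f \in \WAP(G)$, then $\mathcal{O}$ is relatively weakly compact, so the double limit condition holds for all point-sequences in $K$; restricting the $\kappa_m$ to the dense copy of $G$ and writing $u_n = g_n \cdot f$ gives $u_n(h_m) = f(g_n^{-1} h_m)$. As $(g_n^{-1})_n$ ranges over all sequences in $G$ when $(g_n)_n$ does, this is exactly \eqref{eq:GrothendieckGroups} after the harmless reindexing $g_n \mapsto g_n^{-1}$.

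The content lies in the reverse implication: assuming \eqref{eq:GrothendieckGroups} for all sequences in $G$, I must verify the $C(K)$-criterion, whose points range over all of $K$. The difficulty is that $K$ is in general not metrizable, so a point $\kappa \in K$ need not be the limit of any sequence drawn from the dense set $G$, and control of the orbit on $G$ does not by itself determine its pointwise limits at the boundary points of $K$ — this is precisely where the hypothesis \eqref{eq:GrothendieckGroups} must be brought to bear. The strategy is to transport a hypothetical failure of the $C(K)$-criterion back to a failure over $G$. Suppose $(u_n) \sub \mathcal{O}$ and $(\kappa_m) \sub K$ had both iterated limits existing but unequal, say equal to $a \ne b$. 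Using that each $u_n$ is continuous on $K$ and that $G$ carries a countable dense subset $G_0$ (here separability of $G$ enters), I would select, by a diagonal interleaving, points $p_\ell \in G_0$ approximating the $\kappa_m$ together with a subsequence $(g_{n_k} \cdot f)$ of the orbit, so arranged that the group array $f(g_{n_k}^{-1} p_\ell)$ has both iterated limits existing and equal to $a$ and $b$ respectively — contradicting \eqref{eq:GrothendieckGroups}.

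The delicate point, and the step I expect to be the main obstacle, is the asymmetry of approximation: choosing $p_\ell$ so that $u_n(p_\ell)$ is close to $u_n(\kappa_\ell)$ is only possible for the finitely many indices $n \le \ell$ at once, which pins down one iterated limit but leaves the other uncontrolled, so a single naive diagonal does not suffice. The resolution is the classical interleaving argument underlying Grothendieck's lemma: one constructs the two sequences in $G_0$ alternately, at each stage passing to a further subsequence and adjoining a new approximating point so as to lock in, in turn, the row-limits near $a$ and the column-limits near $b$, while ensuring both iterated limits of the final array exist. Equivalently, one may package this as the angelicity of $C(K)$ in the topology of pointwise convergence, together with the lemma that a uniformly bounded sequence in $C(K)$ which converges pointwise on $G_0$ and satisfies \eqref{eq:GrothendieckGroups} converges pointwise on all of $K$ to a continuous function; either route reduces the $C(K)$-criterion to the hypothesis over $G$ and completes the argument.
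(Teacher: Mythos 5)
Your overall strategy --- reducing the group statement to Grothendieck's double-limit criterion --- is exactly what the paper intends: the paper gives no proof of \autoref{th:GrothendieckGroups} at all, but cites \cite{Grothendieck1952} and later records the precise form it relies on as \autoref{thm:GrothendieckCriterion}, namely the criterion for an \emph{arbitrary} topological space $X$ with test points drawn from a \emph{dense} subset $X_0 \sub X$. With that form the deduction is immediate and needs no compactification: take $X = X_0 = G$ and $A = G \cdot f \sub C_b(G)$, note that since $\RUCB(G)$ is a norm-closed subspace of $C_b(G)$ the orbit is weakly precompact in $\RUCB(G)$ if and only if it is weakly precompact in $C_b(G)$, and observe that the double-limit condition over $X_0 = G$ is literally \eqref{eq:GrothendieckGroups}. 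By instead passing to the Gelfand space $K$ and quoting only the compact-space version of the criterion, with test points ranging over all of $K$, you manufacture the dense-subset difficulty yourself, and your reverse implication then amounts to re-proving the dense-set clause of Grothendieck's Th\'eor\`eme~6 rather than deducing the theorem from it.

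Moreover, the primary mechanism you propose for that bridge would fail as described. Approximating $\kappa_{m_\ell}$ by $p_\ell \in G_0$ controls $u_{n_k}(p_\ell)$ only for $k \le \ell$, as you yourself note; consequently the columns of any array built this way converge (after a diagonal extraction) to the values $\phi(p_\ell)$ of the pointwise limit $\phi$ on $G_0$ of the chosen subsequence, and nothing in the construction can push $\lim_\ell \phi(p_\ell)$ toward $b$: under the hypothesis \eqref{eq:GrothendieckGroups}, every such group array has \emph{equal} iterated limits, so no interleaving of this shape can ``lock in'' row limits near $a$ and column limits near $b$. The classical argument runs differently, and is what your alternative ``angelicity plus lemma'' packaging gestures at: extract a subsequence converging pointwise on the countable dense $G_0$; show via \eqref{eq:GrothendieckGroups} that it converges pointwise on all of $K$ (compare, for two subsequences with distinct limits $\alpha \neq \beta$ at some $\kappa \in K$ but the same limit $\phi$ on $G_0$, the two arrays against the common column limits $\phi(p_\ell)$, which forces $\alpha = \beta$) and that the limit function is continuous; then dominated convergence for Radon measures and Eberlein--\v{S}mulian finish the proof. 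That lemma is the real content of the reverse direction, it is left unproved in your proposal, and it is precisely a special case of the theorem the paper cites. In short: the skeleton is sound and the forward direction is fine, but the step you correctly identify as the crux is resolved by an argument that, in your primary formulation, does not work, and whose correct form is the cited \autoref{thm:GrothendieckCriterion} itself.
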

Using Theorem~\ref{th:GrothendieckGroups}, it is easy to define a semigroup law on $W(G)$: if $p, q \in W(G)$ with $p = \lim_n g_n$ and $q = \lim_m h_m$, $\set{g_n}_n, \set{h_m}_m \sub G$, we define $pq$ as $\lim_n \lim_m g_n h_m$. This multiplication is associative and continuous in each variable but, in general, not as a function of two variables, i.e., $W(G)$ has the structure of a \emph{semitopological semigroup}. The involution $p \mapsto p^*$, which we defined for the Roelcke compactification, descends naturally to $W(G)$, where it is continuous and compatible with the multiplication: $(pq)^* = q^* p^*$.

The compactification $W(G)$ has the following universal property: if $S$ is a compact semitopological semigroup and $\pi \colon G \to S$ is a continuous homomorphism, then $\pi$ extends to a homomorphism $W \to S$. This is because for every continuous function $f \in C(S)$, the function $f \circ \pi$ is WAP on $G$.

From now on, we will write $W$ for $W(G)$. As the map $G \to W$ is left uniformly continuous, it extends to a map $\widehat G_L \to W$. Similarly, we also have a map $\widehat G_R = \widehat G_L^* \to W$. From our description of the Roelcke compactification of $G$ and the fact that the map $R(G) \to W$ is surjective, it follows that every element of $W$ can be written as $x^*y$ for some $x, y \in \widehat G_L$.

Next we define the following partial preorder on $W$
\[
p \leq_L q \iff Wp \sub Wq
\]
and the corresponding equivalence relation
\[
p \equiv_L q \iff p \leq_L q \And q \leq_L p.
\]
There are an analogous preorder and equivalence relation if one considers right ideals in $W$ instead of left.
The two are exchanged by the involution.
The equivalence relation $\equiv_L$ is known as one of \df{Green's relations} in the semigroup theory literature.

Observe that for $x \in \widehat G_L$ and $p \in W$, $xp \equiv_L p$.

Below we will make use of the following important joint continuity theorem (\cite{Ruppert1984}*{Chapter~II, Theorem~3.6}).
\begin{thm}  \label{th:joint-cont}
  Let $S$ be a compact metrizable semitopological semigroup with a dense subgroup and let $p \in S$. Then the multiplication $S \times Sp \to S$ is jointly continuous at $(s, p)$ for every $s \in S$.
\end{thm}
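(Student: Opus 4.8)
My plan is to read this as a Namioka-type ``separate continuity implies joint continuity'' statement, where the dense subgroup is precisely the device that upgrades generic joint continuity to joint continuity at every point of the slice $S \times \{p\}$. Fix a compatible metric $d$ on $S$ and write $\Gamma$ for the dense subgroup. Since $\Gamma$ is a group and the multiplication is separately continuous, its identity is a two-sided identity $1$ of $S$ (approximate any $x\in S$ by $\gamma_n\in\Gamma$ and use continuity of $y\mapsto 1\cdot y$); in particular $p = 1\cdot p \in Sp$, so $(s,p)$ genuinely lies in $S\times Sp$ and the statement is well posed. As $S$ is compact metrizable I would argue with sequences: given $s_n\to s$ in $S$ and $q_n\to p$ with $q_n\in Sp$, the goal is $s_nq_n\to sp$. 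Separate continuity already gives $s_np\to sp$, so by the triangle inequality it suffices to show $d(s_nq_n,s_np)\to 0$; that is, the theorem is equivalent to an equicontinuity statement for the left translations $x\mapsto xy$ as $y\to p$ along $Sp$.

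The first step is to produce joint continuity \emph{generically}. The multiplication $m\colon S\times Sp\to S$ is separately continuous, $S$ is compact metric (hence a Baire, strongly countably complete space) and $Sp$ is compact, so Namioka's joint continuity theorem yields a residual (dense $G_\delta$) set $A\subseteq S$ such that $m$ is jointly continuous at every point of $A\times Sp$, and in particular at every $(a,p)$ with $a\in A$.

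The second step is to spread this out using $\Gamma$. For $\gamma\in\Gamma$ the translation $L_\gamma\colon x\mapsto \gamma x$ is a homeomorphism of $S$ (its inverse $L_{\gamma^{-1}}$ is again continuous by separate continuity), and associativity gives $m(\gamma x,y)=\gamma\,m(x,y)=L_\gamma m(x,y)$. Hence the set of joint continuity points of $m$ is invariant under $x\mapsto\gamma x$ in the first coordinate; and since $m(x,\gamma y)=m(x\gamma,y)$, the same reasoning shows invariance under the diagonal map $(x,y)\mapsto(x\gamma^{-1},\gamma y)$. Combining these with $A$ enlarges the set of good first coordinates to a $\Gamma$-invariant residual set, and I would record that $\Gamma p$ is dense in $Sp$ (it is $R_p(\Gamma)$ with $R_p$ continuous and $\Gamma$ dense, so $\overline{\Gamma p}\supseteq R_p(S)=Sp$).

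The hard part, and the step I expect to be the main obstacle, is the final upgrade from ``a residual, $\Gamma$-invariant set of good $s$'' to ``every $s$''. This is exactly where $S$ being only a semigroup rather than a group is felt: the orbit $\Gamma s$ is dense merely in $sS$, which can be a proper, even nowhere dense, subset of $S$, so one cannot simply translate an arbitrary point of joint continuity onto $(s,p)$ as one does in the proof of Ellis's theorem. Overcoming this needs a genuine Baire-category argument exploiting the invariance of the (comeager, $G_\delta$) set of joint continuity points together with the special role of $p$: the second coordinate is pinned at the generator $p=1\cdot p$ of the ideal, so that an approximating sequence written as $q_n=t_np$ with $t_n\to t$, $tp=p$, can be fed — using that $1\in\Gamma$ — back into the residual sets furnished by Namioka's theorem applied in the second variable to the separately continuous map $(x,t)\mapsto (xt)p$. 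Making this feedback force $d(s_nq_n,s_np)\to 0$ for \emph{all} $s$, rather than for a residual set of $s$, is the technical heart of the statement.
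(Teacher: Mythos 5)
Your preliminaries are fine: the identity of $\Gamma$ is indeed a two-sided identity of $S$, Namioka's theorem does apply to $m \colon S \times Sp \to S$ (with $S$ compact metric and $Sp$ compact) and yields a residual $A \subseteq S$ with joint continuity on $A \times Sp$, the set of joint-continuity points is invariant under $x \mapsto \gamma x$ in the first coordinate, and $\Gamma p$ is dense in $Sp$. But these are the routine parts, and your own last paragraph concedes that the proof stops there: the passage from ``joint continuity at $(a,p)$ for a residual, $\Gamma$-invariant set of $a$'' to ``joint continuity at $(s,p)$ for \emph{every} $s$'' is never carried out, and that passage is the entire content of the theorem. The obstruction you name is real and cannot be waved away: in the Ellis-theorem argument one moves an arbitrary point into the residual set by a translation, using transitivity; here the $\Gamma$-orbit of $s$ is dense only in $Ss$ (resp.\ $sS$), which can be meager. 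A concrete witness that your machinery provably falls short: take $S = \Z \cup \{\infty\}$, the one-point compactification of the discrete group $\Z$ with $\infty$ acting as a zero. This is a compact metrizable semitopological semigroup with dense subgroup $\Z$, and for $p = 0$ one has $Sp = S$; multiplication is \emph{not} jointly continuous at $(\infty,\infty)$ (take $x_n = n$, $y_n = -n$), so $\infty$ does not lie in your residual set $A$, yet the theorem's conclusion at $(s,p)=(\infty,0)$ does hold. So the conclusion at points outside $A$ requires an argument that your outline does not contain. Your proposed repair --- Namioka in the second variable for $(x,t) \mapsto (xt)p$, then feeding in the factorization $q_n = t_n p$, $t_n \to t$, $tp = p$ --- hits the same wall: it produces a residual, left-$\Gamma$-invariant set $E$ of good second coordinates, but nothing forces the limit $t$ (or any alternative representative $tu$ with $up = p$; the semigroup $\{u : up = p\}$ can be as small as $\{1,t\}$) to lie in $E$. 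The sentence calling this step ``the technical heart of the statement'' is an admission, not an argument.

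For calibration: the paper itself does not prove this theorem; it is quoted from Ruppert's book \cite{Ruppert1984}*{Chapter~II, Theorem~3.6}, so there is no internal proof to compare against, and a complete proof here would have to reproduce Ruppert's argument, which requires a genuinely further idea beyond the Namioka-plus-translation scheme (for instance, in the special case $p = 1$ one can finish by showing that any subsequential limit $z$ of $s_n q_n$ satisfies $zd = sd$ for all $d$ in a residual set of second coordinates and then cancelling via separate continuity and the identity; making such a comparison work for general $p$, where $Sp$ is a proper left ideal not stable under right translations, is exactly what is missing). As it stands, your proposal establishes the statement only for a residual set of first coordinates $s$, not for all of them, so it is not a proof of the theorem.
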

In the next lemma, we collect several simple consequences of this theorem that we will use.
\begin{lem} \label{l:jcont-conseq}
  Let $p, q \in W$.
  \begin{enumerate}
  \item \label{i:l:jc:1} $q p \equiv_L p$ if and only if $q p \geq_L p$ if and only if $p = q^* q p$.
  \item \label{i:l:jc:3} Assume that $q = x^* y$, where $x,y \in \widehat G_L$. Then $p = qp$ if and only if $xp = yp$.
  \end{enumerate}
\end{lem}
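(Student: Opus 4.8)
The plan is to prove both parts by the same device: feed a suitable constant sequence into the joint continuity theorem, \autoref{th:joint-cont}, which applies because $W$ is a compact metrizable semitopological semigroup containing $G$ as a dense subgroup. Since $W$ is metrizable, I would work throughout with sequences from $G$, using that the image of $G$ is dense. I would also repeatedly use that multiplication is separately continuous and that the involution is continuous on $W$.

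For part \ref{i:l:jc:1} I would first dispose of the formal equivalences. As $W(qp) = (Wq)p \subseteq Wp$, we always have $qp \leq_L p$; hence $qp \equiv_L p$ is the same as $qp \geq_L p$, and under the automatic inequality this says exactly $Wp = W(qp)$. The implication $p = q^*qp \Rightarrow qp \geq_L p$ is immediate, since then $p = q^*(qp) \in W(qp)$. The content is the converse. Assuming $Wp = W(qp)$, I would choose $g_n \in G$ with $g_n \to q$ in $W$; then $g_n^{-1} \to q^*$ by continuity of the involution, and $g_n p \to qp$ by separate continuity. The crucial observation is that $g_n p \in Wp = W(qp)$ by hypothesis, so the pairs $(g_n^{-1}, g_n p)$ converge to $(q^*, qp)$ inside $W \times W(qp)$. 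Applying \autoref{th:joint-cont} at $(q^*, qp)$ gives $g_n^{-1}(g_n p) \to q^*(qp)$; but $g_n^{-1}(g_n p) = (g_n^{-1}g_n)p = p$ for every $n$, whence $q^*qp = p$.

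Part \ref{i:l:jc:3} follows the same pattern. The direction $xp = yp \Rightarrow p = qp$ is a one-line computation using $x^*x = 1_G$: indeed $qp = x^*(yp) = x^*(xp) = (x^*x)p = p$. For the converse, assuming $p = qp = x^*(yp)$, I would pick $g_n \in G$ with $g_n \to x$ in $\widehat G_L$, hence in $W$, so that $g_n^{-1} \to x^*$. Separate continuity gives $g_n^{-1}(yp) \to x^*(yp) = p$, and these points all lie in $Wp$; thus $(g_n, g_n^{-1}(yp)) \to (x, p)$ in $W \times Wp$. Applying \autoref{th:joint-cont} at $(x, p)$ yields $g_n\bigl(g_n^{-1}(yp)\bigr) \to xp$, while the left-hand side equals $(g_ng_n^{-1})(yp) = yp$ for all $n$; hence $xp = yp$.

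The main obstacle, and the reason separate continuity alone does not suffice, is that each conclusion is genuinely a non-commuting double limit of the form $\lim_n g_n\bigl(\lim_m g_m^{-1}\cdots\bigr)$. The whole weight of the argument rests on checking, in each case, that the moving second coordinate lands in the correct principal left ideal — $W(qp)$ in part \ref{i:l:jc:1}, $Wp$ in part \ref{i:l:jc:3} — so that the restricted multiplication map in \autoref{th:joint-cont} can be invoked. In part \ref{i:l:jc:1} this ideal-membership check is precisely the step that consumes the hypothesis $qp \equiv_L p$, and it is where I would expect to spend the most care.
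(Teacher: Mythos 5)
Your proposal is correct, and both parts go through as written. Part \ref{i:l:jc:1} is essentially identical to the paper's argument: choose $g_n \in G$ with $g_n \to q$, observe that $g_n p \in Wp \subseteq W(qp)$ by the hypothesis, and apply Theorem~\ref{th:joint-cont} at the point $(q^*, qp)$ to the telescoping identity $p = g_n^{-1}(g_n p)$. For part \ref{i:l:jc:3}, however, you take a different (though equally valid) route: you re-run the joint-continuity machinery a second time, approximating $x$ by $g_n \in G$, checking that $g_n^{-1}(yp) \to x^*(yp) = p$ inside $Wp$, and telescoping $yp = g_n\bigl(g_n^{-1}(yp)\bigr) \to xp$. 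The paper instead deduces \ref{i:l:jc:3} from \ref{i:l:jc:1} in two lines: since $yp \equiv_L p$ (as $y \in \widehat G_L$) and $p = qp$, one has $x^*(yp) = p \equiv_L yp$, so part \ref{i:l:jc:1} applied with $x^*$ in place of $q$ and $yp$ in place of $p$ gives $yp = xx^*yp$, whence $xp = x(x^*yp) = yp$. The paper's reduction is shorter and exhibits \ref{i:l:jc:3} as a formal consequence of \ref{i:l:jc:1} (via the standing observation that $zp \equiv_L p$ for $z \in \widehat G_L$), while your version is self-contained and has the merit of making explicit, in each case separately, exactly where the ideal-membership hypothesis of Theorem~\ref{th:joint-cont} is consumed --- which, as you rightly stress, is the only non-formal point of the whole lemma. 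Your one-line computation for the easy direction of \ref{i:l:jc:3}, namely $qp = x^*(xp) = (x^*x)p = p$, supplies what the paper leaves implicit.
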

\begin{proof}
  \ref{i:l:jc:1}. Assume $pq \geq_L p$.
  Let $g_n \to q$ with $g_n \in G$. Then $g_n p \to qp$, $g_n p \in Wqp$ and $p = g_n^{-1}g_np \to q^*qp$ by Theorem~\ref{th:joint-cont}.
  The rest is clear.

  \ref{i:l:jc:3}. If $p = qp$ then $x^*yp \equiv_L yp$.
  Applying \ref{i:l:jc:1} we obtain $xp = xx^*yp = yp$.
\end{proof}

An element $e \in W$ is called an \emph{idempotent} if $ee = e$. The following is a well-known fact about idempotents whose proof we include for completeness.
\begin{lem}
  \label{l:idempotents}
  For every idempotent $e \in W$, $e^* = e$;
\end{lem}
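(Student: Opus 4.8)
The plan is to reduce the claim to the single identity $e^* e = e$ and then to extract $e = e^*$ from it, using that the involution reverses products. So first I would aim for $e^* e = e$, and only afterwards perform a short involution computation.

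To obtain $e^* e = e$, I would invoke \autoref{l:jcont-conseq}\ref{i:l:jc:1} with $q = p = e$. Since $e$ is idempotent, $qp = ee = e = p$, so in particular the first condition $qp \equiv_L p$ holds trivially (the two sides are literally equal). The lemma then hands us the equivalent third condition $p = q^* q p$, that is, $e = e^* e e$. Using idempotency once more, $e^* e e = e^*(ee) = e^* e$, and therefore $e = e^* e$.

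It then remains to apply the involution. Because $(pq)^* = q^* p^*$ and $(p^*)^* = p$, we have $(e^* e)^* = e^*(e^*)^* = e^* e$; thus applying $*$ to the identity $e^* e = e$ gives $e^* e = e^*$. Comparing this with $e^* e = e$ yields $e = e^*$, which is exactly the assertion.

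The only substantive input is \autoref{l:jcont-conseq}\ref{i:l:jc:1}, whose proof rests on the joint continuity theorem \autoref{th:joint-cont}; morally its content is that although multiplication in $W$ is merely separately continuous, one may still pass in the limit from the trivial relation $g_n^{-1}(g_n e) = e$ to $e = e^* e e$. Granting that lemma, the argument above is purely algebraic, so there is no real obstacle here; the only point requiring care is the order reversal in the involution when simplifying $(e^* e)^*$, which is precisely what makes the self-adjointness fall out.
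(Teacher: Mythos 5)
Your proof is correct and takes essentially the same route as the paper's: both apply \autoref{l:jcont-conseq}~\ref{i:l:jc:1} with $q = p = e$ (the hypothesis $qp \equiv_L p$ holding trivially since $ee = e$) to get $e = e^*ee = e^*e$, and then apply the involution to this identity to conclude $e^* = e^*e = e$. There is nothing to add.
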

\begin{proof}
  We have $We = Wee$, so, by \autoref{l:jcont-conseq}, $e = e^*ee = e^*e$, and, applying the involution to both sides, $e^* = e^*e$.
\end{proof}

If $e$ is an idempotent, recall that the \emph{maximal group belonging to $e$} is the set
\[
H(e) = \set{p \in W : pe = ep = p \And \exists q, r \in W \ pq = rp = e}.
\]

\begin{lem}
  \label{l:maximal-group}
  \begin{enumerate}
  \item \label{i:l:inv:1} If $p \in H(e)$, then $p^*$ is both a left inverse and a right inverse of $p$, i.e., $pp^* = p^*p = e$.
  \item \label{i:l:inv:2} $H(e)$ is a $G_\delta$ subset of $W$ and therefore a Polish group.
  \end{enumerate}
\end{lem}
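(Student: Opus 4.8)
The plan is to establish part \ref{i:l:inv:1} first and then use it as the key input for part \ref{i:l:inv:2}. For \ref{i:l:inv:1}, the heart of the matter is to show that the abstract group inverse of $p$ in $H(e)$ coincides with the involution $p^*$. First I would manufacture a genuine two-sided inverse out of the one-sided data: given $pq = rp = e$ with $ep = pe = p$, I set $w = re = eq$ (the two expressions agree since $re = r(pq) = (rp)q = eq$). A short computation gives $pw = wp = e$ together with $ew = we = w$, so $w \in H(e)$ is a two-sided inverse, which I denote $p^{-1}$. Next I would observe $Wp \sub Wp^2$ — which holds because $p = ep = p^{-1}p^2 \in Wp^2$ — so that $p^2 \geq_L p$, and apply \autoref{l:jcont-conseq}\ref{i:l:jc:1} with $q = p$ to obtain $p = p^* p p$. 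Right-multiplying by $p^{-1}$ and using $pe = p$ yields $e = (p^*p)e = p^* p$, i.e.\ $p^* p = e$. Finally $p^* = p^* e = p^*(pp^{-1}) = (p^*p)p^{-1} = ep^{-1} = p^{-1}$, whence also $pp^* = pp^{-1} = e$, proving \ref{i:l:inv:1}. Throughout I use $e^* = e$ from \autoref{l:idempotents}.

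For part \ref{i:l:inv:2}, I would first record the description made available by \ref{i:l:inv:1}: a point $p \in W$ lies in $H(e)$ if and only if $ep = pe = p$ and $p^*p = pp^* = e$ (the forward direction is \ref{i:l:inv:1}, and conversely $p^*$ witnesses invertibility). This has two payoffs. On the group side it shows that inversion on $H(e)$ is the restriction of the globally continuous involution $p \mapsto p^*$. For the multiplication I would prove joint continuity on $H(e) \times H(e)$ using \autoref{th:joint-cont}, which applies since $W$ is a compact metrizable semitopological semigroup with dense subgroup $G$: given $p_n \to p$ and $q_n \to q$ in $H(e)$, I set $r_n = q^{-1}q_n$, so that $r_n \to e$ by separate continuity of left multiplication by the fixed $q^{-1} = q^*$, with $r_n \in H(e) \sub We$ and $q_n = qr_n$; then $p_nq_n = (p_nq)r_n$, and since $p_nq \to pq$ while $r_n \to e$ with $r_n \in We$, \autoref{th:joint-cont} at $(pq, e)$ gives $p_nq_n \to (pq)e = pq$. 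Hence $H(e)$ is a topological group.

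It remains to see that $H(e)$ is $G_\delta$ in the Polish space $W$, which I expect to be the main obstacle, precisely because the multiplication of $W$ is only separately continuous, so a set such as $\set{(p,q) : pq = e}$ is not visibly closed or $G_\delta$. The way around this is the classical fact that a separately continuous real-valued function on a product of two metric spaces is of Baire class $1$; applied to $(p,q) \mapsto \rho(pq, e)$ and $(p,q) \mapsto \rho(qp, e)$ for a compatible metric $\rho$ on $W$, it shows that $\set{(p,q) : pq = e}$ and $\set{(p,q) : qp = e}$ are $G_\delta$ in $W \times W$, being preimages of the closed set $\set{0}$ under Baire class $1$ maps. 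Intersecting with the closed set $\set{(p,q) : ep = pe = p}$ produces a $G_\delta$ set $D \sub W \times W$, and the description above says exactly that $H(e) = \psi^{-1}(D)$ for the continuous map $\psi(p) = (p, p^*)$. Therefore $H(e)$ is $G_\delta$, hence Polish, and combined with the previous paragraph it is a Polish group.
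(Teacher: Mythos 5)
Your proof is correct, and while part \ref{i:l:inv:1} follows essentially the paper's route, part \ref{i:l:inv:2} takes a genuinely different one where it matters. For \ref{i:l:inv:1}, both arguments funnel through \autoref{l:jcont-conseq}~\ref{i:l:jc:1}: the paper applies it directly to $pe \geq_L e$ (immediate from $rp = e$ and $pe = p$), getting $e = p^*pe = p^*p$ in one line, whereas you first manufacture the two-sided inverse $w = re = eq$ and apply the lemma to $p^2 \geq_L p$ --- a slightly longer but equivalent computation. For the $G_\delta$ claim in \ref{i:l:inv:2}, the paper works inside $S = \cl{H(e)}$: choosing a symmetric neighbourhood basis $\set{V_n}$ of $e$ in $S$, it sets $U_n = \set{q \in S : Sq \cap V_n \neq \emptyset}$ (open by separate continuity) and proves $H(e) = \bigcap_n (U_n \cap U_n^*)$, the nontrivial inclusion being a compactness argument: a cluster point of approximate left inverses is an exact left inverse. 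You instead use \ref{i:l:inv:1} to rewrite $H(e) = \psi^{-1}(D)$ with $\psi(p) = (p, p^*)$, and invoke the classical Lebesgue--Rudin theorem that separately continuous real functions on a product of metric spaces are of Baire class $1$, so that relations such as $\set{(p,q) : pq = e}$ are $G_\delta$. Both are valid; the paper's argument is entirely self-contained (compactness plus separate continuity and nothing else), while yours imports a nontrivial classical fact but is arguably more conceptual --- it isolates exactly why separate continuity suffices, and the same device shows that any subset of $W$ cut out by equations in $p$, $p^*$, and constants is $G_\delta$. A further point in your favour: the paper disposes of joint continuity of multiplication on $H(e)$ with a bare citation of \autoref{th:joint-cont}, which strictly speaking only gives continuity at points $(s, e)$ of $W \times We$; your translation trick $p_n q_n = (p_n q)(q^* q_n)$, with $q^* q_n \to e$ inside $We$, supplies precisely the missing detail, and your observation that inversion on $H(e)$ is the restriction of the globally continuous involution completes the verification that $H(e)$ is a Polish group.
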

\begin{proof}
  \ref{i:l:inv:1}. This can be found in \cite{Ruppert1984}*{Chapter~III, Corollary~1.8} but the proof is short enough to include here.

  Suppose that $rp = e$. Then $e \equiv_L pe$ and by Lemma~\ref{l:jcont-conseq} \ref{i:l:jc:1}, $e = p^*pe = p^*p$. The argument that $pp^* = e$ is similar.

  \ref{i:l:inv:2}. Let $S = \cl{H(e)}$ and observe that by \ref{i:l:inv:1}, $S = S^*$. Let $\set{V_n}_n$ be a basis of open neighbourhoods of $e$ in $S$ satisfying $V_n^* = V_n$ and let $U_n = \set{q \in S : Sq \cap V_n \neq \emptyset}$. Note that as $U_n = \bigcup_{s \in S} \set{q : sq \in V_n}$, each $U_n$ is open in $S$. We claim that $H(e) = \bigcap_n (U_n \cap U_n^*)$. The $\sub$ inclusion being obvious, we check the other. Let $s_n \in S$ be such that $s_n q \in V_n$, so that $s_n q \to e$. By compactness, we may assume that $s_n$ converges to some $s \in S$, and $s_n q \to sq$, showing that $s$ is a left inverse of $q$. By a symmetric argument, $q$ also has a right inverse and is therefore an element of $H(e)$.

  Finally, multiplication is jointly continuous on $H(e)$ by Theorem~\ref{th:joint-cont}.
\end{proof}

The following lemma is a consequence of the proof of \cite{Uspenskii1998}*{Theorem~3.2} but we provide a proof for completeness.
\begin{lem}
  \label{l:LeastIdempotent}
  Let $S \subseteq W$ be a closed subsemigroup such that $S^* = S$. Then $S$ has a least $\equiv_L$-class and this class contains an idempotent, which is the unique $\equiv_L$-least idempotent in $S$.
\end{lem}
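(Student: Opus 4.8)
The plan is to locate the least $\equiv_L$-class by producing a single distinguished idempotent. First I would use compactness to guarantee the existence of minimal elements below any prescribed one. Fix $q \in S$ and consider the family of left ideals $\set{Wp : p \in S,\ Wp \sub Wq}$, ordered by inclusion; note that each $Wp$ is closed (the map $w \mapsto wp$ is continuous) and contains $p$ (as $W$ has an identity). Given a chain in this family, index the corresponding generators as a net $(p_\alpha)$ directed by reverse inclusion; since $S$ is closed and compact, this net has a cluster point $r \in S$, and for each $\alpha$ the net is eventually in the closed set $Wp_\alpha$, so $r \in \bigcap_\alpha Wp_\alpha \sub Wq$. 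Then $Wr \sub Wp_\alpha$ for all $\alpha$, so $Wr$ is a lower bound inside the family. By Zorn's lemma there is a $\leq_L$-minimal $p \in S$ with $p \leq_L q$ (minimality in the family is minimality in all of $S$, since anything in $S$ below $p$ is again $\leq_L q$).

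Next I would extract an idempotent from such a minimal $p$, and this is exactly where the hypothesis $S^* = S$ is used. Since $p^* \in S$, the element $p^* p$ lies in $S$ and satisfies $p^* p \leq_L p$, so minimality forces $p^* p \equiv_L p$. Applying \autoref{l:jcont-conseq}\ref{i:l:jc:1} with $q = p^*$ (so that the product $q \cdot p$ is $p^*p$) gives $p = (p^*)^* p^* p = p p^* p$, and therefore $e := p^* p$ is idempotent, because $e^2 = p^*(p p^* p) = p^* p = e$. Moreover $e \equiv_L p$: indeed $e \leq_L p$, and $p = p e \in We$ gives $p \leq_L e$. Thus every $\leq_L$-minimal element of $S$ is $\equiv_L$ to an idempotent of $S$, and the idempotent produced from a minimal $p \leq_L q$ satisfies $e \leq_L q$.

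The heart of the argument, and the step I expect to be the main obstacle, is uniqueness: any two $\leq_L$-minimal idempotents $e, f \in S$ coincide. Here I would exploit that every idempotent of $W$ is self-adjoint (\autoref{l:idempotents}). Since $ef \leq_L f$ and $f$ is minimal, $ef \equiv_L f$, so \autoref{l:jcont-conseq}\ref{i:l:jc:1} yields $f = e^* e f = e e f = e f$; symmetrically $e = f^* f e = f e$. Applying the involution to $ef = f$ and using $(ef)^* = f^* e^* = f e$, we get $f = f^* = (ef)^* = fe = e$. With uniqueness established, leastness follows formally: for an arbitrary $q \in S$ the minimal $p \leq_L q$ from the first paragraph yields an idempotent $e_q \equiv_L p \leq_L q$, and $e_q$ must equal the unique minimal idempotent $e$, whence $e \leq_L q$ for every $q \in S$. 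Consequently the $\equiv_L$-class of $e$, which equals $S \cap We$ since $e$ is $\leq_L$-below everything, is the least $\equiv_L$-class; it contains the idempotent $e$; and any idempotent lying in it is $\leq_L$-minimal and hence equal to $e$, so $e$ is the unique $\equiv_L$-least idempotent of $S$.
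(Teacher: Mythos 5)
Your proof is correct and follows essentially the same route as the paper's: compactness plus Zorn's Lemma to produce a $\leq_L$-minimal $p \in S$, the observation that $e = p^*p$ is then a minimal idempotent (via Lemma~\ref{l:jcont-conseq}), and uniqueness of the minimal idempotent via self-adjointness (Lemma~\ref{l:idempotents}). The only difference is that you spell out the details the paper leaves implicit, namely the Zorn chain argument using closed left ideals and the final step upgrading ``minimal'' to ``least''.
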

\begin{proof}
  By a combination of compactness and Zorn's Lemma, there exists $p \in S$ which is $\leq_L$-minimal. It follows that $p^* p \equiv_L p$, whereby $p = p p^* p$, so $e = p^* p$ is an idempotent which is $\leq_L$-minimal in $S$.

  Now suppose that $f \in S$ is another idempotent element which is $\leq_L$-minimal. Then $ef \equiv_L f$, so $f = e^* e f = ef$, and similarly $e = fe$. But then $e = e^* = e^* f^* = ef = f$.
\end{proof}

Say that an idempotent $e \in W$ is \emph{central} if $ew = we$ for all $w \in W$.
\begin{lem}
  \label{l:central-idemp}
  Suppose that $G$ is a Roelcke precompact Polish group that satisfies $R(G) = W(G)$.
  Let $e \in W(G)$ be a central idempotent, and let $K = \{p \in W(G) : pe = e\}$.
  If $p = x^* y \in K$ with $x, y \in \widehat G_L$, then there exists $q \in K$ satisfying $q x = y$.
\end{lem}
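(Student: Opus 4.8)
The plan is to realize $q$ through the complex amalgamation Lemma~\ref{lem:ComplexAmalgamation}, which is available because the hypothesis $R(G) = W$ identifies $\widehat G_{L\wedge R}$ with $W$, so that $e$ lies in $\widehat G_{L\wedge R}$. First I would rewrite the hypothesis $p = x^* y \in K$: by Lemma~\ref{l:jcont-conseq}\ref{i:l:jc:3} we have $pe = e \iff xe = ye$, and since $e$ is central this is the same as $ex = ey$. This is exactly the input required by Lemma~\ref{lem:ComplexAmalgamation} applied to the element $e$ and the pair $x, y$, which therefore produces $w, u, v \in \widehat G_L$ with $w^* u = w^* v = e$ and $uy = vx$.

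I would then set $q := u^* v$. That $qx = y$ is a one-line computation using associativity, the relation $uy = vx$, and $u^* u = 1_G$:
\[
qx = u^*(vx) = u^*(uy) = (u^* u)\,y = y.
\]
The substantive point is to verify $q \in K$, that is $qe = e$. Applying Lemma~\ref{l:jcont-conseq}\ref{i:l:jc:3} to $q = u^* v$ reduces this to the single equality $ue = ve$, which I would obtain by showing $ue = we = ve$. Here the idempotency of $e$ is the key: since $w^* u = e$ and $ee = e$, the trivial identity $e = (w^* u)\,e$ holds, and reading Lemma~\ref{l:jcont-conseq}\ref{i:l:jc:3} now with $q = w^* u$ and $p = e$ converts it into $we = ue$; the same step with $w^* v = e$ gives $we = ve$. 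Combining the two yields $ue = ve$, hence $qe = e$.

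I expect the only real obstacle to be this verification that $q \in K$. Trying to force $ue = ve$ by manipulating the relations $w^* u = w^* v = e$ directly leads to uncontrolled products such as $ww^*$ or $uw^*$ that cannot be simplified in a mere semigroup; the trick that circumvents them is to cancel nothing and instead feed the idempotent $p = e$ into Lemma~\ref{l:jcont-conseq}\ref{i:l:jc:3}, letting $ee = e$ do the work. Centrality of $e$ enters only once, in the opening reformulation $xe = ye \iff ex = ey$ that makes the amalgamation lemma applicable; everything after that is formal.
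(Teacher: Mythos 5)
Your proposal is correct and follows essentially the same route as the paper: reformulate $pe=e$ as $xe=ye$ via Lemma~\ref{l:jcont-conseq}~\ref{i:l:jc:3}, use centrality to get $ex=ey$, apply Lemma~\ref{lem:ComplexAmalgamation} to $e$, set $q=u^*v$, and compute $qx=u^*uy=y$. The only (immaterial) difference is in the final check that $qe=e$: the paper gets it from $ve=we$ together with $u^*w=(w^*u)^*=e^*=e$ (using self-adjointness of idempotents), whereas you apply Lemma~\ref{l:jcont-conseq}~\ref{i:l:jc:3} symmetrically to both relations $w^*u=e$ and $w^*v=e$ to obtain $ue=we=ve$ and then invoke its converse direction --- both are equally valid.
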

\begin{proof}
  By Lemma~\ref{l:jcont-conseq} \ref{i:l:jc:3}, $x^* y e = e$ implies $ex = xe = ye = ey$.
  Using Lemma~\ref{lem:ComplexAmalgamation}, we obtain $w,u,v \in \widehat G_L$ such that $w^* u = w^* v = e$ and $uy = vx$.
  Letting $q = u^* v$, we have $qx = u^*vx = u^*uy = y$.
  Also, $w^* v e = e$, so $ve = we$ by Lemma~\ref{l:jcont-conseq} \ref{i:l:jc:3}, and $q e = u^* v e = u^* w e = e$.
\end{proof}

\begin{lem}
  \label{lem:ApproximatelyInvertible}
  Let $G$ be a Roelcke precompact Polish group, $R = R(G)$.
  For an open set $1 \in V \subseteq R$, let $U_V = \{ q \in R : Gq \cap V \neq \emptyset \}$.
  Then $\bigcap_V U_V = \widehat G_L$ and $\bigcap_V (U_V \cap U_V^*) = G$.
\end{lem}
\begin{proof}
  The inclusion $\widehat{G}_L \subseteq U_V$ for all $V$ follows from the density of $G x$ in $\widehat{G}_L$ for all $x \in \widehat{G}_L$.
  Conversely, assume that $q = x^* y \in \bigcap_V U_V$ with $x,y \in \widehat{G}_L$.
  Then there exist $g_n \in G$ such that $g_n^{-1} x^* y = (x g_n)^* y \rightarrow 1$. By passing to a subsequence, we may assume that $xg_n \to z \in R$ and then, by continuity, $z^*y = 1$. Let $h_m \to z$ with $h_m \in G$. Then $h_m^{-1}y \to 1$, implying that $d_L(h_m^{-1}y, 1) \to 0$ and $h_m \to y$. Therefore $x g_n \rightarrow y$.
  Since $g \mapsto xg$ is isometric in $d_L$, $(g_n)_n$ is Cauchy, say $g_n \rightarrow z \in \widehat G_L$.
  We obtain that $xz = y$, so $q = x^*y = z \in \widehat{G}_L$.
  
  It follows that $\bigcap_V (U_V \cap U_V^*) = \widehat{G}_L \cap \widehat{G}_L^* = G$.
\end{proof}

\begin{lem}
  \label{l:K-dense-subgroup}
  Suppose that $G$ is a Roelcke precompact Polish group that satisfies $R(G) = W(G)$.
  Let $e$ be a central idempotent in $W(G)$ and $K = \set{p \in W(G) : pe = e}$. Then $G \cap K$ is dense in $K$.
\end{lem}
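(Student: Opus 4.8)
The plan is to work entirely inside $W = W(G) = R(G)$ and to reduce the density assertion to an \emph{exact matching} problem that is then solved by amalgamation. First note that $G \cap K = \set{g \in G : ge = e}$ is a subgroup of $G$: it is closed under products because $e$ is central and idempotent, $(g_1g_2)e = g_1(g_2e) = g_1e = e$, and $ge = e$ gives $g^{-1}e = g^{-1}(ge) = e$. Since $W$ is compact metrizable, it suffices to approximate a fixed $p \in K$ by elements of $G \cap K$ in the Roelcke metric $d_W$. Write $p = x^* y$ with $x, y \in \widehat G_L$, as we may. Identifying $R(G) = \widehat G_L^2 \sslash G$ as in \autoref{lem:DenseTuple} and computing the distance from $p = [x,y]$ to the point $[1_G, g]$ representing $g \in G$, we get $d_W(p, g) \le \max\bigl(d_L(x, h), d_L(y, hg)\bigr)$ for every $h \in G$ (take the representative $(h, hg)$ of the orbit of $(1_G,g)$). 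Hence, writing $g = h^{-1}k$, it is enough to produce $h_n, k_n \in G$ with $h_n \to x$ and $k_n \to y$ in $\widehat G_L$ and with $h_n^{-1}k_n \in K$, i.e.\ $h_ne = k_ne$; then $g_n = h_n^{-1}k_n \in G \cap K$ and $d_W(p, g_n) \le \max(d_L(x,h_n),d_L(y,k_n)) \to 0$.

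Next I characterise membership in $K$. Applying \autoref{l:jcont-conseq}\,\ref{i:l:jc:3} to the element $x^* y$ and the equation $e = (x^*y)e$, the condition $p \in K$ (that is $pe = e$) is equivalent to $xe = ye$; since $e$ is central this is the same as $ex = ey$. Now \autoref{l:central-idemp} furnishes $q \in K$ with $qx = y$, so that $p = x^* q x$. Choosing $h_n \in G$ with $h_n \to x$ in $\widehat G_L$ and setting $t_n = h_n^{-1} y \in \widehat G_L$, the task becomes: find $g_n \in G \cap K$ with $g_n \to t_n$ in $d_L$, for then $k_n := h_n g_n \to y$ and $h_n^{-1}k_n = g_n$ is as required. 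The point of this reformulation is that the targets $t_n$ lie \emph{approximately} in $K$: using $ye = xe$ and that $h_n^{-1}x = h_n^* x \to x^* x = 1_G$ in $W$, associativity and separate continuity of multiplication give $t_ne = h_n^{-1}(ye) = h_n^{-1}(xe) = (h_n^{-1}x)e \to e$, while the images of $t_n$ in $W$ are $h_n^* y \to x^* y = p$.

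The crux, and the step I expect to be the main obstacle, is the passage from this approximate membership to exact membership: given $s \in \widehat G_L$ (in practice a group element, or the point $t_n$) whose $e$-part $se$ is close to $e$, one must find a genuine element of $G \cap K$ that is $d_L$-close to $s$. Equivalently, one must show that the orbit map $s \mapsto se$ is open at the coset of the identity onto its image, so that a small perturbation of the value $e$ is realised by a small perturbation of $s$ inside the fibre $G \cap K$. This is exactly the homogeneity content of the lemma, and it is where \autoref{lem:ComplexAmalgamation} does the work: writing $e$ in the form $a^* b$ and feeding the relation $ex = ey$ (respectively the approximate relation coming from $t_ne \to e$) into complex amalgamation produces exact witnesses $w, u, v \in \widehat G_L$ realigning the $e$-part, which one then transfers back to $G$ using density of $G$ in $\widehat G_L$.

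Carrying this out along the sequence $t_n$ yields $g_n \in G \cap K$ with $g_n \to t_n$ in $d_L$, hence $h_n \to x$, $k_n = h_ng_n \to y$ with $h_ne = k_ne$, and therefore $g_n = h_n^{-1}k_n \in G \cap K$ with $g_n \to p$, proving $K \subseteq \overline{G \cap K}$. I anticipate that the only delicate bookkeeping is keeping the $d_L$-errors and the $e$-part corrections simultaneously under control along the sequence; conceptually this is a faithful translation of the model-theoretic fact that two elementary self-embeddings agreeing on the $\emptyset$-definable closed set coded by $e$ can be approximated by automorphisms agreeing there, which is precisely the amalgamation property packaged in \autoref{lem:ComplexAmalgamation} and \autoref{l:central-idemp}.
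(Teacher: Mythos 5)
Your setup is sound as far as it goes: the equivalence $p = x^* y \in K \Iff xe = ye$ (via \autoref{l:jcont-conseq}~\ref{i:l:jc:3}), the appeal to \autoref{l:central-idemp}, and the observation that it suffices to produce $h_n, k_n \in G$ with $h_n \to x$, $k_n \to y$ in $d_L$ and $h_n e = k_n e$ are all correct --- indeed, unpacking $d_W$, that last statement is just a restatement of the lemma. But the argument stops exactly where the lemma's difficulty begins. The crux step --- given $s \in \widehat G_L$ with $se$ close to $e$, produce $g \in G \cap K$ with $d_L(g,s)$ small --- is asserted, not proved. Saying that \autoref{lem:ComplexAmalgamation} ``does the work'' and that one ``transfers back to $G$ using density of $G$ in $\widehat G_L$'' does not close it: \autoref{lem:ComplexAmalgamation} requires an \emph{exact} identity $px = py$ as input, so it cannot be fed the approximate relation $t_n e \to e$; and density of $G$ in $\widehat G_L$ only yields automorphisms with $ge$ \emph{close} to $e$, never $ge = e$ --- which is precisely the approximate-to-exact passage you are trying to establish, so the appeal is circular. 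Your reformulation as openness of the orbit map $s \mapsto se$ at the identity is an Effros-type statement that is nowhere justified. There is also a secondary over-demand hidden in the reduction: since $d_L$ is only left-invariant, solvability of the approximation problem for one sequence $h_n \to x$ does not transfer to another, so fixing $h_n$ arbitrarily and then solving for $g_n$ asks for more than the lemma unless $h_n$ is chosen adaptively, which you do not address.

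The paper's proof indicates why no single application of amalgamation can suffice. It proves only a much weaker local statement --- that for each symmetric neighbourhood $V \ni 1$ the open set $U_V = \set{q \in W : Wq \cap V \neq \emptyset}$ meets $K$ densely --- and this is where \autoref{l:central-idemp} and the joint-continuity \autoref{th:joint-cont} enter, via the conjugation $q' = g^{-1} q g$ with $g \in G$ close to $x$. It then invokes the Baire category theorem in the compact set $K$, exactly as in the proof of \autoref{l:maximal-group}, to intersect countably many such conditions and extract elements of $K$ possessing left and right inverses, i.e.\ elements of $G \cap K$. This iteration is essential: as the paper remarks in Section~\ref{sec:model-theor-viewp}, the model-theoretic content is an elementary chain argument in which the definable set coded by $e$ grows at each step, and the Baire argument is its topological substitute, with the single amalgamation step subsumed in \autoref{l:central-idemp}. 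Your proposal in effect tries to compress the entire chain into one amalgamation step; that is a genuine gap, not a bookkeeping issue.
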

\begin{proof}
  For an open set $1 \in V \sub W$, let
  \begin{equation*}
    U_V = \{q \in W : Gq \cap V \neq \emptyset\} =
    \{q \in W : Wq \cap V \neq \emptyset\}.
  \end{equation*}
  (The equality holds because $G$ is dense in $W$.)
  By \autoref{lem:ApproximatelyInvertible} and the Baire category theorem, it suffices to show that for every open $V \ni 1$, $U_V \cap K$ and $U_V^* \cap K$ are dense in $K$.

  Let $U \subseteq W$ be open such that $K \cap U \neq \emptyset$, say $p = x^* y \in K \cap U$, with $x, y \in \widehat G_L$.
  Let $q \in K$ satisfy $qx = y$ as per \autoref{l:central-idemp}.
  Since $(qx)^* q x = y^* y \in V$ and $Wx = W$, by \autoref{th:joint-cont}, there exists an open neighbourhood $V_0 \ni x$ such that $V_0^* q^* q V_0 \subseteq V$.
  Similarly, as $x^* q x \in U$, there exists an open neighbourhood $U_0 \ni x$ such that $U_0^* q U_0 \subseteq U$.
  Let $g \in V_0 \cap U_0 \cap G$ and $q' = g^{-1} q g$.
  Then $q' \in U$, $q' \in K$ (as $e$ is central), and $q'^* q' = g^{-1} q^* q g \in V$ implies that $q' \in U_V$ (because $(g^{-1}q^*g)q' = g^{-1}q^*g g^{-1}qg = g^{-1}q^*qg \in V$), as desired.

  The argument that $U_V^* \cap K$ is dense is symmetric.
\end{proof}

The following proposition is the main result of this section and will be key for the proof of our main theorem in Section~\ref{sec:minimality-topology}.
\begin{prp}
  \label{p:open-map}
  Let $G$ be a Roelcke precompact Polish group satisfying $R(G) = W(G)$. Suppose that $e$ is a central idempotent in $W = W(G)$ and let $K = \set{p \in W : pe = e}$. Then $N = K \cap G$ is a closed normal subgroup of $G$ and the map $\pi \colon G/N \to H(e)$, $gN \mapsto ge$ is an isomorphism between $G/N$ and $H(e)$.
\end{prp}
\begin{proof}
  As $G/N$ and $H(e)$ are Polish groups and $\pi$ has a dense image, it suffices to check that $g_k e \to e$ implies that $g_kN \to N$ for any sequence $\set{g_k}_k \sub G$. By passing to a subsequence, we may assume that $g_k \to w \in K$. By Lemma~\ref{l:K-dense-subgroup}, there also exists a sequence $\set{n_k}_k \sub N$ with $n_k \to w$. By the definition of the Roelcke uniformity \eqref{eq:dRoelcke}, there exists a sequence $\set{f_k}_k \sub G$ such that $d_R(g_k, f_k n_k f_k^{-1}) \to 0$, showing that $g_kN \to N$.
\end{proof}

\section{WAP group topologies and minimality}
\label{sec:minimality-topology}

In this section, we study continuous homomorphisms defined on Roelcke precompact Polish groups. The following definition captures the notion of a quotient in the category of Polish groups.
\begin{dfn}
  Let $G$ be a Polish group. A \df{quotient} of $G$ is a Polish group $G'$ together with a continuous homomorphism $\pi \colon G \to G'$ such that $\pi(G)$ is dense in $G'$. If $\pi' \colon G \to G'$ and $\pi'' \colon G \to G''$ are quotients of $G$, say that $G''$ is a quotient of $G'$ if there exists a continuous homomorphism $\rho \colon G' \to G''$ such that $\pi'' = \rho \circ \pi$. We say that a quotient $(G', \pi)$  of $G$ is \emph{injective} or \emph{surjective} if the map $\pi$ is.
\end{dfn}
For example, if $N \unlhd G$ is a closed normal subgroup, then $G/N$ is a Polish group and $G \to G/N$ is a surjective quotient; conversely, every surjective quotient is of this type. Every quotient $\pi \colon G \to G'$ factors as a composition $G \to G/\ker \pi \to G'$, the first map being surjective and the second injective. It is an easy consequence of the definition that a quotient of a Roelcke precompact group is Roelcke precompact (see, e.g., \cite{Tsankov2012}*{Proposition~2.2}).

\begin{exm}
  Let $\Homeo^+([0, 1])$ denote the group of orientation-preserving homeomorphisms of the interval; equipped with the uniform convergence topology, it is a Polish group. If we view the interval as the set of Dedekind cuts of the rationals, we see that every order-preserving automorphism $\Aut(\Q)$ induces a homeomorphism of $[0, 1]$; this defines a homomorphism $\Aut(\Q) \to \Homeo^+([0, 1])$ with a dense image and thus we see that $\Homeo^+([0, 1])$ is an (injective) quotient of $\Aut(\Q)$.
\end{exm}
As $\Aut(\Q)$ is a Roelcke precompact Polish group (Example~\ref{ex:Q}), this example provides a negative answer to Question~4.41 of Dikranjan--Megrelishvili~\cite{Dikranjan2013}, who asked whether every Roelcke precompact Polish group is minimal.

A topological group $G$ is called \df{totally minimal} if every continuous surjective homomorphism from $G$ to a Hausdorff topological group is an open map. If a Polish group $G$ is totally minimal, then every continuous homomorphic image of $G$ in another Polish group is closed, and in fact, total minimality is equivalent to this.
\begin{prp}
  \label{p:minimal-Polish}
  Let $G$ be a Roelcke precompact Polish group. Then $G$ is totally minimal if and only if every Polish quotient of $G$ is surjective.
\end{prp}
\begin{proof}
  If $G$ is totally minimal and $\pi \colon G \to G'$ is a quotient, then $\pi$ is an open map on its image, so $\pi(G)$ is a Polish group, which, being dense in $G'$, must coincide with $G'$.

  Conversely, suppose that every Polish quotient of $G$ is surjective and let $\pi \colon G \to H$ be a continuous surjective homomorphism to the Hausdorff topological group $H$. Then $H$ is also Roelcke precompact and $\pi$ extends to a continuous surjective map $R(G) \to R(H)$. As $R(G)$ is metrizable, $R(H)$ also is and, as the inclusion $H \hookrightarrow R(H)$ is a homeomorphic embedding, $H$ is also metrizable. The group $H$, being the continuous image of the separable group $G$, is separable and therefore the two-sided completion $\widehat H$ is a Polish group. By our hypothesis, the composition $G \xrightarrow{\pi} H \to \widehat H$ is surjective, showing that $H$ is Polish and $\pi$ is open.
\end{proof}

Say that a topological group $G$ is \df{WAP faithful} if WAP functions on $G$ separate points from closed sets (or, equivalently, the compactification map $G \to W(G)$ is a homeomorphic embedding). Those groups are sometimes called \emph{reflexively representable} because they are exactly the groups that admit topologically faithful representations by isometries on a reflexive Banach space \cite{Megrelishvili2001b}. Say that a quotient $G'$ of $G$ is a \df{WAP quotient} if it is WAP faithful. The following lemma is standard.

\begin{lem}
  \label{l:Rp-quotients}
  Let $G$ be Polish Roelcke precompact group that satisfies $R(G) = W(G)$. Then every quotient $G'$ of $G$ also satisfies $R(G') = W(G')$ and is therefore a WAP quotient.
\end{lem}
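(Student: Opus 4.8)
The plan is to reduce the equality $R(G')=W(G')$ to a statement about functions and then transfer Grothendieck's double-limit criterion across the quotient map $\pi\colon G\to G'$. First note that $G'$ is Roelcke precompact, being a quotient of $G$ (\cite{Tsankov2012}*{Proposition~2.2}), so $W(G')$ and the canonical continuous surjection $R(G')\to W(G')$ are defined. Saying $R(G')=W(G')$ is exactly saying that this surjection is injective, i.e.\ that \emph{every} Roelcke uniformly continuous function on $G'$ is weakly almost periodic. The final clause of the lemma then comes for free: the canonical map $G'\to R(G')$ is always an isometric (hence homeomorphic) embedding for the Roelcke uniformity (as recorded in the discussion around \autoref{lem:DenseTuple}), so once $R(G')=W(G')$ the compactification map $G'\to W(G')$ is an embedding and $G'$ is WAP faithful.

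So let $f$ be a Roelcke uniformly continuous function on $G'$ and set $F=f\circ\pi$. First I would check that pullback along $\pi$ preserves Roelcke uniform continuity: since $\pi$ is a continuous homomorphism, it is uniformly continuous for both the left and the right uniformities, so $F$ is left and right uniformly continuous on $G$, hence Roelcke uniformly continuous. By the hypothesis $R(G)=W(G)$, the function $F$ is weakly almost periodic on $G$.

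It remains to deduce that $f$ itself is weakly almost periodic, which I would do via \autoref{th:GrothendieckGroups}. Fix sequences $\set{g_n}_n,\set{h_m}_m\sub G'$ for which both iterated limits $A=\lim_n\lim_m f(g_nh_m)$ and $B=\lim_m\lim_n f(g_nh_m)$ exist; the goal is $A=B$. Fix $\eps>0$ and, using that $f$ is both left and right uniformly continuous, choose a symmetric neighbourhood $V\ni 1_{G'}$ with $|f(x)-f(y)|<\eps$ whenever $x\in Vy$ or $x\in yV$. Since $\pi(G)$ is dense, I may pick $a_n,b_m\in G$ with $\pi(a_n)\in Vg_n$ and $\pi(b_m)\in h_mV$. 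Applying right uniform continuity in the first coordinate and left uniform continuity in the second yields
\[
|F(a_nb_m)-f(g_nh_m)|=|f(\pi(a_n)\pi(b_m))-f(g_nh_m)|<2\eps\quad\text{for all }n,m.
\]
The point of using a single $\eps$, rather than letting the approximation improve with $n$, is precisely to make this bound uniform in \emph{both} indices.

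Finally I would pass, by the standard diagonal extraction of subsequences from a bounded double sequence, to subsequences along which both iterated limits of $F(a_nb_m)$ exist; since $F$ is weakly almost periodic, \autoref{th:GrothendieckGroups} forces these to share a common value $C$. Along the same subsequences the iterated limits of $f(g_nh_m)$ remain $A$ and $B$ (being sub-sequential limits of the convergent sequences $c_n=\lim_m f(g_nh_m)$ and $d_m=\lim_n f(g_nh_m)$). Taking iterated limits in the displayed inequality then gives $|C-A|\le 2\eps$ and $|C-B|\le 2\eps$, whence $|A-B|\le 4\eps$; as $\eps$ was arbitrary, $A=B$. I expect this last transfer to be the main obstacle: one cannot simply sharpen the approximation of $g_n,h_m$ as $n,m\to\infty$ (the error from approximating $g_n$ on the left does not decay as $m\to\infty$), so the argument must fix $\eps$, obtain a genuinely uniform approximation, extract subsequences so that Grothendieck's criterion applies to $F$, and only then let $\eps\to 0$.
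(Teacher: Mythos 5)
Your proof is correct and follows the same route as the paper's: pull the function back along $\pi$, use $R(G)=W(G)$ to conclude the pullback is WAP on $G$, and transfer WAP-ness back to $G'$ via density of $\pi(G)$. The paper's proof is a three-line sketch that simply asserts the final transfer (``hence WAP on $G$, and therefore WAP on $G'$''), and your $\eps$-uniform approximation together with the subsequence extraction for Grothendieck's double-limit criterion is a correct filling-in of exactly that implicit step.
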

\begin{proof}
  Let $\pi \colon G \to G'$ be a quotient of $G$. Then $G'$ is also Roelcke precompact and if $f$ is a uniformly continuous function on $G'$, $f \circ \pi$ is uniformly continuous on $G$, hence WAP on $G$, and therefore WAP on $G'$. As uniformly continuous functions always separate points from closed sets, we obtain the second conclusion of the lemma.
\end{proof}

Let $G$ be a topological group. Define a partial preorder on the set of quotients of $G$ as follows: $(G_1, \pi_1) \prec (G_2, \pi_2)$ if there exists a compact normal subgroup $K \unlhd G_1$ and a continuous homomorphism $\rho \colon G_2 \to G_1 / K$ such that $\rho \circ \pi_2 = \sigma \circ \pi_1$, where $\sigma \colon G_1 \to G_1 / K$ denotes the factor map. If $G_2 \prec G_1$ and $G_3 \prec G_2$ as witnessed by $\rho_1 \colon G_1 \to G_2 / K_2$ and $\rho_2 \colon G_2 \to G_3/K_3$, then $\rho_2' \circ \rho_1 \colon G_1 \to G_3 / (K_3 \rho_2(K_2))$, where $\rho_2' \colon G_2/K_2 \to G_3 / (K_3 \rho_2(K_2))$ is the factor of $\rho_2$, witnesses that $G_3 \prec G_1$. Say that two quotients $G_1$ and $G_2$ of $G$ are \emph{$\sim$-equivalent} if $G_1 \prec G_2$ and $G_2 \prec G_1$. Let $\mcQ(G)$ denote the set of $\sim$-equivalence classes of WAP quotients of $G$. Then $\prec$ naturally descends to a partial order on $\mcQ(G)$.

Let $\pi \colon G \to G'$ be a quotient of $G$.
By the universal property of $W(G)$, $\pi$ extends to a homomorphism $\overline \pi\colon W(G) \to W(G')$.
Then $\overline \pi^{-1}(1_{G'})$ is a closed subsemigroup of $W(G)$ stable under the involution and by \autoref{l:LeastIdempotent}, has a least idempotent; define $E(G')$ to be this least idempotent.
It is clear that $E(G')$ is central in $W(G)$. Observe that if $G_2$ is a quotient of $G_1$, then $E(G_2) \leq_L E(G_1)$.

Let $e$ be a central idempotent in $W = W(G)$. Recall from Section~\ref{sec:wap-compactification} that
\[
H(e) = \set{h \in W : he = eh = h \And h h^* = h^*h = e}
\]
is a Polish group with identity $e$.
Let $\overline \pi_e \colon W \to We$ denote the homomorphism $p \mapsto pe$, and let $\pi_e\colon G \rightarrow H(e)$ denote its restriction to $G$.
Since $G$ is dense in $W$, $\pi_e(G) \sub H(e)$ is dense in $We$, i.e., $We = \cl{H(e)}$.
A fortiori $\pi_e(G)$ is dense in $H(e)$, that is, $H(e)$ is a quotient of $G$.
We will show below that one obtains all WAP quotients of $G$, up to $\sim$, in that way.

The following proposition is based on ideas of Ruppert~\cite{Ruppert1990}.
\begin{prp}
  \label{p:corr-quot}
  Let $G$ be a Polish Roelcke precompact group.
  Then the map $e \mapsto H(e)$ gives an isomorphism between the set of central idempotents in $W(G)$ ordered by $\leq_L$ and $(\mcQ(G), \prec)$, with inverse $[G'] \mapsto E(G')$ (where $[G']$ denotes the class of $G'$ modulo $\sim$).
  Moreover, every $G' \sim H(e)$ is isomorphic to $H(e)/K$ for some compact normal $K \leq H(e)$.
\end{prp}
\begin{proof}
  First, let us check that the map $E$ is well-defined on $\mcQ(G)$.
  Indeed, assume that $G_1$ is a quotient of $G$ and $K_1 \unlhd G_1$ is compact, and consider the following maps:
  \[
  W(G) \xrightarrow{\ \overline \pi\ } W(G_1) \xrightarrow{\ \rho\ } W(G_1 / K_1).
  \]
  We verify that $\rho^{-1}(1_{G_1 / K_1}) = K_1$. Let $p \in W(G_1)$ be such that $\rho(p) = 1_{G_1 / K_1}$ and let $g_n \to p$, $g_n \in G_1$. Then $\rho(g_n) \to 1_{G_1 / K_1}$ and there exists a sequence $\set{k_n}_n \sub K_1$ such that $g_n k_n \to 1_{G_1}$.
  Since $K_1$ is compact, we may assume that $k_n \to k \in K_1$, so that $p = k^{-1} \in K_1$. Now suppose that $e \in (\rho \circ \overline \pi)^{-1}(1_{G_1/K_1}) = \overline \pi^{-1}(K_1)$ is an idempotent. Then $\overline \pi(e) \in K_1$ is also an idempotent, so $\overline \pi(e) = 1_{G_1}$, whence $e \in \overline \pi^{-1}(1_{G_1})$.
  Therefore $E(G_1) = E(G_1 / K_1)$.

  We next check that $H(E(G')) \sim G'$ for any WAP quotient $\pi \colon G \to G'$.
  Indeed, let $e = E(G')$, $S = \overline \pi^{-1}(1_{G'})$ and $K = S \cap We$.
  Let $k \in K$. As $k \in We$, $k \leq_L e$ and as $e$ is a $\leq_L$-least element of $S$, $k$ is $\leq_L$-least, too. In particular, $k \equiv_L ek$ and by Lemma~\ref{l:jcont-conseq}, $k = e^* e k = ek = ke$.
  Applying Lemma~\ref{l:jcont-conseq} again, from $ke \equiv_L e$, we obtain that $k^* k = k^* k e = e$. By a symmetric argument, $k k^* = e$, showing finally that $k \in H(e)$.
  We conclude that $K$ is a compact subgroup of $H(e)$. It is also normal because $S$ is invariant under conjugations by $G$ (and therefore, by $H(e)$, the set $\set{ge : g \in G}$ being dense in $H(e)$).
  Next we show that $\tilde \pi = \overline \pi |_ {H(e)}$ is a surjective homomorphism onto $G'$. As $\ker \tilde \pi = K$ and $H(e)/K$ and $G'$ are both Polish groups, it suffices to check that for any sequence $\set{h_n}_n \sub H(e)$, if $\tilde \pi(h_n) \to 1_{G'}$, then $h_nK \to K$. Let $h_n \to s \in W$. Then $s \in S \cap We = K$, and we are done.

  Next we verify that $E(H(e)) = e$ for every central idempotent $e \in W$. By the universal property of $W(G)$ and $W(H(e))$, the maps $G \xrightarrow{\pi_e} H(e) \hookrightarrow We$ give rise to maps
  \[
  W \longrightarrow W(H(e)) \longrightarrow We
  \]
  whose composition is necessarily $\overline \pi_e\colon W \rightarrow We$.
  Since $\overline \pi_e |_{We}$ is the identity, the map $W(H(e)) \to We$ is an isomorphism.
  It only remains to observe that $e$ is a $\leq_L$-least element in the semigroup $S = \set{w \in W : we = e}$.

  Finally, we note that $e \mapsto H(e)$ is order-preserving. Indeed, if $e_1 \leq_L e_2$, then the map $H(e_2) \to H(e_1)$, $h \mapsto he_1$ is a quotient map.
\end{proof}

Now we have all of the ingredients to prove one of our main theorems.
\begin{thm}
  \label{th:min-topology}
  Let $G$ be a Roelcke precompact Polish group such that $R(G) = W(G)$. Then $G$ is totally minimal.
\end{thm}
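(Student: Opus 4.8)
The plan is to deduce total minimality from \autoref{p:minimal-Polish}, which reduces the statement to showing that \emph{every Polish quotient of $G$ is surjective}. So I would fix a quotient $\pi \colon G \to G'$ with $G'$ Polish and $\pi(G)$ dense, and try to prove that $\pi$ is onto. The first move is to place $G'$ inside the framework of WAP quotients: since $R(G) = W(G)$, \autoref{l:Rp-quotients} yields $R(G') = W(G')$, so $G'$ is WAP faithful and hence a WAP quotient of $G$.

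Next I would route $\pi$ through the maximal-group machinery of \autoref{p:corr-quot}. Let $e = E(G')$ be the central idempotent attached to $G'$, and let $\overline\pi \colon W(G) \to W(G')$ be the canonical extension of $\pi$. The proof of \autoref{p:corr-quot} supplies a surjective homomorphism $\tilde\pi = \overline\pi|_{H(e)} \colon H(e) \to G'$ (with compact kernel). On the other hand, because $R(G) = W(G)$, \autoref{p:open-map} applies to the central idempotent $e$ and shows that the map $\pi_e \colon G \to H(e)$, $g \mapsto ge$, is \emph{surjective}: it factors as $G \to G/N \xrightarrow{\sim} H(e)$, where $N = \set{p \in W : pe = e} \cap G$. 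Finally, since $e$ lies in $\overline\pi^{-1}(1_{G'})$ we have $\overline\pi(e) = 1_{G'}$, whence $\overline\pi(ge) = \pi(g)$ for every $g \in G$; that is, $\pi = \tilde\pi \circ \pi_e$. Being the composition of two surjections, $\pi$ is surjective, and \autoref{p:minimal-Polish} then gives the theorem.

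The real content of the argument is concentrated in \autoref{p:open-map}: the assertion that $H(e)$ is not merely a WAP quotient of $G$ but an honest \emph{surjective} (equivalently open) quotient. This is precisely the point at which the hypothesis $R(G) = W(G)$ is indispensable, since it enters through the density statement of \autoref{l:K-dense-subgroup}, which is the model-theoretic core of the whole development. Without $R(G) = W(G)$ one could only place $G'$ below $H(e)$ as a dense image, and the factorization $\pi = \tilde\pi \circ \pi_e$ would no longer force surjectivity. Granting \autoref{p:open-map}, the remaining steps are bookkeeping: selecting the correct idempotent $e = E(G')$ and verifying that the two surjections compose to the original map $\pi$.
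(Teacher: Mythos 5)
Your proposal is correct and follows essentially the same route as the paper's own proof: reduce via \autoref{p:minimal-Polish} to surjectivity of Polish quotients, use \autoref{l:Rp-quotients} to see $G'$ is a WAP quotient, then factor $\pi$ as $G \to H(e) \to G'$ with $e = E(G')$, where the second map is surjective by \autoref{p:corr-quot} and the first by \autoref{p:open-map}. Your explicit verification that $\overline\pi(ge) = \pi(g)$ (so the two surjections really compose to $\pi$) is a detail the paper leaves implicit in the word ``splits,'' but the argument is identical.
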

\begin{proof}
  Let $\pi \colon G \to G'$ be a quotient of $G$. By Lemma~\ref{l:Rp-quotients}, $G'$ is a WAP quotient; let $e = E(G')$. By Proposition~\ref{p:corr-quot}, $\pi$ splits as
  \[
  G \longrightarrow H(e) \longrightarrow G',
  \]
  where the second map is surjective. By Proposition~\ref{p:open-map}, the first one is also surjective, so $\pi$ is surjective, completing the proof.
\end{proof}

In certain cases, we can calculate $W(G)$ even if the group does not satisfy $W(G) = R(G)$. Then Proposition~\ref{p:corr-quot} still applies and allows us to characterize the WAP quotients of $G$, see Section~\ref{sec:exampl-wap-comp} for some examples.

\begin{prp}
  \label{p:approx-wap}
  Let $G$ be a Roelcke precompact Polish group and $H_0 \geq H_1 \geq \cdots$ be a sequence of closed subgroups of $G$ such that for every neighbourhood $U \ni 1_G$, there exists $n$ such that $H_n \sub U$. Then the set
  \[
  \set{f \in \WAP(G) : \exists n \ f \text{ is constant on double cosets of } H_n}
  \]
  is dense in $\WAP(G)$.
\end{prp}
\begin{proof}
  Let $W = W(G)$, let $f \in C(W)$ be arbitrary, and let $\eps > 0$. Consider first the action of $G$ on $W$ on the left: $g \cdot w = gw$. Let $n$ be such that for all $h \in H_n$, $\nm{h \cdot f - f} < \eps/2$. As $f$ is WAP, $\cl{H_n \cdot f}$ is weakly compact and by the Krein--\v{S}mulian theorem \cite{Conway1990}*{13.4}, $\clco(H_n \cdot f)$ is also weakly compact. Applying the Ryll-Nardzewski fixed point theorem \cite{Conway1990}*{10.8}, we obtain that there is a function $f_1 \in C(W)$ such that $\nm{f - f_1} \leq \eps/2$ and
  \begin{equation}
    \label{eq:1}
    f_1(hw) = f_1(w) \quad \text{for all } h \in H_n, w \in W.
  \end{equation}

  Next we apply the same procedure to the action on the right: $g \cdot w = wg^{-1}$. Let $H_m \leq G$ be such that $\nm{h \cdot f_1 - f_1} < \eps/2$ for all $h \in H_m$. As the left and the right action of $G$ on $W$ commute, we have that \eqref{eq:1} holds for all $\psi \in \clco(H_m \cdot f_1)$. Applying again the Ryll-Nardzewski theorem, we obtain a function $f_2 \in \clco(H_m \cdot f_1)$ fixed by $H_m$, i.e., such that $f_2(h_1wh_2) = f_2(w)$ for all $h_1 \in H_n, h_2 \in H_m, w \in W$, showing that $f_2$ is constant on double cosets of $H_{\max(m, n)}$. Moreover, $\nm{f - f_2} < \eps$.
\end{proof}

\begin{thm}
  \label{thm:ZeroDimensionalWAP}
  Let $G$ be a Roelcke precompact closed subgroup of $S_\infty$ and $W$ be its WAP compactification. Then $W$ is zero-dimensional.
\end{thm}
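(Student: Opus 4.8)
The plan is to combine \autoref{p:approx-wap} with the one special feature of subgroups of $S_\infty$: they admit a neighbourhood basis at the identity consisting of \emph{open} subgroups. Fix the faithful action of $G$ on a countable set $M$ and enumerate $M = \set{m_0, m_1, \dots}$. The pointwise stabilisers
\[
V_n = \set{g \in G : g \cdot m_i = m_i \text{ for all } i < n}
\]
form a decreasing sequence of open subgroups which is a neighbourhood basis at $1_G$, so they satisfy the hypotheses of \autoref{p:approx-wap} with $H_n = V_n$. Consequently the WAP functions that are constant on the double cosets of some $V_n$ are dense in $\WAP(G)$.

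The key observation is that each $V_n$ has only \emph{finitely many} double cosets in $G$. Indeed, $V_n$ is a symmetric neighbourhood of $1_G$ (being a subgroup), so Roelcke precompactness yields a finite $F \sub G$ with $V_n F V_n = G$; thus $G = \bigcup_{f \in F} V_n f V_n$ is a union of finitely many double cosets. Hence a WAP function constant on double cosets of $V_n$ takes only finitely many values on $G$, and, since $G$ is dense in $W$ and the function is continuous, it takes finitely many values on all of $W$. Together with the previous paragraph this shows that the finite-valued (equivalently, locally constant) continuous functions are dense in $\WAP(G) = C(W)$.

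It then remains to deduce zero-dimensionality from this density, which is routine topology. Any continuous $f \colon W \to \bC$ with finite range has clopen level sets $f^{-1}(c)$: each is closed by continuity and open because its complement is the finite union of the remaining (closed) level sets. Given distinct $p, q \in W$, pick by Urysohn a continuous $f$ with $f(p) = 0$ and $f(q) = 1$, and approximate it to within $1/3$ by a finite-valued $g$; then $g(p) \neq g(q)$, so the clopen set $g^{-1}(g(p))$ contains $p$ but not $q$. Thus the clopen subsets of $W$ separate points, and a compact Hausdorff space with this property is zero-dimensional.

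I expect the only genuine content to be the finiteness of the double coset spaces $V_n \backslash G / V_n$, which is exactly where Roelcke precompactness enters; everything after that is the standard passage from a dense supply of finite-valued functions to a clopen basis. The one point to watch is that this finiteness is a \emph{two-sided} phenomenon supplied by \autoref{p:approx-wap} and Roelcke precompactness together: one should not try to argue with one-sided cosets $V_n \backslash G$, whose finiteness would amount to oligomorphy rather than mere Roelcke precompactness.
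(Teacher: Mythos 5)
Your proof is correct and follows essentially the same route as the paper's: apply Proposition~\ref{p:approx-wap} to the pointwise stabilisers, use Roelcke precompactness to see that a function constant on double cosets of an open subgroup is finite-valued, and then separate distinct points of $W$ by a clopen level set of a finite-valued approximant to a Urysohn function. The only differences are cosmetic (an approximation tolerance of $1/3$ versus $1/2$, and phrasing the conclusion via ``clopen sets separate points'' rather than total disconnectedness, which are equivalent for compact Hausdorff spaces).
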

\begin{proof}
  Since $W$ is compact, it suffices to prove that it is totally disconnected. Let $w_1, w_2 \in W$ be two distinct points and let $f \in C(W(G))$ be a function such that $f(w_1) = 0, f(w_2) = 1$. By Proposition~\ref{p:approx-wap}, there exists an open subgroup $V \leq G$ and a function $f' \in C(W)$ constant on double cosets of $V$ such that $\nm{f' - f} < 1/2$. Then $f'(w_1) \neq f'(w_2)$ and as $G$ is Roelcke precompact, $f'$ takes only finitely many values on $G$ (and therefore on $W$), showing that $f'^{-1}(\set{f'(w_1)})$ is a clopen set separating $w_1$ from $w_2$.
\end{proof}

\begin{cor}
  \label{c:wap-quot-sinfty}
  Let $G$ be a Roelcke precompact closed subgroup of $S_\infty$. Then every WAP quotient of $G$ is also isomorphic to a closed subgroup of $S_\infty$.
\end{cor}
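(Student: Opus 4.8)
The plan is to reduce everything to the maximal groups $H(e)$ attached to central idempotents and to exploit the zero-dimensionality of $W = W(G)$ established in Theorem~\ref{thm:ZeroDimensionalWAP}. Recall that a Polish group is isomorphic to a closed subgroup of $S_\infty$ if and only if it is \emph{non-archimedean}, i.e., it has a neighbourhood basis at the identity consisting of open subgroups. Since the image of an open subgroup under an open continuous homomorphism is again an open subgroup, a Hausdorff quotient of a non-archimedean Polish group by a closed normal subgroup is once more non-archimedean (and Polish). By Proposition~\ref{p:corr-quot}, any WAP quotient $G'$ of $G$ is isomorphic to $H(e)/K$ for some central idempotent $e \in W$ and some compact normal $K \unlhd H(e)$; hence it suffices to prove that each $H(e)$ is non-archimedean.

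Fix a central idempotent $e$ and write $H = H(e)$ and $\mathcal W = We$. Then $\mathcal W$ is a compact metrizable semitopological subsemigroup of $W$ with identity $e$ and dense subgroup $H$, and, being a subspace of the zero-dimensional space $W$, it is itself zero-dimensional; moreover $H$ carries the subspace topology from $\mathcal W$. I first upgrade separate to joint continuity of the action $H \times \mathcal W \to \mathcal W$, $(h,w) \mapsto hw$. For a unit $p \in H$ one has $\mathcal W p = \mathcal W$ (as $\mathcal W = \mathcal W p^{-1} p \sub \mathcal W p$), so Theorem~\ref{th:joint-cont} gives joint continuity of the multiplication at every $(s,p)$ with $p \in H$; applying the involution, which is an anti-homeomorphism of $\mathcal W$, yields joint continuity at every $(p,s)$ with $p \in H$ as well. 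In particular each $h \in H$ acts on $\mathcal W$ as a homeomorphism, with inverse the action of $h^{-1}$.

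Next, for each clopen set $D \subseteq \mathcal W$ I consider the stabiliser $V_D = \set{h \in H : hD = D}$, which is clearly a subgroup of $H$. The key step is to show that $V_D$ is open, for which it is enough to find an open $U \ni e$ in $H$ with $uD = D$ for all $u \in U$. Since $e$ is the identity, $ew \in D \Iff w \in D$ for every $w$; using joint continuity at each $(e,w)$ together with the fact that $D$ is clopen, I cover $\mathcal W$ by finitely many open sets $O_{w_1}, \dots, O_{w_k}$, each contained in $D$ or in its complement and each equipped with an open $U_{w_i} \ni e$ such that $U_{w_i} O_{w_i}$ stays on the same side of $D$ as $O_{w_i}$; then $U = \bigcap_i U_{w_i}$ works. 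Finally, these subgroups form a neighbourhood basis at $e$: given an open $O \ni e$ in $H$, write $O = O'' \cap H$ with $O''$ open in $\mathcal W$ and, by zero-dimensionality, pick a clopen $D$ with $e \in D \subseteq O''$; then $h \in V_D$ forces $h = he \in hD = D \subseteq O''$, so $V_D \subseteq O$.

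Thus $H = H(e)$ is non-archimedean and Polish (Lemma~\ref{l:maximal-group}), hence a closed subgroup of $S_\infty$, and by the reduction in the first paragraph so is every WAP quotient $G'$. The only step requiring genuine work is the openness of $V_D$: the finite-subcover argument must be arranged so that the equivalence $hw \in D \Iff w \in D$ holds for \emph{all} $w \in \mathcal W$ simultaneously, which is precisely where the joint continuity of the action and the clopenness of $D$ are used.
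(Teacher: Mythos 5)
Your proof is correct and takes essentially the same route as the paper's: reduce via Proposition~\ref{p:corr-quot} to showing each maximal group $H(e)$ is non-archimedean, use the zero-dimensionality of $W(G)$ from Theorem~\ref{thm:ZeroDimensionalWAP}, and obtain a neighbourhood basis of open subgroups as stabilisers of clopen subsets of $We = \overline{H(e)}$. The only difference is that you spell out steps the paper leaves implicit --- the passage to $H(e)/K$, the joint-continuity upgrade via Theorem~\ref{th:joint-cont} and the involution, and the finite-subcover argument for openness of the stabiliser $V_D$ --- which is an elaboration of, not a departure from, the paper's argument.
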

\begin{proof}
  By Proposition~\ref{p:corr-quot}, it suffices to check that $H(e)$ is isomorphic to a subgroup of $S_\infty$ for every central idempotent $e \in W(G)$. Let $X$ be the closure of $H(e)$ in $W$. Then $H(e)$ acts on $X$ by homeomorphisms: $h \cdot x = hx$. Let $F$ be a closed subset of $H(e)$ that does not contain $e$ and let $\cl{F}$ be the closure of $F$ in $X$. Then $e \notin \cl{F}$ and as $W$ (and therefore $X$) is zero-dimensional, there exists a clopen set $U \sub X$ such that $e \in U$ and $U \cap \cl{F} = \emptyset$. Then $V = \set{h \in H(e) : hU = U}$ is a clopen subgroup of $H(e)$ separating $e$ from $F$.
\end{proof}
The last corollary can be rephrased as follows: if $G$ is a Roelcke precompact subgroup of $S_\infty$ and $\pi \colon G \to \Iso(E)$ is a continuous representation of $G$ by isometries on a reflexive Banach space $E$, then there exists a closed subgroup $H$ of $S_\infty$, a continuous homomorphism $\sigma \colon G \to H$ and a topological embedding $\iota \colon H \to \Iso(E)$ such that $\pi = \iota \circ \sigma$.

A Polish group $G$ is called \emph{WAP trivial} if it admits no non-trivial WAP quotients. The first example was found by Megrelishvili~\cite{Megrelishvili2001} who showed that $\Homeo^+([0, 1])$ is WAP trivial. Another example, due to Pestov~\cite{Pestov2007}, is $\Iso(\bU_1)$. His proof uses the result of Megrelishvili and the result of Uspenskij~\cite{Uspenskii1990} that $\Iso(\bU_1)$ is a universal Polish group; we provide a direct proof of the WAP triviality of $\Iso(\bU_1)$ in Section~\ref{sec:exampl-wap-comp}. Corollary~\ref{c:wap-quot-sinfty} gives yet another method to produce examples of WAP trivial groups.
\begin{cor}
  \label{c:wap-trivial}
  Let $H$ be a Roelcke precompact subgroup of $S_\infty$ and let $G$ be a quotient of $H$ that has no proper open subgroups. Then $G$ is WAP trivial.
\end{cor}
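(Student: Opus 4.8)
The plan is to reduce the statement to Corollary~\ref{c:wap-quot-sinfty}, using the fact that closed subgroups of $S_\infty$ are non-archimedean. Let $\pi \colon H \to G$ denote the given quotient map, and suppose towards a contradiction that $G$ admits a non-trivial WAP quotient $\rho \colon G \to G'$. First I would observe that the composite $\rho \circ \pi \colon H \to G'$ again has dense image: $\pi(H)$ is dense in $G$, hence $\rho(\pi(H))$ is dense in $\rho(G)$, which in turn is dense in $G'$. Since $G'$ is Polish and WAP faithful by hypothesis, this exhibits $G'$ as a WAP quotient of $H$ as well. As $H$ is a Roelcke precompact (closed) subgroup of $S_\infty$, Corollary~\ref{c:wap-quot-sinfty} applies directly to $H$ and shows that $G'$ is isomorphic to a closed subgroup of $S_\infty$.

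Next I would exploit that closed subgroups of $S_\infty$ are non-archimedean, so the open subgroups of $G'$ form a neighbourhood basis at $1_{G'}$. Since $G'$ is non-trivial, choose $g' \neq 1_{G'}$; by the Hausdorff property there is an open subgroup $V \leq G'$ with $g' \notin V$, so $V$ is a \emph{proper} open subgroup. Then $\rho^{-1}(V)$ is an open subgroup of $G$, and it is proper: open subgroups are closed, so the inclusion $\rho(G) \subseteq V$ would force $G' = \overline{\rho(G)} \subseteq V$, contradicting the properness of $V$; hence $\rho(G) \not\subseteq V$ and $\rho^{-1}(V) \neq G$. This produces a proper open subgroup of $G$, contradicting the hypothesis that $G$ has no proper open subgroups. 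Therefore $G'$ must be trivial, which is exactly the assertion that $G$ is WAP trivial.

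The genuinely substantive input is Corollary~\ref{c:wap-quot-sinfty} (and, behind it, Theorem~\ref{thm:ZeroDimensionalWAP}), which is already available; everything else consists of soft topological-group manipulations. The only point that calls for a little care is the reduction in the first paragraph — checking that a WAP quotient of $G$ is genuinely a WAP quotient of $H$ so that the corollary applies — but this is immediate once one unwinds the definitions (the target is Polish and WAP faithful, and the composite has dense image). I therefore do not expect a serious obstacle; the effort of the argument has been front-loaded into the corollary.
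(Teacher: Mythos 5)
Your proof is correct and follows essentially the same route as the paper: reduce to Corollary~\ref{c:wap-quot-sinfty} by noting that a WAP quotient of $G$ is a WAP quotient of $H$, then use the non-archimedean structure of the resulting subgroup of $S_\infty$ to contradict the absence of proper open subgroups of $G$. The paper phrases the last step as "the induced action of $G$ on a countable set has open stabilizers, hence is trivial," which is the same observation as your pullback $\rho^{-1}(V)$.
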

\begin{proof}
  Suppose that $G'$ is a WAP quotient of $G$. Then $G'$ is also a WAP quotient of $H$ and by Corollary~\ref{c:wap-quot-sinfty}, $G'$ is isomorphic to a subgroup of $S_\infty$. This gives a continuous action of $G$ on a countable set which, by hypothesis, has to be trivial.
\end{proof}
Corollary~\ref{c:wap-trivial} applies to $\Homeo^+([0, 1])$ (with $H = \Aut(\Q)$), thus providing a new proof of Megrelishvili's result, but also to some other homeomorphism groups.

For example, using the method of projective Fraïssé limits (see \cite{Irwin2006}), Barto\v{s}ova and Kwiatkowska~\cite{Bartosova2013p} construct a homomorphism from a Roelcke precompact closed subgroup of $S_\infty$ to the homeomorphism group $\Homeo(\bL)$ of the \df{Lelek fan} with a dense image and also show that $\Homeo(\bL)$ has no proper open subgroups; thus, we conclude that $\Homeo(\bL)$ is WAP trivial.

In \cite{Glasner2008}*{Question~10.5}, Glasner and Megrelishvili ask for the existence of a group which is WAP trivial but does not contain a copy of $\Homeo^+([0, 1])$. In fact, $\Homeo(\bL)$ is such a group: by \cite{Bartosova2013p}, $\Homeo(\bL)$ is totally disconnected (and therefore does not contain a copy of $\Homeo^+([0, 1])$) and by the above remark, it is WAP trivial. We are grateful to Michael Megrelishvili for pointing that out.

We finally give a proof of Corollary~\ref{c:I:unique-topology} from the introduction. Recall that a Polish group $G$ has \emph{ample generics} if the conjugation action $G  \actson G^n$ has a comeager orbit for every $n$. (Note that the definitions of ample generics given in \cite{Hodges1993a} and \cite{Kechris2007a} are somewhat different; we use the one of \cite{Kechris2007a}.)
\begin{lem}
  \label{l:ample-generics}
  Let $G$ be the automorphism group of a classical, $\aleph_0$-categorical, $\aleph_0$-stable structure. Then $G$ admits a basis of neighbourhoods of the identity consisting of open subgroups with ample generics.
\end{lem}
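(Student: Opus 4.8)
The plan is to reduce the statement to the theorem of Hodges, Hodkinson, Lascar, and Shelah \cite{Hodges1993a} that the automorphism group of a classical, $\aleph_0$-categorical, $\aleph_0$-stable structure has ample generics (in the reformulation of Kechris and Rosendal \cite{Kechris2007a}), by applying that theorem not to $G$ itself but to each member of a natural neighbourhood basis at the identity. First I would recall that since $G = \Aut(\bM)$ is a closed subgroup of $S_\infty$ carrying the topology of pointwise convergence, the pointwise stabilisers $V_{\bar a} = \set{g \in G : g \cdot a_i = a_i \text{ for all } i}$ of finite tuples $\bar a = (a_1, \dots, a_k)$ from $M$ form a neighbourhood basis of $1_G$ consisting of clopen subgroups. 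It therefore suffices to show that each $V_{\bar a}$ has ample generics.

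The key observation is that every $V_{\bar a}$ is itself the automorphism group of a structure of exactly the same kind. Let $\bM_{\bar a}$ denote the expansion of $\bM$ obtained by adding a constant symbol naming each $a_i$. An automorphism of $\bM_{\bar a}$ is precisely an automorphism of $\bM$ fixing each $a_i$, so $\Aut(\bM_{\bar a}) = V_{\bar a}$ as topological groups (the subspace topology on $V_{\bar a}$ agreeing with the pointwise-convergence topology on $\Aut(\bM_{\bar a})$). Naming finitely many constants preserves all three hypotheses: $\bM_{\bar a}$ is again classical; it is $\aleph_0$-categorical because its $n$-types are the types of $\bM$ over $\bar a$, of which there remain only finitely many in each arity; and it is $\aleph_0$-stable, since counting types over a model is unaffected by adjoining finitely many named parameters. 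These are standard model-theoretic facts, which I would state and cite rather than reprove. Applying \cite{Hodges1993a} to each $\bM_{\bar a}$ then yields that every $V_{\bar a}$ has ample generics, and the collection $\set{V_{\bar a}}$ is the desired basis.

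There is no substantial obstacle beyond the cited input: the genuine depth lies entirely in the HHLS theorem, and the present lemma is essentially the remark that it applies \emph{uniformly} across the whole neighbourhood basis rather than to $G$ alone. The only point requiring verification by hand is the stability of the three hypotheses under naming constants, which is routine; I would take care to note that it is crucial that $\bM_{\bar a}$ stays classical (discrete), so that \cite{Hodges1993a} is applicable, and that the identification $\Aut(\bM_{\bar a}) = V_{\bar a}$ is an isomorphism of topological groups and not merely of abstract groups.
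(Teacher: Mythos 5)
Your reduction via naming constants is formally fine as far as it goes: naming finitely many constants does preserve classicality, $\aleph_0$-categoricity and $\aleph_0$-stability, and $\Aut(\bM_{\bar a}) = V_{\bar a}$ as topological groups. The gap is that the theorem you feed into this reduction is false: it is \emph{not} true that the automorphism group of every classical, $\aleph_0$-categorical, $\aleph_0$-stable structure has ample generics in the sense of \cite{Kechris2007a}, and this is precisely why the lemma is stated in the weaker ``basis of open subgroups'' form rather than for $G$ itself. Concretely, let $\bM$ be a countably infinite set with a single unary predicate $P$ whose interpretation is a two-element set $\set{a,b}$. This structure is classical, $\aleph_0$-categorical and $\aleph_0$-stable, but $\Aut(\bM) \cong \Sym(\set{a,b}) \times \Sym(M \sminus \set{a,b})$, and every conjugacy class is contained in one of the two proper clopen cosets of the open subgroup $\set{1} \times \Sym(M \sminus \set{a,b})$ (conjugation cannot change the first coordinate); since the complement of either coset is nonempty open, hence non-meagre, no conjugacy class is comeagre, so $\Aut(\bM)$ fails to have even a single generic element ($n=1$). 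The same structure defeats your proposed basis: for any finite tuple $\bar a$ disjoint from $\set{a,b}$, the expansion $\bM_{\bar a}$ has automorphism group of exactly the same form, so $V_{\bar a}$ has no ample generics. The lemma is nevertheless true here, but only because the stabilizers of finite sets \emph{containing} $\set{a,b}$ --- that is, of finite algebraically closed sets --- form a basis of subgroups with ample generics.

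This is exactly the point the paper's proof is built to handle, and it is why the paper explicitly warns that the notion of ample generics in \cite{Hodges1993a} differs from that of \cite{Kechris2007a}. What \cite{Hodges1993a} actually provides (their Proposition~3.4 and Theorem~3.1) is an amalgamation base consisting of finite \emph{algebraically closed} sets in $\bM\eq$, i.e., amalgamation of systems of partial automorphisms over such sets, not over arbitrary finite sets. Translated into the Kechris--Rosendal framework, this yields the weak amalgamation property together with only the \emph{cofinal} joint embedding property for the classes $\mcK_p^n$ --- not the full JEP one would need to conclude that $G$, or every pointwise stabilizer $V_{\bar a}$, has ample generics. Theorem~3.9 of \cite{Kechris2007a} then gives, for each $n$, a basis of open subgroups $H$ with a comeagre orbit in $H^n$, and the paper still needs a uniformity-in-$n$ argument (the same amalgamation base witnesses CJEP for all $n$ simultaneously) to produce a single basis working for all $n$ at once. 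So the defect is not in your bookkeeping with constants but in the input theorem itself; repairing it forces you to replace stabilizers of arbitrary finite tuples by stabilizers of (finite sets interdefinable with) algebraically closed subsets of $\bM\eq$, which is the actual content of the paper's argument.
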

\begin{proof}
  Let $M$ be such a structure. $M\eq$ can be equipped with a relational language in which it eliminates quantifiers, and can then be transformed into a $1$-sorted homogeneous structure by naming each sort with a unary predicate. Let $\mcK$ be the age of this structure, and let $\mcK_p^n$ be defined as in the paragraph preceding Theorem~2.11 in \cite{Kechris2007a}. First apply \cite{Hodges1993a}*{Proposition~3.4} to see that $M$ has an \emph{amalgamation base} $\mcA$ \cite{Hodges1993a}*{Definition~2.8}.
  If $A \in \mcA$, then $A$ is of the form $\acl\eq(C)$ for some finite $C \sub M$.
  By \cite{Hodges1993a}*{Theorem~3.1}, $A$ is then interdefinable with a finite subset of itself, and in particular, $A$ is interdefinable with its restriction to a finite family of sorts, which is both finite and closed under all automorphisms of $A$, so in what follows we may assume that $\mcA \subseteq \mcK$.
  From the existence of an amalgamation base, it follows that $\mcK_p^n$ has the \emph{weak amalgamation property} \cite{Kechris2007a}*{Definition~3.3} and the \emph{cofinal joint embedding property (CJEP)}  \cite{Kechris2007a}*{Definition~2.13}.
  Moreover, CJEP holds uniformly in $n$, in the sense that for every $A \in \mcK$ there is $B \geq A$, $B \in \mcK$ (in fact, $B \in \mcA$) such that for every $n$, $\langle B, \id_B, \ldots, \id_B \rangle$ is a witness for the corresponding instance of CJEP in $\mcK_p^n$.
  By \cite{Kechris2007a}*{Theorem~3.9}, which also applies for $n$-tuples of automorphisms, WAP and CJEP for $\mcK_p^n$ imply that $G$ admits a basis of neighbourhoods of $1_G$ consisting of subgroups $H$ such that $H^n$ has a comeagre orbit under conjugation.
  By the uniform CJEP, the same argument yields that $G$ admits a basis of neighbourhoods of $1_G$ consisting of subgroups $H$ such that for all $n$, $H^n$ has a comeagre orbit under conjugation, and we are done.
\end{proof}

On a related note, Malicki~\cite{Malicki2015} recently characterized the ultrametric spaces whose isometry groups satisfy the conclusion of Lemma~\ref{l:ample-generics}, that is, admit a basis at the identity consisting of open subgroups with ample generics.

\begin{proof}[Proof of Corollary~\ref{c:I:unique-topology}]
  Let $G$ be the automorphism group of an $\aleph_0$-categorical, $\aleph_0$-stable structure and denote by $\tau$ its standard Polish topology. Suppose that $\sigma$ is some other separable Hausdorff topology on $G$. Let $H$ be an open subgroup of $G$ with ample generics as given by Lemma~\ref{l:ample-generics}. Then, by \cite{Kechris2007a}*{Theorem~6.24}, $\id \colon (H, \tau) \to (H, \sigma)$ is continuous, so $\id \colon (G, \tau) \to (G, \sigma)$ is continuous. By Theorem~\ref{th:min-topology}, it is also open, so a homeomorphism.
\end{proof}

\section{The model-theoretic viewpoint}
\label{sec:model-theor-viewp}

In this section we describe the model-theoretic meaning of many notions and results appearing in earlier sections --- and in fact, several key results of this paper were first given model-theoretic proofs that were only later translated into the language of semigroups.
We shall also assume some familiarity with model theory, including metric model theory.
In the context of the latter, we ignore the distinction between a formula and a definable predicate (uniform limit of formulas), which is purely syntactic.
Throughout, let $\bM$ be an $\aleph_0$-categorical structure and let $G = \Aut(\bM)$.

The Ryll-Nardzewski Theorem allows us to recover definable predicates and type spaces over $\emptyset$ purely from the group action $G \actson M$.
Indeed, an $\aleph_0$-categorical structure is approximately homogeneous, which means that two tuples $a,b \in M^n$ have the same type if and only if $[a] = [b]$ in $M^n \dslash G$.
In addition, all $n$-types are realised in $\bM$, so $M^n \dslash G = \tS_n(\emptyset)$, the space of $n$-types over $\emptyset$, and similarly, for an arbitrary countable index set $I$ instead of $n$.
By the Ryll-Nardzewski Theorem (or by \autoref{thm:RoelckePreCompactGroup}), the logic topology on $\tS_I(\emptyset) = M^I \dslash G$ agrees with the topology induced by the metric, and it is compact.
Therefore, an $I$-ary formula is just a continuous function on $M^I \dslash G$, or equivalently, a continuous $G$-invariant function $\varphi \colon M^I \rightarrow \bC$, and every such function is automatically uniformly continuous and bounded (for $I$ countable --- and a formula cannot depend on more than countably many variables).
A continuous combination of formulas is, of course, a formula; as for quantification, this is entirely subsumed in \autoref{thm:RoelckePreCompactGroup}.
Similarly, if $X \subseteq M^I$ is $G$-invariant and closed, we can speak of a \emph{formula on $X$} as being either the restriction of an $I$-ary formula to $X$ or a continuous function on $X \dslash G$; by the Tietze Extension Theorem, the two notions agree.
If $\bM$ is a classical structure, then $M^I \dslash G$ is totally disconnected and $\{0,1\}$-valued formulas suffice to describe the logic.

In order to define stability, one considers formulas in two groups of variables, i.e., formulas on $X \times Y$ where $X, Y \sub M^I$ are $G$-invariant and closed. Recall that a real-valued formula $\varphi$ has the \df{order property} on $X \times Y$ if there exist sequences $\set{x_n}_n \sub X$ and $\set{y_m}_m \sub Y$ and real numbers $r < s$ such that $\varphi(x_n, y_m) \leq r$ for $n < m$ and $\varphi(x_n, y_m) \geq s$ for $n > m$, or the other way round. A formula is \df{stable} if it does not have the order property; a theory is \df{stable} if every formula is stable on $\bM^I \times \bM^I$. By passing to appropriate subsequences, it is easy to check that the absence of the order property is equivalent to Grothendieck's condition:
\begin{equation} \label{eq:stability}
\lim_m \lim_n \varphi(x_m, y_n) = \lim_n \lim_m \varphi(x_m, y_n)
\end{equation}
for all sequences $\set{x_n}_n \sub X, \set{y_m}_m \sub Y$ for which both limits exist. For complex-valued formulas, it will be convenient to take \eqref{eq:stability} as the definition of stability. Now we have the following.
\begin{lem}
  \label{lem:StableWAP}
  Let $\varphi$ be a formula on $X \times Y$.
  Then $\varphi$ is stable if and only if the function $\tilde \varphi_{x,y}\colon G \to \bC$ defined by $\tilde \varphi_{x, y}(g) = \varphi(x,g \cdot y)$ is WAP for every $x \in X$ and $y \in Y$.
  If $X$ and $Y$ are both orbit closures, say $X = [x_0]$ and $Y = [y_0]$, then $\varphi$ is stable if and only if $\tilde \varphi_{x_0,y_0}$ is WAP.
\end{lem}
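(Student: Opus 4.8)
The plan is to read off the equivalence from a single algebraic identity that converts Grothendieck's double-limit criterion for WAP functions (\autoref{th:GrothendieckGroups}) into the double-limit criterion \eqref{eq:stability} defining stability. First I would check that $\tilde\varphi_{x,y}$ really belongs to $\RUCB(G)$: it is bounded and continuous because $\varphi$ is (being a formula, $\varphi$ is uniformly continuous and bounded), and it is Roelcke uniformly continuous because of the $G$-invariance of $\varphi$. Indeed, writing
\[
\tilde\varphi_{x,y}(g) = \varphi(x, g\cdot y) = \varphi(g^{-1}\cdot x,\, y)
\]
exhibits it as left uniformly continuous via the first form (orbit maps $g\mapsto g\cdot y$ are left uniformly continuous and $\varphi$ is uniformly continuous) and as right uniformly continuous via the second form (since $g\mapsto g^{-1}$ interchanges the two uniformities). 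The same $G$-invariance yields the crucial identity $\tilde\varphi_{x,y}(g_n h_m)=\varphi(g_n^{-1}\cdot x,\, h_m\cdot y)$ for all $g_n,h_m\in G$, which is the bridge between the two settings.

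For the direction stability $\Rightarrow$ WAP, fix $x\in X$, $y\in Y$ and arbitrary sequences $(g_n),(h_m)\sub G$. Setting $a_n=g_n^{-1}\cdot x$ and $b_m=h_m\cdot y$, which lie in $X$ and $Y$ by $G$-invariance, the identity turns the Grothendieck condition \eqref{eq:GrothendieckGroups} for $\tilde\varphi_{x,y}$ into the instance of \eqref{eq:stability} for $\varphi$ along $(a_n),(b_m)$. Stability supplies the latter, so $\tilde\varphi_{x,y}$ is WAP by \autoref{th:GrothendieckGroups}; this holds for every $x,y$, and in particular covers the forward implication in the orbit-closure statement.

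For the converse when $X=[x_0]$ and $Y=[y_0]$, suppose $\tilde\varphi_{x_0,y_0}$ is WAP, and let $(a_n)\sub X$, $(b_m)\sub Y$ be sequences for which both iterated limits in \eqref{eq:stability} exist, with values $A$ and $B$. Here I would exploit that the single orbit $G\cdot x_0$ is dense in $[x_0]=X$ and $G\cdot y_0$ dense in $Y$: given $\epsilon>0$, uniform continuity of $\varphi$ lets me pick $g_n,h_m\in G$ with $g_n^{-1}\cdot x_0$ within $\epsilon$ of $a_n$ and $h_m\cdot y_0$ within $\epsilon$ of $b_m$, so that $\lvert \tilde\varphi_{x_0,y_0}(g_n h_m)-\varphi(a_n,b_m)\rvert$ is uniformly small by the identity. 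Passing to subsequences (using boundedness) so that the two iterated limits of $\tilde\varphi_{x_0,y_0}(g_n h_m)$ exist, \autoref{th:GrothendieckGroups} equates them; since they are within $\epsilon$ of $A$ and $B$ respectively, one gets $\lvert A-B\rvert\leq 2\epsilon$, and letting $\epsilon\to 0$ gives $A=B$, i.e.\ stability on $[x_0]\times[y_0]$.

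Finally, for general $X,Y$ I would reduce the converse to the orbit-closure case. Assume $\tilde\varphi_{x,y}$ is WAP for all $x\in X$, $y\in Y$, but suppose toward a contradiction that $\varphi$ is unstable on $X\times Y$, witnessed by sequences $(a_i)\sub X$, $(b_j)\sub Y$. The obstacle is that these parameters need not lie in single orbit closures, so the density argument of the previous paragraph does not apply directly. I would resolve this by the standard extraction: passing to subsequences and applying Ramsey's theorem in the compact space of types $M^I\sslash G$, I may assume the sequence $\bigl((a_i,b_i)\bigr)_i$ is indiscernible, so that all $a_i$ share one type and hence lie in a single orbit closure $[a_0]\sub X$, all $b_j$ in a single $[b_0]\sub Y$, while the double-limit failure is preserved. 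Then $\varphi$ is unstable on $[a_0]\times[b_0]$, so by the orbit-closure case just established $\tilde\varphi_{a_0,b_0}$ is not WAP, contradicting the hypothesis since $a_0\in X$ and $b_0\in Y$. I expect this extraction step --- getting the witnessing parameters into a single orbit closure while preserving the asymmetry of the iterated limits --- to be the one requiring genuine care; the rest is the soft interplay between the two instances of Grothendieck's criterion.
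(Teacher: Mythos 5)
Your proof is correct, and in the direction stability $\Rightarrow$ WAP and in the orbit-closure equivalence it follows the same route as the paper: both rest on the identity $\tilde \varphi_{x_0,y_0}(g^{-1}h) = \varphi(g \cdot x_0, h \cdot y_0)$ combined with Grothendieck's criterion, density of the orbits, and uniform continuity of $\varphi$. Where you genuinely diverge is the converse for general $X, Y$. The paper does not extract indiscernibles: given sequences $\set{x_m} \sub X$, $\set{y_n} \sub Y$ violating \eqref{eq:stability}, it passes to subsequences so that $[x_m] \to [x_0]$ and $[y_n] \to [y_0]$ in the compact quotients $X \sslash G$ and $Y \sslash G$, chooses $g_m, h_n \in G$ with $d(g_m \cdot x_0, x_m) < 2^{-m}$ and $d(h_n \cdot y_0, y_n) < 2^{-n}$, and lets uniform continuity transfer the violation of \eqref{eq:stability} into a violation of \eqref{eq:GrothendieckGroups} for $\tilde \varphi_{x_0,y_0}$; the witnesses need not share a type, they only need to be asymptotically close to a single orbit, which is exactly what compactness of $X \sslash G$ provides. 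Your Ramsey route reaches the same reduction at a higher cost, and the phrase ``I may assume the sequence is indiscernible'' glosses over two points: in the metric setting Ramsey's theorem alone does not produce exact indiscernibles (one needs the standard approximate-Ramsey/compactness extraction), and the indiscernible sequence consists of \emph{new} points realizing the limiting EM-type rather than a subsequence of the original one, so you must also invoke the fact that all types over $\emptyset$ in countably many variables are realized in $\bM$ (true here by $\aleph_0$-categoricity) and that $X$ and $Y$, being closed and $G$-invariant, contain those realizations; finally, as you yourself note, the witnesses must first be rearranged by a diagonal argument so that $\varphi(a_i,b_j)$ is near $A$ for $i < j$ and near $B$ for $i > j$, so that the extraction preserves the double-limit failure. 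None of this is a gap --- each step is standard --- but the paper's compactness-plus-approximation argument achieves the same reduction in a few lines, uses nothing beyond the compactness of $X \sslash G$ already contained in the standing hypotheses, and avoids all bookkeeping attached to indiscernible extraction in continuous logic.
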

\begin{proof}
  As $\tilde \varphi_{x_0, y_0}(g^{-1} h) = \varphi(x_0,g^{-1} h \cdot y) = \varphi(g \cdot x_0,h \cdot y_0)$, the second assertion follows directly from \autoref{th:GrothendieckGroups}.

  To prove the first assertion, observe that if $\tilde \varphi_{x, y}$ violates \eqref{eq:GrothendieckGroups} for some sequences $\set{g_m}, \set{h_n} \sub G$, then the sequences $g_m^{-1} \cdot x$ and $h_n \cdot y$ violate \eqref{eq:stability}. Conversely, suppose that $\set{x_m}$ and $\set{y_n}$ are sequences that violate \eqref{eq:stability}. Using the compactness of $X \dslash G$ and $Y \dslash G$, we may assume that $[x_m] \to [x_0]$ and $[y_n] \to [y_0]$, i.e., there exist sequences $\set{g_m}$ and $\set{h_n} \sub G$ such that $d(g_m \cdot x_0, x_m) < 2^{-m}$ and $d(h_n \cdot y_0, y_n) < 2^{-n}$. Now using the uniform continuity of $\varphi$ and by passing to subsequences again, we obtain that $\tilde \varphi_{x_0, y_0}$ violates \eqref{eq:GrothendieckGroups}.
\end{proof}

One can develop a large part of stability theory in this formalism --- here we shall content ourselves with pointing out how the definability of types follows from Grothendieck's criterion.
For this, we shall require a slightly stronger form of \autoref{th:GrothendieckGroups}.
\begin{thm}[\cite{Grothendieck1952}*{Th{\'e}or{\`e}me~6}]
  \label{thm:GrothendieckCriterion}
  Let $X$ be any topological space, let $A \subseteq C_b(X)$, and let $X_0 \subseteq X$ be a dense set.
  Then $A$ is weakly precompact in $C_b(X)$ if and only if $A$ is uniformly bounded and for every two sequences $\{f_n\} \subseteq A$ and $\{x_m\} \subseteq X_0$, we have
  \begin{gather*}
    \lim_n \lim_m f_n(x_m) = \lim_m \lim_n f_n(x_m)
  \end{gather*}
  as soon as both limits exist.
\end{thm}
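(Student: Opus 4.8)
The plan is to prove both implications after two standard reductions. First, since $C_b(X)$ is a commutative unital C$^*$-algebra, the Gelfand representation identifies it isometrically with $C(K)$ for $K$ its maximal ideal space, a compact Hausdorff space into which $X$ (and hence the dense set $X_0$) maps with dense image; under this isometry the weak topology, boundedness, and pointwise evaluations are all preserved, so I may replace $X$ by $K$, $X_0$ by a dense subset $D \subseteq K$, and assume $A \subseteq C(K)$ is bounded. Second, by the Eberlein--\v{S}mulian theorem, relative weak compactness of $A$ is equivalent to the statement that every sequence in $A$ has a weakly convergent subsequence, so throughout I work sequentially. The one analytic input I will use repeatedly is that, by the Riesz representation theorem identifying $C(K)^*$ with the regular Borel measures on $K$ together with Lebesgue's dominated convergence theorem, a uniformly bounded sequence in $C(K)$ converges weakly if and only if it converges pointwise to a (necessarily continuous) limit.

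For the easy direction, assume $A$ is weakly precompact and let $\{f_n\} \subseteq A$, $\{x_m\} \subseteq D$ be such that both iterated limits exist, with values $a = \lim_n \lim_m f_n(x_m)$ and $b = \lim_m \lim_n f_n(x_m)$. By Eberlein--\v{S}mulian I pass to a subsequence $f_{n_k} \to f$ weakly; since a subsequence of a convergent scalar sequence has the same limit, the two iterated limits of $\{f_{n_k}\}$ along $\{x_m\}$ are still $a$ and $b$. Banach--Alaoglu gives a weak$^*$ cluster measure $\mu$ of the Dirac masses $\{\delta_{x_m}\}$ in the unit ball of $C(K)^*$; evaluating along the cluster subnet against each fixed $f_{n_k}$ and against $f$, and using that the relevant inner limits are assumed to exist, one gets $\lim_m f_{n_k}(x_m) = \int f_{n_k}\,d\mu$ and $\lim_m f(x_m) = \int f\,d\mu$. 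Then $a = \lim_k \int f_{n_k}\,d\mu = \int f\,d\mu$ by weak convergence, while $b = \lim_m f(x_m) = \int f\,d\mu$, so $a = b$.

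For the converse, which is the substantial direction, I will show that the double-limit condition forces the pointwise closure $\overline{A}^{\,p} \subseteq \bC^K$ to consist entirely of continuous functions. Granting this, $\overline{A}^{\,p}$ is a pointwise-closed subset of the product of disks of radius $\sup_{f \in A}\|f\|$, hence pointwise compact by Tychonoff; it lies in $C(K)$; and the analytic input above shows that pointwise-convergent sequences in it converge weakly, so via Eberlein--\v{S}mulian every sequence of $A$ has a weakly convergent subsequence and $A$ is relatively weakly compact. To prove the continuity claim I argue by contradiction: if some $g \in \overline{A}^{\,p}$ were discontinuous at a point $x_\infty \in K$, then choosing a net in $D$ converging to $x_\infty$ along which $g$ stays bounded away from $g(x_\infty)$, I would build by an alternating induction a sequence $\{f_{n_k}\} \subseteq A$ and a sequence $\{z_m\} \subseteq D$ --- at each stage selecting a function close to $g$ on the finitely many points chosen so far, and a point close to $x_\infty$ on which the functions chosen so far are close to $g(x_\infty)$ --- so that the two iterated limits $\lim_k \lim_m f_{n_k}(z_m)$ and $\lim_m \lim_k f_{n_k}(z_m)$ both exist but take the distinct values $\lim_m g(z_m)$ and $g(x_\infty)$, contradicting the hypothesis.

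The main obstacle is exactly this last interlacing construction: turning a single point of discontinuity of a pointwise cluster function into two genuinely different iterated limits requires careful inductive bookkeeping, and it is here that one must pass between nets (in which the discontinuity is detected) and sequences (in which the double-limit hypothesis is phrased). This passage is precisely the angelic behaviour of the space $C(K)$ in its pointwise topology, and together with the sequential reduction from Eberlein--\v{S}mulian it does the real work; the remaining steps are the routine invocations of Gelfand duality, Banach--Alaoglu, Riesz representation, and dominated convergence assembled above.
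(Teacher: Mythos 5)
First, a point of comparison: the paper does not prove this statement at all --- it is quoted from Grothendieck's 1952 paper (Th\'eor\`eme~6) and used as a black box --- so your proposal can only be measured against the standard proof. Your architecture (Gelfand reduction to a compact $K$ with $D$ dense, the easy direction via a cluster measure of the Dirac masses, and, for the converse, continuity of pointwise cluster functions followed by a sequential extraction) is indeed the classical one, and your easy direction is correct. But the converse, which you rightly call the substantial direction, has two genuine gaps, and they sit exactly at the two places where the theorem is hard.

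\emph{Gap 1: the dense set.} You start the contradiction argument by ``choosing a net in $D$ converging to $x_\infty$ along which $g$ stays bounded away from $g(x_\infty)$.'' No such net need exist: discontinuity of $g$ at $x_\infty$ only provides witnesses in $K$, and for a general function they cannot be moved into $D$. (Take $K = [0,1]$, $D = \Q \cap [0,1]$, $g$ the indicator of the irrationals, $x_\infty = 0$: then $g$ is discontinuous at $0$, yet every net in $D$ converging to $0$ has $g$-values identically $0 = g(0)$.) The obstruction is that $g$ is merely a pointwise cluster function: closeness conditions can be transferred from a witness $y \in K$ to a nearby point of $D$ only for the finitely many \emph{continuous} functions $f_{n_1}, \dots, f_{n_k}$ chosen so far, never for $g$ itself. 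The standard repair is a relocation argument: either the discontinuity at $x_\infty$ is already witnessed by points of $D$, or there is a witness $w$ with $|g(w) - g(x_\infty)|$ large such that all points of $D$ near $w$ have $g$-values close to $g(x_\infty)$, hence far from $g(w)$; then the discontinuity of $g$ \emph{at $w$} is $D$-witnessed and one runs the interlacing there instead. Without this step your induction cannot start; and this step is precisely what makes the dense-set version (the one the paper actually needs, since it applies the criterion with $X_0$ a set of realised types) stronger than the compact-space version.

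\emph{Gap 2: nets versus sequences.} Granting continuity of all cluster functions, you conclude: pointwise compact, contained in $C(K)$, pointwise-convergent sequences converge weakly, ``so via Eberlein--\v{S}mulian every sequence of $A$ has a weakly convergent subsequence.'' This skips the crucial step: pointwise compactness of $\cl[p]{A}$ yields pointwise convergent \emph{subnets} of a given sequence, not subsequences, and dominated convergence is useless along nets. You acknowledge the issue by appealing to ``the angelic behaviour of $C(K)$ in its pointwise topology,'' but that angelicity \emph{is} Grothendieck's theorem --- it is the core of the very statement being proved --- so as written the argument is circular exactly where the real work should happen. The self-contained fix: given $\{f_n\} \subseteq A$, factor through the compact metrizable image $K'$ of $K$ under $x \mapsto (f_n(x))_n$ (cluster functions factor through $K'$ too, and stay continuous because the factor map is a quotient map of compact Hausdorff spaces); pick a countable dense subset of $K'$ inside the image of $D$, diagonalize to get a subsequence converging there, and use that two continuous cluster functions agreeing on a dense set agree everywhere to upgrade to pointwise convergence on all of $K$. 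Only then do dominated convergence and Eberlein--\v{S}mulian finish the proof. With Gap~1 repaired and this extraction supplied in place of the appeal to angelicity (and the minor point that uniform boundedness in the ``only if'' direction must be derived from weak precompactness via the uniform boundedness principle rather than assumed), your outline becomes the standard proof.
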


Fix a formula $\varphi$ on $X \times Y$ as above.
For $x \in X$, $y \in Y$, define $\varphi_x\colon Y \rightarrow \bC$ and $\varphi^y\colon X \rightarrow \bC$ by
\begin{gather*}
  \varphi_x(y) = \varphi^y(x) = \varphi(x,y).
\end{gather*}
Define $\mcD_\varphi \subseteq C_b(X)$ (respectively, $\mcD^\varphi \subseteq C_b(Y)$) as the C$^*$-algebra generated by $\{ \varphi^y : y \in Y\}$ (respectively, $\{ \varphi_x : x \in X \}$), and let $\tS_\varphi$ (respectively, $\tS^\varphi$) denote its Gelfand space. Observe that as $\varphi$ is uniformly continuous, both maps $X \to \mcD^\varphi$, $x \mapsto \varphi_x$ and $Y \to \mcD_\varphi$, $y \mapsto \varphi^y$ are continuous. For each $p \in \tS_\varphi$, define the function $\varphi_p \in C_b(Y)$ by $\varphi_p(y) = \varphi^y(p)$. Thus $\tS_\varphi$ is the space of $\varphi$-types in $X$ over $Y$, while $\tS^\varphi$ is the space of $\varphi'$-types in $Y$ over $X$, where $\varphi'(y,x) = \varphi(x,y)$ is the transposed formula. As $\mcD_\varphi$ consists of continuous functions on $X$, we can consider $S_\varphi$ as a (not necessarily faithful) compactification of $X$ with a natural map $\theta \colon X \to \tS_\varphi$ with a dense image ($\theta(x)$ is just the $\varphi$-type of $x$ over $Y$ and realised types are dense). As an illustration of how one can work in this framework, we give (the almost tautological) proof of one of the basic results in stability theory: that for a stable formula $\varphi$, $\varphi$-types are definable.
\begin{prp}
  Assume $\varphi$ is stable.
  Then for every $p \in \tS_\varphi$, the function $\varphi_p$ agrees (on $Y$) with some $\psi \in \mcD^\varphi$, i.e., with some continuous combination of instances $\varphi_{x_i}$.
\end{prp}
\begin{proof}
  Let $p \in \tS_\varphi$, and let $x_k \in X$ be such that $\theta(x_k) \rightarrow p$ in $\tS_\varphi$.
  Let $A = \{\varphi_x : x \in X\} \subseteq C(\tS^\varphi) \subseteq C_b(Y)$.
  Using the fact that $\varphi$ is stable and \autoref{thm:GrothendieckCriterion}, we obtain that $A$ is weakly precompact in $C(\tS^\varphi)$.
  Using the Eberlein--\v{S}mulian Theorem, and possibly passing to a subsequence, we may assume that $\varphi_{x_k} \rightarrow \psi$ weakly for some $\psi \in C(\tS^\varphi) = \mcD^\varphi$. As weak convergence in $C(\tS^\varphi)$ implies pointwise convergence, we have that
\[
\varphi_p(y) = \varphi^y(p) = \lim_k \varphi^y(x_k) = \lim_k \varphi_{x_k}(y) = \psi(y)
\]
for all $y \in Y$.

  (When $X$ and $Y$ are both orbit closures (i.e., complete pure types), the action of $G$ is topologically transitive on both and \autoref{th:GrothendieckGroups} suffices for this argument.)
\end{proof}

In particular, $\varphi_p$ extends by continuity to a unique function which we can still denote by $\varphi_p \in C(\tS^\varphi)$, and by a symmetric argument we construct $\varphi^q \in C(\tS_\varphi)$ for $q \in \tS^\varphi$.
The limit exchange property tells us that $\varphi_p(q) = \varphi^q(p)$, which can be interpreted as the symmetry of independence.
This gives rise to a function $\varphi\colon \tS_\varphi \times \tS^\varphi \rightarrow \bC$ which is separately continuous but usually not jointly.

From here on, we shall fix $\xi \in M^\bN$ enumerating a dense subset and we identify $\widehat G_L$ with $\Xi = [\xi]$ as per \autoref{lem:DenseTuple}, so $\Xi$ acts on $M$.
Keeping in mind that formulas on $\Xi^2$ are the same as continuous functions on $\Xi^2 \dslash G = R(G)$, we obtain a means to calculate $W(G)$.
\begin{thm}
  \label{thm:WG}
  Let $\bM$ be $\aleph_0$-categorical and let $G = \Aut(\bM)$.
  Viewing $C\bigl( R(G) \bigr)$ as the algebra of formulas on $\Xi^2$, the subalgebra $C\bigl( W(G) \bigr)$ corresponding to the quotient map $R(G) \rightarrow W(G)$ consists exactly of the stable formulas.

  If $\bM$ is classical, let $B\bigl( R(G) \bigr) \subseteq C\bigl( R(G) \bigr)$ be the collection of continuous $\{0,1\}$-valued functions.
  Then $B\bigl( R(G) \bigr)$ (equipped with the proper operations) is the Boolean algebra of classical formulas on $\Xi^2$, it generates $C\bigl( R(G) \bigr)$, and $B\bigl( W(G) \big) = B\bigl( R(G) \bigr) \cap C\bigl( W(G) \bigr)$, the Boolean algebra of classical stable formulas, generates $C\bigl( W(G) \bigr)$.
\end{thm}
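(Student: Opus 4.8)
The plan is to read off the first assertion directly from \autoref{lem:StableWAP}, once the three incarnations of a formula are matched up, and then to obtain the classical refinement from the zero-dimensionality of the two compactifications. First I would fix the dictionary. A formula $\varphi$ on $\Xi^2$ is by definition a continuous function on $\Xi^2 \sslash G = R(G)$, i.e.\ an element of $C\bigl(R(G)\bigr)$. Restricting along the dense embedding $G \to R(G)$, $g \mapsto [1_G, g]$, and using the identification $1_G \leftrightarrow \xi$ from \autoref{lem:DenseTuple}, this formula pulls back to the function $g \mapsto \varphi(\xi, g \cdot \xi) = \tilde\varphi_{\xi, \xi}(g)$ on $G$. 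Since restriction to the dense subset $G$ identifies $C\bigl(R(G)\bigr)$ with $\RUCB(G)$ and $C\bigl(W(G)\bigr)$ with $\WAP(G)$, compatibly with the quotient map $R(G) \to W(G)$, the formula $\varphi$ lies in $C\bigl(W(G)\bigr)$ precisely when $\tilde\varphi_{\xi, \xi} \in \WAP(G)$. As $\Xi = [\xi]$ is a single orbit closure, the orbit-closure case of \autoref{lem:StableWAP} applies with $x_0 = y_0 = \xi$ and yields $\tilde\varphi_{\xi, \xi} \in \WAP(G)$ if and only if $\varphi$ is stable. This settles the first assertion.

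For the classical refinement I would first record that both compactifications are zero-dimensional. Since $\bM$ is classical, $M^I \sslash G$ is totally disconnected, and the $G$-equivariant closed inclusion $\Xi^2 \hookrightarrow (M^\bN)^2$ induces a continuous injection $R(G) = \Xi^2 \sslash G \to (M^\bN)^2 \sslash G$ (injective because $\Xi^2$ is closed and $G$-invariant, so orbit closures of its points stay inside it), hence a homeomorphism onto a closed subspace of a totally disconnected space; thus $R(G)$ is totally disconnected. Consequently the continuous $\{0,1\}$-valued functions — which are exactly the indicators of clopen $G$-invariant subsets of $\Xi^2$, i.e.\ the classical formulas, and which form a Boolean algebra under $f \wedge g = fg$, $f \vee g$, $\lnot f = 1-f$ — have complex span equal to the locally constant functions, which are dense in $C\bigl(R(G)\bigr)$ by Stone--Weierstrass. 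Hence $B\bigl(R(G)\bigr)$ generates $C\bigl(R(G)\bigr)$. On the other side $G$ is a Roelcke precompact closed subgroup of $S_\infty$, so $W(G)$ is zero-dimensional by \autoref{thm:ZeroDimensionalWAP}, and the identical argument shows that $B\bigl(W(G)\bigr)$ generates $C\bigl(W(G)\bigr)$.

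It then remains to identify $B\bigl(W(G)\bigr)$ with $B\bigl(R(G)\bigr) \cap C\bigl(W(G)\bigr)$. The pullback along $R(G) \to W(G)$ realizes $C\bigl(W(G)\bigr)$ as a C$^*$-subalgebra of $C\bigl(R(G)\bigr)$, and under it a function lies in $B\bigl(R(G)\bigr) \cap C\bigl(W(G)\bigr)$ exactly when it is $\{0,1\}$-valued and factors through $W(G)$, i.e.\ when it is the indicator of a clopen subset of $W(G)$, that is an element of $B\bigl(W(G)\bigr)$. Combining this with the first assertion, $B\bigl(R(G)\bigr) \cap C\bigl(W(G)\bigr)$ consists precisely of the formulas that are simultaneously classical ($\{0,1\}$-valued) and stable, as claimed. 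The only genuinely non-formal ingredients are the zero-dimensionality of $R(G)$ and $W(G)$; granting \autoref{thm:ZeroDimensionalWAP} and the total disconnectedness of $M^I \sslash G$ in the classical case, the rest is an unwinding of identifications. Accordingly, I expect the care to be concentrated in two bookkeeping points: verifying that the restriction map sends $\varphi$ to $\tilde\varphi_{\xi,\xi}$ with the correct argument placement, and checking that the generation-by-idempotents step is legitimate, i.e.\ that locally constant functions are dense, which is exactly where zero-dimensionality is used.
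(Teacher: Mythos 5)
Your proposal is correct and follows essentially the same route as the paper: the first assertion is read off from Lemma~\ref{lem:StableWAP} (orbit-closure case, with $x_0 = y_0 = \xi$), and the classical refinement comes from zero-dimensionality of $R(G)$ and of $W(G)$ (the latter via Theorem~\ref{thm:ZeroDimensionalWAP}) together with Stone--Weierstrass, exactly as in the paper. One cosmetic caveat: the restriction of $C\bigl(R(G)\bigr)$ to $G$ is the algebra of \emph{Roelcke} uniformly continuous bounded functions, not $\RUCB(G)$ (which in the paper's notation means \emph{right} uniformly continuous) as you write, but this naming slip plays no role in the argument.
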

\begin{proof}
  We have already seen in Lemma~\ref{lem:StableWAP} that the WAP functions are the stable formulas.
  When $\bM$ is classical, $R(G)$, being a space of types, is zero-dimensional, and its quotient $W(G)$ is zero-dimensional by \autoref{thm:ZeroDimensionalWAP}. On zero-dimensional compact spaces, $\set{0, 1}$-valued functions separate points and therefore, by the Stone--Weierstrass theorem, the algebra generated by them is dense in the algebra of all continuous functions.
\end{proof}

In a sense, formulas on $\Xi^2$ capture the entire logic, and in particular, suffice for testing for stability.
Indeed, if $\varphi$ is a formula on $X \times Y$ and $x \in X$, $y \in Y$, then $\overline \varphi_{x,y}(x',y') = \varphi(x' \cdot x, y' \cdot y)$ is a formula on $\Xi^2$, and $\varphi$ is stable if and only if $\overline \varphi_{x,y}$ is stable for all such $x,y$. Hence, we obtain the following.
\begin{thm}
  \label{th:stable-equiv-r-equal-w}
  Let $\bM$ be an $\aleph_0$-categorical structure and $G = \Aut(\bM)$. The following are equivalent:
  \begin{enumerate}
  \item the theory of $\bM$ is stable;
  \item every formula on $\Xi^2$ is stable;
  \item $R(G) = W(G)$.
  \end{enumerate}
\end{thm}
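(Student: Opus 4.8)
The plan is to run the equivalence as a short cycle, resting entirely on the two facts assembled just before the statement: the identification in \autoref{thm:WG} of $C\bigl(W(G)\bigr)$ with the algebra of \emph{stable} formulas sitting inside $C\bigl(R(G)\bigr)$, and the reduction, recorded in the paragraph preceding the theorem, of stability of an arbitrary formula to stability of the associated formulas $\overline\varphi_{x,y}$ on $\Xi^2$.

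I would begin with (ii) $\Leftrightarrow$ (iii). The natural map $R(G) \to W(G)$ is a continuous surjection of compact Hausdorff spaces, so it is a homeomorphism exactly when the induced inclusion $C\bigl(W(G)\bigr) \hookrightarrow C\bigl(R(G)\bigr)$ is onto. Reading $C\bigl(R(G)\bigr)$ as the algebra of all formulas on $\Xi^2$ and applying \autoref{thm:WG}, which pins down $C\bigl(W(G)\bigr)$ as precisely the subalgebra of stable formulas, this surjectivity says exactly that every formula on $\Xi^2$ is stable. Hence (iii) holds if and only if (ii) does, and this step carries essentially all the content, since \autoref{thm:WG} already packages the work.

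For (i) $\Rightarrow$ (ii) I would note that $\Xi = [\xi]$ is a closed $G$-invariant subset of $M^\bN$, so every formula on $\Xi^2$ is the restriction of a formula on $(M^\bN)^2$; the stability condition \eqref{eq:stability} is a statement about sequences and is therefore inherited under restriction to the subspace $\Xi \times \Xi$. Thus if the theory is stable then a fortiori every formula on $\Xi^2$ is stable. Conversely, for (ii) $\Rightarrow$ (i) I would invoke the reduction noted above: given any formula $\varphi$ on $X \times Y$ and $x \in X$, $y \in Y$, the function $\overline\varphi_{x,y}(x',y') = \varphi(x'\cdot x, y'\cdot y)$ is a formula on $\Xi^2$, and $\varphi$ is stable iff all these $\overline\varphi_{x,y}$ are; granting (ii) makes each $\overline\varphi_{x,y}$ stable, hence $\varphi$ stable, and as $\varphi$ ranges over all formulas this is stability of the theory.

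I do not expect a genuine obstacle, since the substantive arguments live in \autoref{lem:StableWAP} and \autoref{thm:WG}. The one point I would verify with care is the reduction used in (ii) $\Rightarrow$ (i): that $\overline\varphi_{x,y}$ is indeed a continuous $G$-invariant function on $\Xi^2$ (invariance follows from $G$-invariance of $\varphi$ together with the left action on $\Xi$), and that absence of the order property for $\varphi$ really is equivalent to it for the whole family $\{\overline\varphi_{x,y}\}_{x,y}$ --- which ultimately rests on \autoref{lem:StableWAP} together with the approximate homogeneity allowing arbitrary witnessing sequences in $X$ and $Y$ to be replaced by images of fixed tuples under elements of $\Xi$.
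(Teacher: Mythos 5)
Your proposal is correct and takes essentially the same route as the paper, which obtains the theorem immediately from \autoref{thm:WG} (identifying $C\bigl(W(G)\bigr)$ inside $C\bigl(R(G)\bigr)$ as exactly the stable formulas, via \autoref{lem:StableWAP}) together with the reduction that a formula $\varphi$ on $X \times Y$ is stable if and only if all the formulas $\overline\varphi_{x,y}$ on $\Xi^2$ are. Your fill-ins --- Gelfand/Stone--Weierstrass duality for (ii) $\Leftrightarrow$ (iii), restriction for (i) $\Rightarrow$ (ii), and the observation that the reduction rests on \autoref{lem:StableWAP} and approximate homogeneity --- are precisely what the paper leaves implicit.
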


Stability-theoretic independence admits many equivalent characterisations.
For our purposes, let us consider $x,y,z \in \Xi$, and for a stable formula $\varphi$, let $p_\varphi = \tp_\varphi(x/y) \in \tS_\varphi$, i.e.,
\[
\varphi(p_\varphi,u) = \varphi(x,yu) = \tilde \varphi( x^* yu ) \quad \text{for } u \in \Xi,
\]
and similarly, we let $q^\varphi = \tp_{\varphi'}(z/y) \in \tS^\varphi$, i.e., $\varphi(u,q^\varphi) = \varphi(yu, z)$.
We then say that $x \ind_y z$ if $\varphi(x,z) = \varphi(p_\varphi,q^\varphi)$ for every formula $\varphi$.

\begin{rmk*}
  Usual stability independence is stronger than what is defined above, and requires that $\tp_\varphi(x/y,z)$ be definable over $y$.
  In our vocabulary, this means that we consider stable formulas $\varphi$ on $\Xi \times \Xi^2$, express $\varphi_{p_\varphi}$ as $\psi_\xi$, and require that $\varphi(x,y,z) = \psi(y,y,z)$ for all such $\varphi$ (when $\varphi(x,y,z)$ only depends on $x,z$ we have $\psi(y,y,z) = \varphi(p_\varphi,q^\varphi)$, this is implicit in the proof of \autoref{prp:IndependenceLaw}).
  However, first, this weaker notion suffices to characterise the semi-group law in $W(G)$, and second, the triplet $x,y,z$ constructed in the proof below actually does satisfy the stronger form, so there is no real cheating here.
  Moreover, the two notions agree when $y$ can be expressed as either $x \cdot w$ or $z \cdot w$, which is the only case considered explicitly in the remarks following \autoref{prp:IndependenceLaw}.)
\end{rmk*}

\begin{prp}
  \label{prp:IndependenceLaw}
  Let $p,q \in R(G)$.
  Then
  \begin{enumerate}
  \item There are $x,y,z \in \Xi$ such that $p = x^* y$, $q = y^* z$, and $x \ind_y z$.
  \item Whenever $x \ind_y z$ we have $x^* z = (x^* y) \cdot (y^* z)$ in $W(G)$.
  \end{enumerate}
  Thus $\tp(x,y) \cdot \tp(y,z) = \tp(x,z)$ when $x \ind_y z$, and the semigroup $W(G)$ is an algebraic representation of stable independence in $\bM$.
\end{prp}
\begin{proof}
  For any $x,y,z$ as in the first item, the image of $x^* z$ in $W(G)$ depends only on $p,q$.
  It will therefore be enough to show there exist $x,y,z$ satisfying both items.
  For this, let $p = x_0^* y_0$ and $\{g_k\} \subseteq G$, $g_k \rightarrow q$.
  Possibly passing to a sub-sequence, we may assume that $[x_0,y_0,y_0 g_k]$ converges in $\Xi^3 \dslash G$ to some $[x,y,z]$ (we say that $\tp(z/x,y)$ is a \emph{co-heir} over $y$).
  Then $x^* y = p$, $y^* z = \lim y_0^* y_0 g_k = q$ and $p \cdot q = \lim x^* y g_k = \lim x_0^* y_0 g_k = x^* z$.
  Notice that for stable $\varphi$ and $u \in \Xi$ we have $\varphi(u,q^\varphi) = \varphi(yu,z) = \lim \varphi(u,g_k)$.
  As we saw, we can express $\varphi_{p_\varphi}$ as $\lambda\bigl[ \varphi_{u_i} \bigr]_{i \in \bN}$, where $\lambda\colon \bC^\bN \rightarrow \bC$ is continuous, which means that
  \begin{gather*}
    \varphi(p_\varphi,q^\varphi) = \lim_k \varphi(p_\varphi,g_k) = \lim \varphi(x,yg_k) = \varphi(x,z),
  \end{gather*}
  and we are done.
\end{proof}

While the presentation is entirely symmetric, it becomes convenient at some point to ``fix sides'' and think of $p = x^*y$ as $\tp(y/x)$ (rather than, say, $\tp(x/y)$).
Of course, $p$ does not determine $x$, so we may view $p$ as a type over any $u \in \Xi$ (which has the same type as $x$), i.e., over any elementary substructure $u(\bM) \preceq \bM$.
Then $u^* p = u^* x^* y$ corresponds to $\tp(y/xu)$, i.e., taking the product $u^* p$ (which also makes perfect sense in $R(G)$) corresponds to the restriction of the parameter set from $x$ to $xu$, or from $M$ to $u(M)$.
On the other hand, $up = u x^* y$ only makes sense in $W(G)$, and corresponds to the unique non-forking extension of a type from $xu$ to $x$, or from $u(M)$ to $M$.
A general product $(u^*v)(x^* y)$ in $W(G)$ consists therefore of taking a non-forking extension followed by a restriction.

We now turn to French school stability and the fundamental order.
We can define the \emph{fundamental class} of $p = x^* y \in W(G)$, denoted by $\beta(p)$, as the set of all stable formulas $\varphi$ which are ``almost represented'' in $\tp(y/x)$, i.e.,
\begin{gather*}
  \beta(p) = \big\{ \varphi \text{ stable} : \inf_{u \in \Xi} |\tilde \varphi( u^* p )| = 0 \big\}.
\end{gather*}
It is clear that $\beta(u^* p) \subseteq \beta(p)$ and standard considerations regarding heirs yield that $\beta(up) = \beta(p)$ for $u \in \Xi$.
Therefore, in particular, $\beta(qp) \subseteq \beta(p)$ for $p,q \in W(G)$.
Conversely, assume that $\beta(p) \supseteq \beta(q)$ and think of $p$ and $q$ as types over models $\bM_0$ and $\bM_1$, respectively.
By standard compactness arguments, one can embed both models into a bigger $\bM_2$ (e.g., an ultrapower of $\bM_0$) in such a manner that $p|^{M_2}$, the non-forking extension, also extends $q$.
In other words, $q$ is a restriction of a non-forking extension of $p$, and by our analysis of these operations, $q \in W p$.
Putting both together, we have $\beta(q) \subseteq \beta(p)$ if and only if $q \in Wp$ if and only if $Wq \subseteq Wp$, so one may identify $Wp$ with the fundamental class of $p$, and $\leq_L$ with the fundamental order of the theory of $\bM$.
With these identifications, for example \autoref{l:jcont-conseq}~\autoref{i:l:jc:1} is the fact that the fundamental order captures non-forking: a type and its extension have the same fundamental class if and only if the latter is the (unique) non-forking extension of the former.

We turn to idempotents in $W(G)$.
First, let $A \subseteq M\eq$ be some algebraically closed set, and consider two copies of $M$ which are independent over a common copy of $A$, i.e., $x,y \in \Xi$ such that $x(a) = y(a)$ for every $a \in A$ and $x \ind_{x(A)} y$.
Then, by our characterisation of the semigroup law in $W(G)$, $e_A = x^* y$ is idempotent.
Conversely, if $e = x^* y$ is idempotent, then $\mathrm{Cb}(y/x)$ can be shown to belong to $y(M\eq)$, and letting $A = x(M\eq) \cap y(M\eq)$ it follows that $e = e_A$.

An idempotent $e_A$ is central if and only if $A$ is invariant under automorphisms, but since $A$ may contain elements of various sorts, we cannot just deduce that it is $\emptyset$-definable.
However, we can relax the requirement $A = \acl\eq(A)$ to $\dcl\eq(A) = \acl\eq (A)$, in which case $A$ may even be taken to be a single countable tuple $a$.
Now, $e$ is central if and only if any $b \in [a] = \overline{G \cdot a}$ is interdefinable with $a$, so we can replace $A$ with the set $[a]$ which is definable.
For $p = x^* y$, we have $pe = e$ if and only if $ex = xe = ye = ey$, i.e., if and only if $x \equiv_A y$, and similarly, $g e = e$ if and only if $g$ fixes $A$ pointwise.
With this in mind, \autoref{l:K-dense-subgroup} says that if $x \equiv_A y$ then there is $g \in \Aut(\bM/A)$ such that $\tp(\xi,g \xi)$ is arbitrarily close to $\tp(x,y)$.
This is easily reduced to the following general model theoretic fact.
\begin{prp}
  Let $\bM$ be a stable structure (not necessarily $\aleph_0$-categorical), and let $A \subseteq \bM$ be $\emptyset$-definable.
  Let also $a,b \in \bM$ have the same type over $A$.
  Then there exists an elementary extension $\bM' \succeq \bM$ and $g \in \Aut(\bM')$ such that $ga = b$ and $g$ fixes pointwise $A$ \emph{as interpreted in $\bM'$}.
\end{prp}
The model-theoretic proof is a fairly standard elementary chain argument with the extra twist that $A$, being a definable set, grows at each induction step.
As usual for elementary chain arguments, in the topological realm this becomes a Baire category argument with the induction step more or less subsumed in \autoref{l:central-idemp}.

\section{Examples of WAP compactifications}
\label{sec:exampl-wap-comp}

If $G$ is a Roelcke precompact Polish group, one way to see whether $R(G) = W(G)$ is to check if the group operation on $G$ extends to a semigroup law on $R(G)$ (if this happens, the extension is unique). Of the examples we have considered so far, this is the case for $S_\infty$ (Example~\ref{ex:Sinfty}; the semigroup law is given by composition), $U(\mcH)$ (Example~\ref{ex:UH}; semigroup law again given by composition of operators), $\Aut(X, \mu)$ (Example~\ref{ex:AutMu}; semigroup law given by
\[
\nu_1 \nu_2 = \int_X (\nu_1)_x \times (\nu_2)^x \ud \mu(x),
\]
where $\nu_1, \nu_2$ are measures on $X \times X$ with marginals equal to $\mu$ and
\[
\nu_1 = \int_X \delta_x \times (\nu_1)_x \ud \mu(x) \quad \And \quad \nu_2 = \int_X  (\nu_2)^x \times \delta_x \ud \mu(x)
\]
are the corresponding decompositions). This reflects the fact that the theories of the corresponding structures are stable. In this section, we calculate the WAP compactification of some groups $G$ for which $W(G) \neq R(G)$. This gives information about the WAP quotients of those groups.

Calculating $W(\Aut(\bM))$ for the automorphism group of an $\aleph_0$-categorical structure $\bM$ amounts to understanding all stable formulas in $\bM$ and by virtue of Theorem~\ref{thm:WG}, if $\bM$ is a classical structure, we only need to consider classical formulas.

\begin{exm}
  \label{exm:StablePartRandomGraph}
  \textit{The random graph.} Let $(V,E)$ be the \emph{random graph} (the unique homogeneous, universal, countable graph), $E$ being the edge relation, and let $T = \Th(V,E)$.
  Let $p(\bar x), q(\bar y) \in \tS_n(T)$ be complete $n$-types, and $\varphi(\bar x,\bar y)$ a formula with two groups of $n$ variables.
  Then the following are equivalent:
  \begin{enumerate}
  \item \label{i:RG:1} There exists a quantifier-free formula $\psi(\bar x,\bar y)$ in which the symbol $E$ does not appear that agrees with $\varphi$ modulo $p(\bar x) \wedge p(\bar y)$;
  \item \label{i:RG:2} The formula $\varphi(\bar x, \bar y)$ is stable on $p(\bar x) \wedge p(\bar y)$.
  \end{enumerate}

  Let $G = \Aut(V,E)$. Then $G$ is Roelcke precompact (the action on $V$ is oligomorphic), and $W(G)$ consists of all isomorphisms between subgraphs of $(V, E)$.
\end{exm}
\begin{proof}
  Since equality is always stable, \ref{i:RG:1} $\Longrightarrow$ \ref{i:RG:2} holds.
  For \ref{i:RG:2} $\Longrightarrow$ \ref{i:RG:1}, it will be enough to show that on $p(\bar x) \wedge q(\bar y)$, if $\varphi$ is not entirely determined by the equality relations between the $x_i$ and $y_j$, then it is unstable on $p(\bar x) \wedge q(\bar y)$.
  For this, we may assume that $p(\bar x)$ requires all $x_i$ to be distinct and similarly for $q(\bar y)$.

  Say that a pair of tuples $\bar c,\bar d$ is obtained from another pair $\bar a,\bar b$ by a \emph{simple modification} if, up to isomorphism, one can be obtained from the other by adding or removing a single edge between some $(a_i,b_j)$.
  In this case, if in addition $\bar a,\bar b$ realise $p(\bar x) \wedge q(\bar y)$, then so do $\bar c,\bar d$, and conversely, given any two realisations of $p(\bar x) \wedge q(\bar y)$ such that $a_i = b_j \Longleftrightarrow c_i = d_j$, one can reach one from the other by a finite sequence of simple modifications.
  Therefore, all we need to show is that $\varphi$ is unstable as soon as there exist two realisations $\bar a,\bar b$ and $\bar c,\bar d$ of $p(\bar x) \wedge q(\bar y)$ which only differ by a simple modification, such that $\varphi(\bar a,\bar b) \wedge \neg \varphi(\bar c,\bar d)$ holds.
  Possibly replacing $\varphi$ with $\neg \varphi$, we may assume the only difference is in that $E(a_0,b_0) \wedge \neg E(c_0,d_0)$, noticing that this means that $a_0 \neq b_i$ and $b_0 \neq a_i$ for all $i$, and similarly for $\bar c,\bar d$.

  We can therefore construct a sequence $(\bar a^k,\bar b^k)_{k \in \bN}$ where $a_{>0}^k = a_{>0}$, $b_{>0}^k = b_{>0}$ for all $k$, $a_0^k$ and $b_0^k$ are all distinct from one another and from $a_{>0}$, $b_{>0}$, the quantifier-free type of $a_0^k,a_{>0},b_{>0}$ is the same as that of $a_0,a_{>0},b_{>0}$ (and therefore as $c_0,c_{>0},d_{>0}$), similarly with $b_0^k$ and $b_0$ ($d_0$) instead of $a_0^k$ and $a_0$ ($c_0$), and finally, $E(a_0^k,b_0^\ell)$ holds if and only if $k < \ell$.
  Since $T$ has quantifier elimination, we have $\varphi(\bar a^k,\bar b^\ell)$ if and only if $k < \ell$, so $\varphi$ has the order property on $p(\bar x) \wedge q(\bar y)$.

  The assertion regarding $G$ follows from the fact that $T$ is $\aleph_0$-categorical, the characterisation of stable formulas, and Theorem~\ref{thm:WG}.
\end{proof}

The next example is a continuation of Example~\ref{ex:Q}.
\begin{exm}
  \label{exm:StablePartDLOWE}
  Let $T = \Th(\Q,<)$. Let $p(\bar x), q(\bar y) \in \tS_n(T)$ be complete $n$-types, and $\varphi(\bar x,\bar y)$ a formula with two groups of $n$ variables.
  Then the following are equivalent:
  \begin{enumerate}
  \item There exists a quantifier-free formula $\psi(\bar x,\bar y)$ in which the symbol $<$ does not appear that agrees with $\varphi$ modulo $p(\bar x) \wedge q(\bar y)$;
  \item The formula $\varphi(\bar x)$ is stable on $p(\bar x) \wedge q(\bar y)$.
  \end{enumerate}

  $W(\Aut(\Q))$ consists of all isomorphisms between substructures of $\Q$.
\end{exm}
\begin{proof}
  We follow the same argument as above.
  Here, a simple modification of $\bar a,\bar b$ would consist of choosing some $a_i$ and $b_j$ which are adjacent in the order on $\bar a,\bar b$ such that $a_i \neq b_k$ and $a_k \neq b_j$ for all $k$, and inverting the relative order between them.
  The rest is identical.
\end{proof}

\begin{exm}
  \label{exm:StablePartAtomlessBooleanAlgebra}
  \textit{The Cantor space.} Let $\mcB = (B,0,1,\wedge,\vee,\neg)$ be the countable atomless Boolean algebra, and let $T = \Th(\mcB)$.
  Let $p(\bar x) \in \tS_m(T)$ and $q(\bar y) \in \tS_n(T)$ be complete types, and $\varphi(\bar x,\bar y)$ a formula with two groups of $m$ and $n$ variables, respectively.
  Then the following are equivalent:
  \begin{enumerate}
  \item There exists a formula $\psi(\bar x,\bar y)$ which is a Boolean combination of formulas $t(\bar x) = s(\bar y)$, where $t$ and $s$ are terms, and which agrees with $\varphi$ modulo $p(\bar x) \wedge q(\bar y)$;
  \item The formula $\varphi(\bar x)$ is stable on $p(\bar x) \wedge q(\bar y)$.
  \end{enumerate}

  Let $G =\Aut(\mcB)$. (By Stone duality, $G$ is the homeomorphism group of the space $X$ of ultrafilters on $\mcB$, i.e., $G$ is isomorphic to the homeomorphism group of the Cantor space.) Then $G$ is Roelcke precompact (the action $G \actson \mcB$ is oligomorphic) and $W(G)$ consists of all isomorphisms between Boolean subalgebras of $\mcB$. Dually, $W(G)$ is the semigroup of homeomorphisms between zero-dimensional factors of $X$. For a description of $R(\Aut(\mcB))$, see \cite{Uspenskii2001}.
\end{exm}
\begin{proof}
  Any formula of the form $t(\bar x) = s(\bar y)$ is clearly stable, so we prove the converse.
  Replacing a realisation of $p$ with the atoms of the algebra it generates, we may assume that $p(\bar x)$ and $q(\bar y)$ require that $\bar x$ and $\bar y$ be partitions of $1$, of lengths $m$ and $n$, respectively.
  Given a pair $\bar a,\bar b$ of such partitions, its type is determined by the collection of pairs $(i,j) \in m \times n$ such that $a_i \wedge b_j \neq 0$ (by quantifier elimination).

  Assume first that $n = m = 2$, that $a_i \wedge b_j = 0$ only for $i = j = 0$, and moreover, that if we invert the truth value of $a_0 \wedge b_0 = 0$, we also change the truth value of $\varphi(\bar a,\bar b)$.
  It is straightforward to construct a sequence $(\bar a^k,\bar b^k)_{k \in \bN}$ where each $\bar a^k$ and $\bar b^k$ is a partition of $1$ into two atoms, such that $a_i^k \wedge b_j^\ell = 0$ if and only if $i = j = 0$ and $k \leq \ell$, showing that $\varphi$ is unstable.

  Now Let $m,n \geq 2$, but assume that for $i,j \in \{0,1\}$ we have the same hypotheses as in the previous case, and moreover $a_0 \vee a_1 = b_0 \vee b_1$.
  Then this readily reduces to the previous case.

  Now drop the hypothesis that $a_0 \vee a_1 = b_0 \vee b_1$, keeping the others.
  Let $e = (a_0 \vee a_1) \wedge (b_0 \vee b_1)$.
  Replacing $a_0$, $a_1$, $b_0$ and $b_1$ with their respective intersections with $e$, adding the complements (if non-empty) to the list of atoms, and modifying $\varphi$ accordingly, we reduce to the previous case.

  For the general case, define a \emph{block} of $\bar a,\bar b$ as a pair $(I,J) \neq (\emptyset,\emptyset)$, where $I \subseteq m$, $J \subseteq n$ and $a_i \wedge b_j = 0$ for all $(i,j) \in \bigl[I \times (n \setminus J)\bigr] \cup \bigl[(m \setminus I) \times J\bigr]$.
  We shall say that $(i,j)$ \emph{belongs} to a block $(I,J)$ if $(i,j) \in I \times J$.
  The collection of minimal (with respect to inclusion) blocks $(I_k,J_k)$ gives rise to two respective partitions of $m$ and $n$, and determines the set of formulas $t(\bar x) = s(\bar y)$ satisfied by $\bar a,\bar b$.
  By a simple modification of $\bar a,\bar b$ we shall mean switching from $a_i \wedge b_j = 0$ to $a_i \wedge b_j \neq 0$ for a single pair $(i,j)$, which moreover belongs to some minimal block.
  Notice that this is not reversible (by a simple modification), and that it keeps the collection of blocks unchanged.

  If $\varphi(\bar x, \bar y)$ is not equivalent on $p(\bar x) \wedge q(\bar y)$ to a Boolean combination of formulas $t(\bar x) = s(\bar y)$, then there exist pairs $\bar a,\bar b$ and $\bar c,\bar d$ which have the same blocks, on which $\varphi$ differs.
  Applying to $\bar a,\bar b$ and to $\bar c,\bar d$ as many simple modifications as possible that do not change the truth value of $\varphi$, we may assume that any further simple modification must change it.
  Since $\bar a,\bar b$ and $\bar c,\bar d$ do not have the same type, a simple modification is possible for at least one of the two pairs.
  We conclude that there exists a pair $\bar a,\bar b$ such that a simple modification of the pair is possible, and any simple modification will invert the truth value of $\varphi(\bar a,\bar b)$.
  We claim that in this case there exists a minimal block $(I,J)$ and $i,i' \in I$, $j,j' \in J$, such that exactly one of the four possible intersections of $a_i,a_{i'}$ and $b_j, b_{j'}$ is empty.
  This will reduce to the previous case.

  Indeed, since a simple modification is possible, we may assume that $a_0 \wedge b_0 = 0$ and $(0,0)$ belong to some minimal block $(I,J)$.
  Let
  \begin{gather*}
    I_0 = \{ i \in I : a_i \wedge b_0 \neq 0\},
    \qquad
    J_0 = \{ j \in J : a_0 \wedge b_j \neq 0\},
  \end{gather*}
  noticing that $I_0,J_0 \neq \emptyset$.
  If there exist $(i,j) \in I_0 \times J_0$ such that $a_i \wedge b_j \neq 0$ then $0,i \in I$ and $0,j \in J$ are as desired.
  We may therefore assume that $a_i \wedge b_j = 0$ for all $(i,j) \in I_0 \times J_0$.

  Let $I_1, J_1$ be maximal such that $I_0 \subseteq I_1 \subseteq I$ and $J_0 \subseteq J_1 \subseteq J$ and $a_i \wedge b_j = 0$ for all $(i,j) \in I_1 \times J_1$, noticing that $0 \notin I_1 \cup J_1$.
  There must exist a pair $(i,j) \in (m \setminus I_1) \times (n \setminus J_1)$ such that $a_i \wedge b_j \neq 0$, since otherwise we could decompose $(I,J)$ into smaller blocks.
  Fix such a pair.
  Now there must exist $j' \in J_1$ such that $a_i \wedge b_{j'} \neq 0$, since otherwise we could add $i$ to $I_1$, contradicting maximality.
  Similarly, there exists $i' \in I_1$ such that $a_{i'} \wedge b_j \neq 0$.
  Then $i,i' \in I$ and $j,j' \in J$ are as desired, and we have proved our claim, concluding the proof.
\end{proof}

In all of the three examples above, it is easy to check from our description of the semigroup $W(G)$ that it has only two central idempotents, the identity and $0$, the empty isomorphism (or, the identity on the subalgebra $\set{0, 1}$ in the case of $\mcB$). Now Proposition~\ref{p:corr-quot} tells us that any homomorphism from $G$ to the isometry group of a reflexive Banach space has a closed image. (In fact, the group $\Homeo(2^\N)$ is totally minimal (Gamarnik~\cite{Gamarnik1991}), which is a stronger result, but in the other two cases, this seems to be new.)

Next, we consider the bounded Urysohn space $\bU_1$; let $G = \Iso(\bU_1)$. Recall from Example~\ref{ex:IsoU1} that the Roelcke compactification $R(G)$ can be identified with the space of bi-Kat\v{e}tov functions on $\bU_1 \times \bU_1$ bounded by $1$. The following calculation gives another proof of Pestov's result \cite{Pestov2007} that $W(G)$ is trivial, not using the fact that $\Iso(\bU_1)$ is a universal Polish group.
\begin{exm}
  \label{exm:StablePartUrysohn}
  \textit{The bounded Urysohn space.} Let $T = \Th(\bU_1)$. Note that, by homogeneity, $T$ eliminates quantifiers. Let $p(\bar x), q(\bar y) \in \tS_n(T)$ be complete $n$-types, and $\varphi(\bar x,\bar y)$ a formula with two groups of $n$ variables. Then the following are equivalent:
  \begin{enumerate}
  \item The formula $\varphi(\bar x)$ is constant on $p(\bar x) \wedge q(\bar y)$.
  \item The formula $\varphi(\bar x)$ is stable on $p(\bar x) \wedge q(\bar y)$.
  \end{enumerate}

  From this, we conclude that $W(G)$ is trivial.
\end{exm}
\begin{proof}
  If $X_1$ and $X_2$ are metric spaces, denote by $K_1(X_1, X_2)$ the space of all bi-Kat\v{e}tov functions on $X_1 \times X_2$ bounded by $1$.

  Let $p$ be a fixed type and let $X = \{a_0,\ldots a_{n-1}\}$ and $Y = \{b_0, \ldots, b_{n-1}\}$ be finite metric spaces whose enumerations realise $p$.
  Let $r_0 = \min \bigl\{ d(a_i,a_j), d(b_i,b_j) : i < j \bigr\}$, and for $f \in K_1(X,Y)$ let $r(f) = \min \{r_0, \min f\}$.
  Choose some pair $(i,j)$ such that $f(a_i,b_j) = \min f$ (say the first in the lexicographic ordering), and define $f'$ to agree with $f$ everywhere except for $f'(a_i,b_j) = \bigl( f(a_i,b_j) + r(f) \bigr) \wedge 1$.
  Since $r(f) \leq r_0$, a straightforward verification yields that $f' \in K_1(X,Y)$ as well; we will say that $f'$ is a \emph{simple increment} of $f$.

  By the discussion above and with quantifier elimination, we can identify the restriction of $\varphi$ to $p(\bar x) \wedge q(\bar y)$ with a (continuous) function $\overline \varphi\colon K_1(X,Y) \rightarrow \bR$, and we need to show that if $\overline \varphi$ is not constant then $\varphi$ is unstable.
  Since the constant function $1$ always belongs to $K_1(X,Y)$, we may assume that there exists $f \in K_1(X,Y)$ such that $\overline \varphi(f) \neq \overline \varphi(1)$.
  For any $\eps > 0$ we have $(f + \eps) \wedge 1 \in K_1(X,Y)$ as well, so by continuity of $\overline \varphi$ we may assume that $r(f) > 0$.
  Since $r(f') \geq r(f)$, applying a finite number of simple increments to $f$ we obtain the constant function $1$.
  We may therefore assume that there exists $f \in K_1(X,Y)$ such that $\overline \varphi(f) \neq \overline \varphi(f')$, and moreover that $(a_0,b_0)$ is the pair to which the simple increment applies.

  We now proceed as in Example~\ref{exm:StablePartRandomGraph}, letting $d(a_0^k,a_0^\ell) = d(b_0^k,b_0^\ell) = r(f)$, $d(a_0^k,b_0^\ell) = f(a_0,b_0)$ for $k < \ell$ and $d(a_0^k,b_0^\ell) = f'(a_0,b_0)$ otherwise.
  The triangle inequality holds, so this construction is legitimate, and the conclusion is as before.
\end{proof}

\begin{bibdiv}
\begin{biblist}

\bib{Ahlbrandt1986}{article}{
      author={Ahlbrandt, Gisela},
      author={Ziegler, Martin},
       title={Quasi-finitely axiomatizable totally categorical theories},
        date={1986},
        ISSN={0168-0072},
     journal={Ann. Pure Appl. Logic},
      volume={30},
      number={1},
       pages={63\ndash 82},
         url={http://dx.doi.org/10.1016/0168-0072(86)90037-0},
        note={Stability in model theory (Trento, 1984)},
}

\bib{BenYaacov2011b}{article}{
      author={{Ben Yaacov}, Ita{\"{\i}}},
      author={Berenstein, Alexander},
      author={Henson, C.~Ward},
       title={Model-theoretic independence in the {B}anach lattices
  {$L_p(\mu)$}},
        date={2011},
        ISSN={0021-2172},
     journal={Israel J. Math.},
      volume={183},
       pages={285\ndash 320},
         url={http://dx.doi.org/10.1007/s11856-011-0050-4},
}

\bib{BenYaacov2008}{incollection}{
      author={{Ben Yaacov}, Ita{\"{\i}}},
      author={Berenstein, Alexander},
      author={Henson, C.~Ward},
      author={Usvyatsov, Alexander},
       title={Model theory for metric structures},
        date={2008},
   booktitle={Model theory with applications to algebra and analysis. {V}ol.
  2},
      series={London Math. Soc. Lecture Note Ser.},
      volume={350},
   publisher={Cambridge Univ. Press},
     address={Cambridge},
       pages={315\ndash 427},
         url={http://dx.doi.org/10.1017/CBO9780511735219.011},
}

\bib{BenYaacov2013}{article}{
      author={{Ben Yaacov}, Ita{\"{\i}}},
      author={Berenstein, Alexander},
      author={Melleray, Julien},
       title={Polish topometric groups},
        date={2013},
        ISSN={0002-9947},
     journal={Trans. Amer. Math. Soc.},
      volume={365},
      number={7},
       pages={3877\ndash 3897},
         url={http://dx.doi.org/10.1090/S0002-9947-2013-05773-X},
}

\bib{Bartosova2013p}{unpublished}{
      author={Barto\v{s}ova, Dana},
      author={Kwiatkowska, Aleksandra},
       title={Lelek fan from a projective {F}ra\"{i}ss\'{e} limit},
        date={2013},
        note={Preprint \texttt{arXiv:1312.7514}, to appear in Fund. Math.},
}

\bib{BenYaacov2007a}{article}{
      author={{Ben Yaacov}, Ita{\"{\i}}},
      author={Usvyatsov, Alexander},
       title={On {$d$}-finiteness in continuous structures},
        date={2007},
        ISSN={0016-2736},
     journal={Fund. Math.},
      volume={194},
      number={1},
       pages={67\ndash 88},
         url={http://dx.doi.org/10.4064/fm194-1-4},
}

\bib{Conway1990}{book}{
      author={Conway, John~B.},
       title={A course in functional analysis},
     edition={Second},
      series={Graduate Texts in Mathematics},
   publisher={Springer-Verlag},
     address={New York},
        date={1990},
      volume={96},
        ISBN={0-387-97245-5},
}

\bib{Dikranjan2013}{incollection}{
      author={Dikranjan, Dikran},
      author={Megrelishvili, Michael},
       title={Minimality cconditions in topological groups},
        date={2013},
   booktitle={Recent progress in general topology {III}},
      editor={van Mill, Jan},
      editor={Hart, K.P.},
   publisher={Atlantis Press},
}

\bib{Gamarnik1991}{article}{
      author={Gamarnik, D.},
       title={Minimality of the group {${\rm Autohomeom}(C)$}},
        date={1991},
        ISSN={0204-4110},
     journal={Serdica},
      volume={17},
      number={4},
       pages={197\ndash 201},
}

\bib{Gaughan1967}{article}{
      author={Gaughan, Edward~D.},
       title={Topological group structures of infinite symmetric groups},
        date={1967},
        ISSN={0027-8424},
     journal={Proc. Nat. Acad. Sci. U.S.A.},
      volume={58},
       pages={907\ndash 910},
}

\bib{Glasner2012a}{article}{
      author={Glasner, Eli},
       title={The group {${\rm Aut}(\mu)$} is {R}oelcke precompact},
        date={2012},
        ISSN={0008-4395},
     journal={Canad. Math. Bull.},
      volume={55},
      number={2},
       pages={297\ndash 302},
         url={http://dx.doi.org/10.4153/CMB-2011-083-2},
}

\bib{Grothendieck1952}{article}{
      author={Grothendieck, A.},
       title={Crit\`eres de compacit\'e dans les espaces fonctionnels
  g\'en\'eraux},
        date={1952},
        ISSN={0002-9327},
     journal={Amer. J. Math.},
      volume={74},
       pages={168\ndash 186},
}

\bib{Glasner2008}{article}{
      author={Glasner, E.},
      author={Megrelishvili, M.},
       title={New algebras of functions on topological groups arising from
  {$G$}-spaces},
        date={2008},
        ISSN={0016-2736},
     journal={Fund. Math.},
      volume={201},
      number={1},
       pages={1\ndash 51},
         url={http://dx.doi.org/10.4064/fm201-1-1},
}

\bib{Hodges1993a}{article}{
      author={Hodges, Wilfrid},
      author={Hodkinson, Ian},
      author={Lascar, Daniel},
      author={Shelah, Saharon},
       title={The small index property for {$\omega$}-stable
  {$\omega$}-categorical structures and for the random graph},
        date={1993},
        ISSN={0024-6107},
     journal={J. London Math. Soc. (2)},
      volume={48},
      number={2},
       pages={204\ndash 218},
         url={http://dx.doi.org/10.1112/jlms/s2-48.2.204},
}

\bib{Hodges1993}{book}{
      author={Hodges, Wilfrid},
       title={Model theory},
      series={Encyclopedia of Mathematics and its Applications},
   publisher={Cambridge University Press},
     address={Cambridge},
        date={1993},
      volume={42},
        ISBN={0-521-30442-3},
}

\bib{Irwin2006}{article}{
      author={Irwin, Trevor},
      author={Solecki, S{\l}awomir},
       title={Projective {F}ra\"\i ss\'e limits and the pseudo-arc},
        date={2006},
        ISSN={0002-9947},
     journal={Trans. Amer. Math. Soc.},
      volume={358},
      number={7},
       pages={3077\ndash 3096 (electronic)},
}

\bib{Kechris2007a}{article}{
      author={Kechris, Alexander~S.},
      author={Rosendal, Christian},
       title={Turbulence, amalgamation and generic automorphisms of homogeneous
  structures},
        date={2007},
     journal={Proc. Lond. Math. Soc.},
      volume={94},
      number={2},
       pages={302\ndash 350},
}

\bib{Macpherson2011a}{article}{
      author={Macpherson, Dugald},
       title={A survey of homogeneous structures},
        date={2011},
        ISSN={0012-365X},
     journal={Discrete Math.},
      volume={311},
      number={15},
       pages={1599\ndash 1634},
         url={http://dx.doi.org/10.1016/j.disc.2011.01.024},
}

\bib{Malicki2015}{article}{
   author={Malicki, Maciej},
   title={Generic elements in isometry groups of Polish ultrametric spaces},
   journal={Israel J. Math.},
   volume={206},
   date={2015},
   number={1},
   pages={127--153},
   issn={0021-2172},
}

\bib{Megrelishvili2001}{article}{
      author={Megrelishvili, Michael~G.},
       title={Every semitopological semigroup compactification of the group
  {$H_+[0,1]$} is trivial},
        date={2001},
        ISSN={0037-1912},
     journal={Semigroup Forum},
      volume={63},
      number={3},
       pages={357\ndash 370},
}

\bib{Megrelishvili2001b}{incollection}{
      author={Megrelishvili, Michael~G.},
       title={Operator topologies and reflexive representability},
        date={2001},
   booktitle={Nuclear groups and {L}ie groups ({M}adrid, 1999)},
      series={Res. Exp. Math.},
      volume={24},
   publisher={Heldermann},
     address={Lemgo},
       pages={197\ndash 208},
}

\bib{Pestov2007}{article}{
      author={Pestov, Vladimir},
       title={The isometry group of the {U}rysohn space as a {L}e\'vy group},
        date={2007},
        ISSN={0166-8641},
     journal={Topology Appl.},
      volume={154},
      number={10},
       pages={2173\ndash 2184},
}

\bib{Roelcke1981}{book}{
      author={Roelcke, Walter},
      author={Dierolf, Susanne},
       title={Uniform structures on topological groups and their quotients},
   publisher={McGraw-Hill International Book Co.},
     address={New York},
        date={1981},
        ISBN={0-07-0543412-8},
        note={Advanced Book Program},
}

\bib{Rosendal2009}{article}{
      author={Rosendal, Christian},
       title={A topological version of the {B}ergman property},
        date={2009},
        ISSN={0933-7741},
     journal={Forum Math.},
      volume={21},
      number={2},
       pages={299\ndash 332},
         url={http://dx.doi.org/10.1515/FORUM.2009.014},
}

\bib{Rosendal2013}{article}{
      author={Rosendal, Christian},
      title={Global and local boundedness of {P}olish groups},
      journal={Indiana Univ. Math. J.},
      Year                     = {2013},
      Number                   = {5},
      Pages                    = {1621--1678},
      Volume                   = {62},
}

\bib{Ruppert1984}{book}{
      author={Ruppert, Wolfgang},
       title={Compact semitopological semigroups: an intrinsic theory},
      series={Lecture Notes in Mathematics},
   publisher={Springer-Verlag},
     address={Berlin},
        date={1984},
      volume={1079},
        ISBN={3-540-13387-9},
}

\bib{Ruppert1990}{article}{
      author={Ruppert, Wolfgang A.~F.},
       title={On group topologies and idempotents in weak almost periodic
  compactifications},
        date={1990},
        ISSN={0037-1912},
     journal={Semigroup Forum},
      volume={40},
      number={2},
       pages={227\ndash 237},
         url={http://dx.doi.org/10.1007/BF02573269},
}

\bib{Sabok2013p}{unpublished}{
      author={Sabok, Marcin},
       title={Automatic continuity for isometry groups},
        date={2013},
        note={Preprint \tt{arXiv:1312.5141}},
}

\bib{Stojanov1984}{article}{
      author={Stojanov, L.},
       title={Total minimality of the unitary groups},
        date={1984},
        ISSN={0025-5874},
     journal={Math. Z.},
      volume={187},
      number={2},
       pages={273\ndash 283},
         url={http://dx.doi.org/10.1007/BF01161710},
}

\bib{Tsankov2012}{article}{
      author={Tsankov, Todor},
       title={Unitary representations of oligomorphic groups},
        date={2012},
        ISSN={1016-443X},
     journal={Geom. Funct. Anal.},
      volume={22},
      number={2},
       pages={528\ndash 555},
         url={http://dx.doi.org/10.1007/s00039-012-0156-9},
}

\bib{Tsankov2013}{article}{
      author={Tsankov, Todor},
       title={Automatic continuity for the unitary group},
        date={2013},
        ISSN={0002-9939},
     journal={Proc. Amer. Math. Soc.},
      volume={141},
      number={10},
       pages={3673\ndash 3680},
         url={http://dx.doi.org/10.1090/S0002-9939-2013-11666-7},
}

\bib{Uspenskii2001}{article}{
      author={Uspenski{\u\i}, V.~V.},
       title={The {R}oelcke compactification of groups of homeomorphisms},
        date={2001},
        ISSN={0166-8641},
     journal={Topology Appl.},
      volume={111},
      number={1-2},
       pages={195\ndash 205},
         url={http://dx.doi.org/10.1016/S0166-8641(99)00185-6},
}

\bib{Uspenskii1990}{article}{
      author={Uspenski{\u\i}, Vladimir},
       title={On the group of isometries of the {U}rysohn universal metric
  space},
        date={1990},
        ISSN={0010-2628},
     journal={Comment. Math. Univ. Carolin.},
      volume={31},
      number={1},
       pages={181\ndash 182},
}

\bib{Uspenskii1998}{incollection}{
      author={Uspenski{\u\i}, Vladimir},
       title={The {R}oelcke compactification of unitary groups},
        date={1998},
   booktitle={Abelian groups, module theory, and topology ({P}adua, 1997)},
      series={Lecture Notes in Pure and Appl. Math.},
      volume={201},
   publisher={Dekker},
     address={New York},
       pages={411\ndash 419},
}

\bib{Uspenskii2002}{inproceedings}{
      author={Uspenski{\u\i}, Vladimir},
       title={Compactifications of topological groups},
        date={2002},
   booktitle={Proceedings of the ninth prague topological symposium (2001)},
   publisher={Topol. Atlas, North Bay, ON},
       pages={331\ndash 346 (electronic)},
}

\bib{Uspenskii2008}{article}{
      author={Uspenski{\u\i}, Vladimir},
       title={On subgroups of minimal topological groups},
        date={2008},
        ISSN={0166-8641},
     journal={Topology Appl.},
      volume={155},
      number={14},
       pages={1580\ndash 1606},
         url={http://dx.doi.org/10.1016/j.topol.2008.03.001},
}

\bib{Wagner1994}{incollection}{
      author={Wagner, Frank~O.},
       title={Relational structures and dimensions},
        date={1994},
   booktitle={Automorphisms of first-order structures},
      series={Oxford Sci. Publ.},
   publisher={Oxford Univ. Press},
     address={New York},
       pages={153\ndash 180},
}

\end{biblist}
\end{bibdiv}

\end{document}